\newtheorem{them}{Theorem}[section]
\newtheorem{lem}{Lemma}[section]
\newtheorem{cor}{Corollary}[section]
\theoremstyle{remark}
\newtheorem{rem}{Remark}[section]
\numberwithin{equation}{section}
\begin{document}
\title[On product affine hyperspheres in $\mathbb{R}^{n+1}$]
{On product affine hyperspheres in $\mathbb{R}^{n+1}$}

\author[X. Cheng, Z. Hu, M. Moruz and L. Vrancken]
{Xiuxiu Cheng, Zejun Hu, Marilena Moruz and Luc Vrancken}

\thanks{2010 {\it
Mathematics Subject Classification.} \ Primary 53A15; Secondary
53B25, 53C25.}

\thanks{The first two authors were supported by NSF of China, Grant Number
11771404. The third author is a postdoctoral fellow
of FWO-Flanders, Belgium.}


\keywords{Affine hypersurface, affine metric, affine hypersphere,
Levi-Civita connection, parallel Ricci tensor.}

\begin{abstract}
In this paper, we study locally strongly convex affine hyperspheres
in the unimodular affine space $\mathbb{R}^{n+1}$ which, as
Riemannian manifolds, are locally isometric to the Riemannian
product of two Riemannian manifolds both possessing constant
sectional curvatures. As the main result, a complete classification
of such affine hyperspheres is established. Moreover, as direct
consequences, affine hyperspheres of dimensions 3 and 4 with
parallel Ricci tensor are also classified.
\end{abstract}

\maketitle

\vskip 20pt
\section{Introduction}\label{sect:1}

In this paper, we study locally strongly convex affine hypersurfaces
in the unimodular affine space $\mathbb{R}^{n+1}$. It is well known that on a nondegenerate
affine hypersurface $M^n$ in $\mathbb{R}^{n+1}$ there exists a
canonical transversal vector field $\xi$ which is called the affine
normal vector field. If all the affine normal lines of $M^n$ pass
through a fixed point (resp. if all the affine normals are
parallel), $M^n$ is called a proper (resp. improper) affine
hypersphere. The second fundamental form $h$ associated with the
affine normal vector field is called the (Blaschke) affine metric. As we
consider only locally strongly convex affine hypersurfaces, the
affine metric $h$ is assumed to be positive definite, and in such
situation, the proper affine hyperspheres are divided into two
classes, i.e., the elliptic affine hyperspheres and the hyperbolic
ones.

The affine hyperspheres form a very important class of affine
hypersurfaces. From the global point of view that the affine metric
$h$ is complete, the improper (also called parabolic) affine hypersphere
has to be the elliptic paraboloid, whereas the elliptic affine
hypersphere has to be the ellipsoid. However, the class of locally
strongly convex hyperbolic affine hyperspheres is very large and
have been widely studied, see amongst others the works of
\cite{C,CY,G,Li1,Li2,Li3,Sa} and also the recent monograph \cite{LSZH},
or the survey paper \cite{Lo}. Indeed, even assuming global
conditions, the class of hyperbolic affine hyperspheres is
surprisingly large, and one is still far from having a complete
geometric understanding of them for all dimensions.

On the other hand, affine hyperspheres with constant sectional
curvature are classified in \cite{LP} and \cite{VLS} (see also
\cite{Si,V} for the general non-degenerate case), whereas in
\cite{HLV3} it was further shown that all locally strongly convex
Einstein affine hyperspheres in $\mathbb{R}^5$ are of constant
sectional curvature. Contrary to the result of \cite{HLV3}, the
cases for locally strongly convex Einstein affine hyperspheres in
$\mathbb{R}^{n+1}$ with $n\ge5$ are different, and there exist
Einstein affine hyperspheres which are not of constant sectional
curvatures; actually, such examples occur for the standard
embeddings of the noncompact symmetric spaces
$\mathrm{E}_{6(-26)}/\mathrm{F}_4$, and
$\mathrm{SL}(m,\mathbb{R})/\mathrm{SO}(m)$,
$\mathrm{SL}(m,\mathbb{C})/\mathrm{SU}(m)$,
$\mathrm{SU}^*(2m)/\mathrm{Sp}(m)$ for each $m\ge3$ (cf.
\cite{BD,HLV2} and \cite{CH,CHLL}). However, at present the complete
classification of locally strongly convex Einstein affine
hyperspheres in $\mathbb{R}^{n+1}$ is still an interesting and open
problem.

In order to get further knowledge of the affine hyperspheres, the
above mentioned facts motivate us to consider the following natural
and interesting problem:

\vskip 1mm

{\it Classify all locally strongly convex affine hyperspheres which
are locally isometric to the product $M_1^{n_1}(c_1)\times
M_2^{n_2}(c_2)$, such that $n_1+n_2=n$ and $M_i^{n_i}(c_i)$ is an
$n_i$-dimensional Riemannian manifold with constant sectional
curvature $c_i$ for $i=1,2$.}

\vskip 1mm

To consider this problem, we are sufficient to assume that $n\ge3$.
As the results of this paper, we have solved the above problem. More
precisely, we have proved the following theorems.

\begin{them}\label{thm:1.1}
Let $x:M^{n}\rightarrow\mathbb{R}^{n+1}$ be a locally strongly
convex affine hypersphere. If $(M^n,h)$ is locally isometric to the
Riemannian product $M_1^{n_1}(c_1)\times M_2^{n_2}(c_2)$ for
$n_1\ge2$ and $n_2\ge2$, such that $n_1+n_2=n$ and $M_i^{n_i}(c_i)$
is an $n_i$-dimensional Riemannian manifold with constant sectional
curvature $c_i$ for $i=1,2$. Then we have $c_1c_2=0$, and one of the
following cases occurs:
\begin{enumerate}
\item[(i)] $c_1=c_2=0$ and $x:M^{n}\rightarrow\mathbb{R}^{n+1}$ is locally affinely equivalent to either the 
paraboloid $x_{n+1}=\tfrac12[(x_1)^2+\cdots+(x_n)^2]$ or $Q(1,n):\
x_1x_2\cdots x_{n+1}=1$;

\item[(ii)]$c_1c_2=0$ and $c_1^2+c_2^2\neq0$, assuming that $c_1=0$ and $c_2\neq0$, then
$c_2<0$, $x:M^{n}\rightarrow\mathbb{R}^{n+1}$ is locally affinely
equivalent to the Calabi composition
$$
(x_1\cdots x_{n_1})^2(x_{n+1}^2-x_{n_1+1}^2-\cdots-x_{n}^2)^{n_2+1}=1,
$$
\end{enumerate}
where $(x_1,\ldots, x_{n+1})$ are the standard coordinates of
$\mathbb{R}^{n+1}$.
\end{them}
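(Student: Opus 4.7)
My strategy is to combine the standard affine structure equations with the parallel splitting $TM=D_1\oplus D_2$ provided by the Riemannian product $(M^n,h)=M_1^{n_1}(c_1)\times M_2^{n_2}(c_2)$. Write $\hat\nabla$ for the Levi-Civita connection of $h$, $K$ for the difference tensor (so that the cubic form $C(X,Y,Z)=h(K_XY,Z)$ is totally symmetric, each $K_X$ is $h$-self-adjoint, and $\operatorname{tr}K_X=0$) and $H$ for the constant affine mean curvature. The Gauss equation then reads
\[
\hat R(X,Y)Z = H\bigl(h(Y,Z)X-h(X,Z)Y\bigr)+[K_X,K_Y]Z,
\]
while $\hat\nabla K$ is totally symmetric (Codazzi). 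Since the Riemannian curvature of a product of space forms vanishes on mixed arguments and equals $c_i$ times the exchange operator on pure arguments from $D_i$, Gauss reduces to the commutator identities
\[
[K_{X_1},K_{Y_1}]Z=(c_1-H)\bigl(h(Y_1,Z)X_1-h(X_1,Z)Y_1\bigr),
\]
\[
[K_{X_1},K_{X_2}]Z=-H\bigl(h(X_2,Z)X_1-h(X_1,Z)X_2\bigr),
\]
for $X_i,Y_i\in D_i$, $Z\in TM$, together with the analogue for $D_2$. Tracing and using apolarity yields $\operatorname{tr}(K_{X_i}K_{Y_i})=[(n-1)H-(n_i-1)c_i]\,h(X_i,Y_i)$ and $\operatorname{tr}(K_{X_1}K_{Y_2})=0$, which already impose a strong block structure on $K$ with respect to $D_1\oplus D_2$.

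Next I would decompose $K$ into blocks $L^{ijk}\colon D_i\otimes D_j\to D_k$, using total symmetry of $C$ to pair them, and extract the algebraic consequences of the commutator identities. Since $D_1,D_2$ are $\hat\nabla$-parallel, the Codazzi equation decouples into block-by-block differential identities; combined with the constancy of $c_1,c_2$, the mixed blocks of $K$ collapse into a rigid normal form in which the cross action of $K$ essentially factors through a distinguished pair of vector fields $U_i\in D_i$ and a scalar function. This reduces $K$ to a normal form depending on only a few scalar parameters along $M$.

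The hardest step is proving $c_1c_2=0$. Assuming both are nonzero, the mixed commutator identity, evaluated on appropriate basis vectors, couples $K_{X_1}$ and $K_{X_2}$ through a rank-two exchange operator, while the two pure-factor trace identities over-determine their eigenvalue structure. A pointwise joint eigenvalue analysis on the invariant subspaces furnished by the normal form then produces an algebraic inconsistency (using crucially that $n_1,n_2\ge 2$), yielding $c_1c_2=0$. This eigenvalue-multiplicity bookkeeping is the technical heart of the proof.

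Once $c_1c_2=0$ is established, the two remaining cases are identified directly. If $c_1=c_2=0$, then $(M^n,h)$ is flat and the classification of locally strongly convex flat affine hyperspheres in \cite{LP,VLS} yields either the paraboloid (when $H=0$) or $Q(1,n)$ (when $H<0$). If $c_1=0$ and $c_2\neq 0$, the $D_2$-trace identity together with positive-definiteness forces $c_2<0$ and $H<0$; the normal form of $K$ then matches verbatim the cubic form of the Calabi composition of $Q(1,n_1-1)$ with a hyperboloid, and the standard uniqueness theorem for affine immersions determined by their structure equations identifies $x(M)$ with the stated explicit hypersurface.
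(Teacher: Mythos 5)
Your outline reproduces the correct global architecture (case split on $c_1c_2$, citation of \cite{VLS} for the flat case, identification of the remaining case with a Calabi composition), but the three steps you describe as the substance of the proof are asserted rather than argued, and two of them point in directions that do not work. First, the block structure of $K$ does not follow from the Gauss commutator identities and their traces, as you claim. The paper obtains $K_{X}Y=\mu(X)Y$ for $X\in D_1$, $Y\in D_2$ (Lemma 3.2) only after deriving a second-order identity, \eqref{eqn:2.9}, by covariantly differentiating the Codazzi equation and applying the Ricci identity (the ``Tsinghua principle''); the purely algebraic commutator relations $[K_{X_1},K_{X_2}]Z=H\bigl(h(X_2,Z)X_1-h(X_1,Z)X_2\bigr)$ are far too weak to force this. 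Second, your claim that $c_1c_2\neq0$ is excluded by a ``pointwise joint eigenvalue analysis'' producing an ``algebraic inconsistency'' is unsubstantiated: the paper's exclusion (Theorem 4.2) is genuinely differential --- it constructs a local extremal frame, uses Codazzi to prove $\hat\nabla_{X}X_1=0$ for all $X\in D_1$, and then contradicts $c_1\neq0$ via $\hat R(X_1,X_2)X_1=0$. You give no evidence that a pointwise obstruction exists, and none is exhibited in the paper.

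Third, and most seriously for the endgame: even a complete pointwise normal form of $K$ does not determine the immersion. To invoke a rigidity/classification result one needs $K$ as a tensor field, i.e.\ control of $\hat\nabla K$. The paper spends Lemmas 3.5--3.7 constructing a smooth canonical frame on $D_1$ by iterated Ejiri-type maximization of $f_1(u)=h(K_uu,u)$, extending the pointwise maximizers to local vector fields via the implicit function theorem together with a finiteness-and-continuity argument for the extremal eigenvalue, proving the frame is $\hat\nabla$-parallel, and only then concluding $\hat\nabla K=0$ so that Theorem 4.1 of \cite{HLSV} applies. Your sketch compresses all of this into ``the normal form matches verbatim the cubic form of the Calabi composition,'' which is a gap, not a proof. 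A minor but consequential slip: with the paper's conventions the Gauss equation carries $-[K_X,K_Y]Z$, and the resulting trace identity is $\operatorname{tr}(K_{X_i}K_{Y_i})=\bigl[(n_i-1)c_i-(n-1)H\bigr]h(X_i,Y_i)$; your sign-flipped version would reverse the inequality $\operatorname{tr}(K_{Y}^2)\ge0$ that feeds into $c_2<0$ and $H<0$.
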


\begin{them}\label{thm:1.2}
Let $x:M^n\rightarrow\mathbb{R}^{n+1}\ (n\ge3)$ be a locally
strongly convex affine hypersphere. If $(M^n,h)$ is locally
isometric to a Riemannian product $I\times\tilde M^{n-1}(c)$, with
$I\subset\mathbb R$ and $\tilde M^{n-1}(c)$ an $(n-1)$-dimensional
Riemannian manifold with constant sectional curvature $c\not=0$.
Then we have $c<0$, and $x:M^n\rightarrow\mathbb{R}^{n+1}$ is
locally affinely equivalent to the Calabi composition
\begin{equation*}
x_1^2(x_{n+1}^2-x_{2}^2-\cdots-x_{n}^2)^{n}=1,
\end{equation*}
where $(x_1,\ldots, x_{n+1})$ are the standard coordinates of
$\mathbb{R}^{n+1}$.
\end{them}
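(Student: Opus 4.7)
The plan is to exploit the de Rham splitting of $(M^n,h)$ together with the affine Gauss and Codazzi equations to pin down the difference tensor $K$ explicitly, and then recognize the resulting data as those of the Calabi composition of a point with the standard hyperbolic affine hypersphere modeling $\tilde M^{n-1}(c)$; the sign restriction $c<0$ will come out of the algebra, via the classification of constant-curvature affine hyperspheres from \cite{LP,VLS}.

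First, I would fix a unit vector field $E_1$ spanning the one-dimensional parallel distribution $D_1$ and set $D_2=E_1^\perp$, the parallel distribution tangent to the $\tilde M^{n-1}(c)$-factor. The standard apparatus for affine hyperspheres is then collected: symmetry of the difference tensor $K$ with totally symmetric cubic form $h(K(\cdot,\cdot),\cdot)$; apolarity $\mathrm{tr}\,K_X=0$; totally symmetric $\hat\nabla K$ (Codazzi); and a Gauss equation of the shape $\hat R(X,Y)Z=H[h(Y,Z)X-h(X,Z)Y]-[K_X,K_Y]Z$ with constant mean curvature $H$. Crucially, the product metric forces $\hat R(X,Y)Z=0$ on every triple not entirely contained in one factor, and on $D_2$-triples $\hat R$ restricts to $c[h(Y,Z)X-h(X,Z)Y]$.

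Second, I would feed $(E_1,Y,E_1)$ with $Y\in D_2$ into Gauss and couple it with the Codazzi equation applied to $(E_1,Y_1,Y_2)$ in order to kill the potential $D_2$-component of $K_{E_1}E_1$; thus $K_{E_1}E_1=\lambda E_1$ for a scalar function $\lambda$. Apolarity then delivers $K_{E_1}|_{D_2}=\mu\,\mathrm{Id}$ with $\mu=-\lambda/(n-1)$, while the cubic-form symmetry gives
\begin{equation*}
K(Y_1,Y_2)=\mu\,h(Y_1,Y_2)\,E_1+T(Y_1,Y_2),\qquad Y_1,Y_2\in D_2,
\end{equation*}
for a symmetric, trace-free tensor $T$ on $D_2$. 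Inserting this ansatz into Gauss with $(E_1,Y,E_1)$ expresses $H$ as an explicit multiple of $\mu^2$ (namely $H=-n\mu^2$), and projecting the $(Y_1,Y_2,Y_3)$-Gauss identity onto $D_2$ yields a reduced affine-hypersphere Gauss equation for $T$ on the leaves of $D_2$. Since those leaves carry constant sectional curvature $c\neq 0$, the classification from \cite{LP,VLS} forces $T\equiv 0$ together with the algebraic relation $c=-(n+1)\mu^2$, hence $c<0$. A short Codazzi argument in the $E_1$-direction then shows that $\mu$ is constant on $M^n$, and the affine invariants $(h,K)$ are therefore determined by this single constant.

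Third, these data match, up to unimodular affine equivalence, those of the Calabi composition of a point with the standard Lorentz-hyperboloid model $\{x_{n+1}^2-x_2^2-\cdots-x_n^2=1\}$ of $\tilde M^{n-1}(c)$; by the fundamental uniqueness theorem for nondegenerate affine hypersurfaces, $x$ is locally affinely equivalent to that composition, which a direct calculation writes as $x_1^2(x_{n+1}^2-x_2^2-\cdots-x_n^2)^n=1$. The main obstacle is the very first substantive step, namely enforcing $K_{E_1}E_1\in D_1$. In the setting of Theorem \ref{thm:1.1}, where both factors have dimension $\geq 2$, one can exploit two-dimensional directions inside each factor to make this invariance transparent; here, with $D_1$ one-dimensional, this shortcut is unavailable, and Codazzi in the mixed directions $(E_1,Y_1,Y_2)$ must be pushed harder in tandem with Gauss for $\hat R(E_1,Y)E_1=0$. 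Once that invariance is in hand, the remaining steps—apolarity, dimension reduction via $T$, application of the constant-curvature classification, and recognition of the Calabi composition—are essentially algebraic bookkeeping.
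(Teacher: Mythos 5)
There is a genuine gap at the step where you conclude $K_{E_1}|_{D_2}=\mu\,\mathrm{Id}$ with $\mu=-\lambda/(n-1)$: apolarity gives you only the trace condition $\lambda+\mathrm{tr}\big(K_{E_1}|_{D_2}\big)=0$, and the Gauss equation $\hat R(E_1,Y)E_1=0$ gives only that $K_{E_1}|_{D_2}$ satisfies $t^2-\lambda t+H=0$, i.e.\ it has \emph{at most two} eigenvalues with unknown multiplicities. Neither ingredient forces a single eigenvalue. Likewise, ordinary Codazzi applied to $(E_1,Y_1,Y_2)$ is a differential relation, and by itself it does not annihilate the $D_2$-component of $K_{E_1}E_1$. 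What the paper actually uses here is the Tsinghua identity \eqref{eqn:2.9}, obtained from a second covariant derivative of Codazzi combined with the Ricci identity. Plugging $W=Z=Y_\beta$, $X=E_1$, $Y=Y_\alpha$ into \eqref{eqn:2.9} and using \eqref{eqn:2.10} produces, at one stroke, $h(K_{E_1}Y_\alpha,Y_\beta)=0$ for $\alpha\neq\beta$, $h(K_{E_1}Y_\alpha,Y_\alpha)=h(K_{E_1}Y_\beta,Y_\beta)$, and $h(K_{E_1}E_1,Y_\alpha)=0$; this is precisely where the hypothesis $c\neq0$ enters, since the identity is multiplied through by $c$. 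Without invoking this (or the equivalent argument with the maximizing-vector construction), your chain breaks at the very first algebraic step.

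Your third step also deviates from the paper in a way that creates further work you have not addressed. You propose to project the $(Y_1,Y_2,Y_3)$-Gauss identity onto $D_2$ and invoke the constant-curvature classification of \cite{LP,VLS} to force $T\equiv0$. This is an appealing reduction, but \cite{LP,VLS} classify actual immersed affine hyperspheres; to use them you would need to check that the triple $(h|_{D_2},T,\tilde H)$ on a leaf of $D_2$ genuinely satisfies all the structure equations of an affine hypersphere (in particular a Codazzi equation for $T$ relative to the leaf's Levi-Civita connection and apolarity of $T$) and then appeal to an existence/realization theorem before the classification can be applied. The paper avoids this subtlety altogether: in the analogue of Lemma~\ref{lm:3.4} it supposes $T\neq0$, selects a maximizer $\bar Y_1$ of $f_2$, computes eigenvalues of $K_{\bar Y_1}|_{D_2}$, and then shows via Codazzi that $\hat\nabla\bar Y_1=0$, which yields $c=0$, a contradiction. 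Finally, once $\hat\nabla K=0$ is established, the paper concludes by applying Theorem~4.1 of \cite{HLSV}, which is a structure theorem producing the Calabi-product form directly; your appeal to the fundamental uniqueness theorem is a valid alternative once $(h,K)$ is fully determined, but you still need to exhibit a model affine hypersphere realizing that data, so the two routes are of comparable cost at that stage.
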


As direct consequences of these theorems, we further have the
following results.

\begin{cor}\label{cor:1.1}
Let $x:M^3\rightarrow\mathbb{R}^{4}$ be a locally strongly convex
affine hypersphere with parallel Ricci tensor. Then either $M^3$ is
an open part of a locally strongly convex hyperquadric, or
$x:M^3\rightarrow\mathbb{R}^{4}$ is locally affinely equivalent to
one of the following two hypersurfaces:
\begin{enumerate}
\item[(i)] $x_1x_2x_3x_4=1$,

\vskip 1mm

\item[(ii)] $x_1^2(x_4^2-x_2^2-x_3^2)^3=1$,
\end{enumerate}
where $(x_1,x_2,x_3, x_4)$ are the standard coordinates of
$\mathbb{R}^4$.
\end{cor}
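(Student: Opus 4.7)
The plan is to reduce Corollary~\ref{cor:1.1} to Theorem~\ref{thm:1.2} together with the classification of affine hyperspheres of constant sectional curvature due to \cite{LP,VLS}. The key point is that in dimension three the condition $\nabla\mathrm{Ric}=0$ is very rigid.

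First, I would analyze the consequences of parallel Ricci tensor on $(M^3,h)$. Regarded as a parallel symmetric $(1,1)$-tensor, the Ricci operator has locally constant eigenvalues, and each eigendistribution $E_\lambda$ is smooth, integrable and parallel: indeed $0=(\nabla_X\mathrm{Ric})(v)=\mathrm{Ric}(\nabla_X v)-\lambda\nabla_X v$ for $v\in E_\lambda$. The local de~Rham decomposition theorem therefore realizes $(M^3,h)$ as a Riemannian product of totally geodesic factors indexed by the distinct eigenvalues, with dimensions equal to the multiplicities. In dimension three the only possibilities are:
\emph{(a)} a single eigenvalue of multiplicity three, so $(M^3,h)$ is Einstein and hence of constant sectional curvature;
\emph{(b)} two distinct eigenvalues of multiplicities one and two, so $(M^3,h)$ splits locally as $I\times N^2$ with $I\subset\mathbb{R}$ an interval and $N^2$ a two-dimensional Einstein manifold, hence of constant Gauss curvature $c$;
\emph{(c)} three distinct simple eigenvalues, which would force $(M^3,h)$ to be locally $I_1\times I_2\times I_3$ and therefore flat, contradicting the distinctness of the eigenvalues; this case cannot occur.

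In case~\emph{(a)}, the classification \cite{LP,VLS} of locally strongly convex affine hyperspheres of constant sectional curvature shows that $x$ is locally affinely equivalent either to a locally strongly convex hyperquadric or to the flat hypersurface $x_1x_2x_3x_4=1$ (which is $Q(1,3)$ in Theorem~\ref{thm:1.1}(i)).

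In case~\emph{(b)}, if $c=0$ then $(M^3,h)$ is flat and one is reduced to case~\emph{(a)}; if $c\neq 0$, Theorem~\ref{thm:1.2} with $n=3$ applies verbatim, yielding $c<0$ and $x$ locally affinely equivalent to the Calabi composition $x_1^2(x_4^2-x_2^2-x_3^2)^3=1$. Combining the three cases produces exactly the alternatives stated in Corollary~\ref{cor:1.1}. The only non-routine step is the de~Rham reduction at the beginning; everything else is a direct invocation of Theorem~\ref{thm:1.2} and the classical classification \cite{LP,VLS}.
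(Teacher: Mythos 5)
Your proposal is correct and follows essentially the same route as the paper: reduce via a de~Rham-type splitting induced by the parallel Ricci tensor, then invoke the constant-sectional-curvature classification \cite{LP,VLS} for the Einstein case and Theorem~\ref{thm:1.2} for the $I\times\tilde M^2(c)$ case. The paper simply cites the de~Rham--Wu decomposition theorem \cite{W} to get that $(M^3,h)$ is locally a product of Einstein manifolds, whereas you derive the same splitting more explicitly by working with the parallel eigendistributions of the Ricci operator and ruling out the three-simple-eigenvalue case by hand; this is a small amount of extra detail but not a different argument.
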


\begin{cor}\label{cor:1.2}
Let $x:M^4\rightarrow\mathbb{R}^{5}$ be a locally strongly convex
affine hypersphere with parallel Ricci tensor. Then either $M^4$ is
an open part of a locally strongly convex hyperquadric, or
$x:M^4\rightarrow\mathbb{R}^{5}$ is locally affinely equivalent to
one of the following hypersurfaces:
\begin{enumerate}
\item[(i)] $x_1x_2x_3x_4x_5=1$,

\vskip 1mm

\item[(ii)] $(x_1x_2)^2(x_5^2-x_3^2-x_4^2)^3=1$,

\vskip 1mm

\item[(ii)] $x_1^2(x_5^2-x_2^2-x_3^2-x_4^2)^4=1$,
\end{enumerate}
where $(x_1,x_2,x_3, x_4, x_5)$ are the standard coordinates of
$\mathbb{R}^5$.
\end{cor}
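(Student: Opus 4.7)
The plan is to combine the parallel Ricci hypothesis with the local de Rham decomposition theorem in order to reduce Corollary~\ref{cor:1.2} to the Main Theorems above together with the Einstein result in $\mathbb{R}^5$ cited from \cite{HLV3}. First I would invoke the fact that a Riemannian manifold whose Ricci tensor is parallel splits locally as a Riemannian product of Einstein factors. Applied to $(M^4,h)$, this yields an isometric decomposition into Einstein factors whose dimensions form a partition of $4$.

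Next I would enumerate the possible partitions: $(4)$, $(3,1)$, $(2,2)$, $(2,1,1)$ and $(1,1,1,1)$. In the irreducible case $(4)$, $M^4$ is an Einstein affine hypersphere in $\mathbb{R}^5$, so by the result cited from \cite{HLV3} it has constant sectional curvature; the classification of affine hyperspheres of constant sectional curvature (\cite{LP, VLS}) then identifies $M^4$ with an open part of a hyperquadric, which furnishes the first alternative of the corollary.

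For the remaining partitions I would absorb every $1$-dimensional (hence flat) factor into a $2$-dimensional flat piece $M^2(0)$, using that the Riemannian product of two flat lines is $M^2(0)$. Thus $(1,1,1,1)$ becomes $M^2(0)\times M^2(0)$, $(2,1,1)$ becomes $M^2(c)\times M^2(0)$, and $(3,1)$ with the $3$-dimensional factor flat becomes again a $(2,2)$ flat product. In all of these subcases Theorem~\ref{thm:1.1} with $n_1=n_2=2$ applies, forcing $c_1c_2=0$ and producing either the paraboloid (a hyperquadric), the hypersurface~(i) $x_1x_2x_3x_4x_5=1$, or the hypersurface~(ii) $(x_1x_2)^2(x_5^2-x_3^2-x_4^2)^3=1$. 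The only remaining partition is $(3,1)$ with the $3$-dimensional factor of nonzero constant curvature $c$; Theorem~\ref{thm:1.2} then forces $c<0$ and yields the hypersurface~(iii) $x_1^2(x_5^2-x_2^2-x_3^2-x_4^2)^4=1$.

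The main obstacle I anticipate is the careful local application of the de Rham theorem: one must confirm that the splitting really holds locally without any global topological assumptions and that the resulting irreducible Einstein factors indeed realise one of the five partitions above. A secondary subtlety is to check that the regrouping of several $1$-dimensional factors into a flat $M^2(0)$ is legitimate for the purposes of Theorem~\ref{thm:1.1}, whose hypotheses require only constant sectional curvature of each factor, not irreducibility; once this is verified, the case analysis exhausts the three Calabi-type hypersurfaces listed in the corollary, all other configurations collapsing into the hyperquadric alternative.
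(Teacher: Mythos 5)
Your proposal is correct and follows essentially the same route as the paper: invoke the local de Rham--Wu decomposition into Einstein (hence constant-curvature) factors, reduce the irreducible case via \cite{HLV3} and \cite{VLS} to the hyperquadric, and handle the reducible cases by Theorems \ref{thm:1.1} and \ref{thm:1.2}. The paper states the three resulting configurations (Einstein, $\mathbb{R}\times\tilde M^3$, $M_1^2\times M_2^2$) without spelling out the absorption of $1$-dimensional factors into a flat $M^2(0)$; your explicit enumeration of partitions of $4$ and the regrouping argument simply makes that step transparent.
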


\begin{rem}\label{rem:1.1}
The above corollaries and the main results of \cite{DV1} and
\cite{DVY} imply that for locally strongly convex affine
hyperspheres in both $\mathbb{R}^4$ and $\mathbb{R}^5$, the parallelism
of the intrinsic invariant {\it Ricci tensor} and that of the
extrinsic invariant {\it cubic form} are actually equivalent.
\end{rem}

The paper is arranged as follows: In section 2, we fix notations and
briefly recall the local theory of equiaffine hypersurfaces. In
section 3, the most technical parts of this paper are given and we
prove the crucial lemmas which imply the existence of canonical
local frame so that the difference tensor can be sufficiently
determined. Finally, in section 4 we complete the proof of the
preceding theorems and corollaries.

\section{Preliminaries}\label{sect:2}

In this section, we briefly recall the local theory of equiaffine
hypersurfaces. For more details, we refer to the monographs \cite{LSZH, NS94}.

Let $\mathbb{R}^{n+1}$ be the standard $(n+1)$-dimensional real unimodular
affine space that is equipped with its usual flat connection $D$ and a
parallel volume form given by the determinant. Let $x:M^n\rightarrow
\mathbb{R}^{n+1}$ be a locally strongly convex hypersurface with
affine normal $\xi$. Then, for any vector fields $X$ and $Y$ on
$M^n$, we have
\begin{align}
&D_Xx_*(Y)=x_*(\nabla_XY)+h(X,Y)\xi,\label{eqn:2.1}\\[1mm]
&D_X\xi=-x_*(SX),\label{eqn:2.2}
\end{align}
where $\nabla, S$ and $h$ are the induced affine connection, the
affine shape operator and the affine metric, respectively. It is
well known that $M^n$ is an affine hypersphere if and only if
$S=H\,{\rm id}$ with $H$ being a constant; moreover,
$x:M^n\rightarrow \mathbb{R}^{n+1}$ is a proper (resp. improper)
affine hypersphere if and only if $H\neq 0$ (resp. $H=0$).

Let $\hat{\nabla}$ denote the Levi-Civita connection of the affine
metric $h$. The difference tensor $K$ is defined by
$K(X,Y):=K_XY:=\nabla_XY-\hat\nabla_XY$; it is symmetric as both
connections are torsion free. Moreover, $h(K(X,Y),Z)$ is a totally
symmetric cubic form. For affine hyperspheres with affine shape
operator $S=H \,{\rm id}$, the Riemannian curvature tensor $\hat{R}$
of the affine metric and the difference tensor $K$ satisfy the
following fundamental equations of Gauss and Codazzi:
\begin{equation}\label{eqn:2.3}
\hat{R}(X,Y)Z=H\big[h(Y,Z)X-h(X,Z)Y\big]-[K_X,K_Y]Z,
\end{equation}
\vskip -5mm
\begin{equation}\label{eqn:2.4}
(\hat{\nabla}_XK)(Y,Z)=(\hat{\nabla}_YK)(X,Z).
\end{equation}

As usual, we denote
$(\hat{\nabla}K)(Z,X,Y):=(\hat{\nabla}_ZK)(X,Y)$, and define the
second covariant differentiation $\hat{\nabla}^2K$ of $K$ by
\begin{equation}\label{eqn:2.5}
\begin{aligned}
(\hat{\nabla}^2K)(W,Z,X,Y):=&\hat{\nabla}_W((\hat{\nabla}K)(Z,X,Y))-(\hat{\nabla}K)(\hat{\nabla}_WZ,X,Y)
\\
&-(\hat{\nabla}K)(Z,\hat{\nabla}_WX,Y)-(\hat{\nabla}K)(Z,X,\hat{\nabla}_WY).
\end{aligned}
\end{equation}
Then we have the following Ricci identity:
\begin{align}\label{eqn:2.6}
\begin{split}
&(\hat{\nabla}^2K)(W,X,Y,Z)-(\hat{\nabla}^2K)(X,W,Y,Z)\\
&=
\hat{R}(W,X)K(Y,Z)-K(\hat{R}(W,X)Y,Z)-K(Y,\hat{R}(W,X)Z).
\end{split}
\end{align}

Moreover, for unimodular affine hypersurfaces of $\mathbb{R}^{n+1}$, $K$ satisfies the so-called apolarity condition
\begin{equation}\label{eqn:2.7}
{\rm trace}\,K_X=0,\ \ \forall X\in TM.
\end{equation}

In the following, we will prove an additional relation that is very
useful in our computations. To do so, we will make use of the
technique introduced in \cite{ALVW}, as the {\it Tsinghua
Principle}. First, take the covariant derivative of \eqref{eqn:2.4} with
respect to $W$, and use \eqref{eqn:2.4} and \eqref{eqn:2.5}, to obtain
straightforwardly that
\begin{equation}\label{eqn:2.8}
(\hat{\nabla}^2K)(W,X,Y,Z)-(\hat{\nabla}^2K)(W,Y,X,Z)=0.
\end{equation}

Then we sum over cyclic permutations of the first three
vector fields in the above equation and use the Ricci identity \eqref{eqn:2.6}. It follows
that
\begin{align}\label{eqn:2.9}
\begin{split}
0=&\hat{R}(W,X)K(Y,Z)-K(\hat{R}(W,X)Z,Y)+\hat{R}(X,Y)K(W,Z)\\
&-K(\hat{R}(X,Y)Z,W)+\hat{R}(Y,W)K(X,Z)-K(\hat{R}(Y,W)Z,X).
\end{split}
\end{align}

Additionally, if $(M^n,h)=M_1^{n_1}(c_1)\times M_2^{n_2}(c_2)$ and
applying Corollary 58 on page 89 in \cite{O}, we know that
\begin{align}\label{eqn:2.10}
\begin{split}
\hat{R}(X,Y)Z=&c_1\big[h(Y_1,Z_1)X_1-h(X_1,Z_1)Y_1\big]\\
&+c_2\big[h(Y_2,Z_2)X_2-h(X_2,Z_2)Y_2\big],
\end{split}
\end{align}
where, for $p\in M^n$ and $i=1,2$, $X_i,Y_i,Z_i$ are the
$T_pM_i^{n_i}$-component of $X,Y,Z\in T_pM^n$, respectively.

\section{Lemmas on the Calculations of the
Difference Tensor}\label{sect:3}

In this section, we consider the $n$-dimensional locally strongly
convex affine hypersphere $x:M^{n}\rightarrow\mathbb{R}^{n+1}$, such
that $(M^n,h)$ is locally isometric to a Riemannian product
$M_1^{n_1}(c_1)\times M_2^{n_2}(c_2)$ for $n_1\ge2$ and $n_2\ge2$,
$n_1+n_2=n$. Here, for $i=1,2$, $M_i^{n_i}(c_i)$ denotes an
$n_i$-dimensional Riemannian manifold with constant sectional
curvature $c_i$. We first assume that $c_1^2+c_2^2\neq0$ in this
section.

Now, we would emphasize that when we dealing with the product
manifold $M_1^{n_1}\times M_2^{n_2}$, one should be aware that {\it
throughout the paper} we will work with tangent vectors on $M^n$
denoted by $X$ and $Y$. In general, the $X$ notation (as well as
$X_i$, $1\le i\le n_1$) will denote a tangent vector at
$p=(p_1,p_2)\in M^n$, with zero component on $M_2^{n_2}$. Notice
that, a priori, it means that $X$ depends on $p_2$ as well, not only
on $p_1$. A corresponding meaning is given to $Y$ (or $Y_j$, $1\le
j\le n_2$), having zero components on $M_1^{n_1}$ and depending a
priory on both $p_1$ and $p_2$. One should have in mind this meaning
when reading $X\in T_pM_1^{n_1}$, respectively, $Y\in
T_pM_2^{n_2}$.
Nonetheless, a complete understanding will be acquired with the
proofs of Lemmas \ref{lm:3.6} and \ref{lm:3.7}. 

\vskip 2mm

We begin with the following result.

\begin{lem}\label{lm:3.1}
If $c_1^2+c_2^2\neq0$, then the difference tensor $K$ vanishes
nowhere.
\end{lem}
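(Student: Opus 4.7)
The plan is to argue by contradiction: suppose $K_p = 0$ at some point $p\in M^n$. Then the Gauss equation \eqref{eqn:2.3} at $p$ collapses to
$$\hat{R}(X,Y)Z = H\bigl[h(Y,Z)X - h(X,Z)Y\bigr],$$
so $(M^n,h)$ has constant sectional curvature $H$ at $p$. I would then match this against the product formula \eqref{eqn:2.10} for $\hat R$ and extract contradictory conditions on $c_1,c_2$.

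First, exploit the \emph{mixed} sectional curvatures. Pick nonzero $X_1\in T_pM_1^{n_1}$ and $Y_2\in T_pM_2^{n_2}$, which is possible since $n_1,n_2\ge 2$. Evaluating $\hat R(X_1,Y_2)X_1$ via \eqref{eqn:2.10}, every term carries a factor of the form $h(\cdot,0)$ or a projection onto the opposite factor, so the expression vanishes identically. On the other hand, the above Gauss relation gives
$$\hat R(X_1,Y_2)X_1 = -H\,h(X_1,X_1)\,Y_2,$$
using $h(X_1,Y_2)=0$ from the product splitting. Since $h$ is positive definite, this forces $H=0$.

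Next, restrict everything to the first factor. Take $h$-orthonormal vectors $X,Y\in T_pM_1^{n_1}$ (available because $n_1\ge 2$). With $H=0$ and $K_p=0$, the Gauss equation gives $\hat R(X,Y)Y = 0$, whereas \eqref{eqn:2.10} gives $\hat R(X,Y)Y = c_1 X$. Hence $c_1=0$. The symmetric computation on $T_pM_2^{n_2}$ yields $c_2=0$, contradicting the standing assumption $c_1^2+c_2^2\neq 0$.

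There is no real obstacle here: the whole argument is a pointwise comparison of \eqref{eqn:2.3} (at a zero of $K$) with \eqref{eqn:2.10}, using only that a product of two constant-curvature factors of dimensions $\ge 2$ can have constant sectional curvature at a point only when both factors are flat and that curvature equals zero. The hypotheses $n_1,n_2\ge 2$ are exactly what is needed to produce the linearly independent vectors within each factor that drive the conclusion.
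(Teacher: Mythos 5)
Your proof is correct and follows essentially the same route as the paper: assume $K$ vanishes at a point, deduce constant sectional curvature $H$ there from the Gauss equation, compare a mixed sectional curvature against \eqref{eqn:2.10} to force $H=0$, and then compare within-factor sectional curvatures to force $c_1=c_2=0$, contradicting $c_1^2+c_2^2\neq 0$.
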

\begin{proof}
Suppose on the contrary that the difference tensor $K$ vanishes at
the point $p=(p_1,p_2)\in M^n=M_1^{n_1}\times M_2^{n_2}$. Then, from
\eqref{eqn:2.3} we know that
\begin{equation}\label{eqn:3.1'}
\hat{R}(X,Y)Z=H\big[h(Y,Z)X-h(X,Z)Y\big]\ \ {\rm at}\ p.
\end{equation}
Thus $(M^n,h)$ has constant sectional curvature $H$ at $p$.

Now, taking unit vectors $X\in T_pM_1^{n_1}$ and $Y=Z\in
T_pM_2^{n_2}$ in both \eqref{eqn:2.10} and \eqref{eqn:3.1'}, we
get $H=0$.

Next, taking unit vectors $X,Y=Z\in T_pM_1^{n_1}$ with $X\perp Y$ in
both \eqref{eqn:2.10} and \eqref{eqn:3.1'}, we get $c_1=0$.
Similarly, taking unit vectors $X,Y=Z\in T_pM_2^{n_2}$ with
$X\perp Y$ in both \eqref{eqn:2.10} and \eqref{eqn:3.1'}, we get
$c_2=0$.

Hence, $c_1=c_2=0$. This is a contradiction to that
$c_1^2+c_2^2\neq0$.
\end{proof}

Notice that if $c_1c_2=0$, then without loss of generality we can
assume that $c_1=0$ and $c_2\neq0$. Thus, in sequel we are
sufficient to consider the following two cases:

\vskip 2mm

\begin{center}
\textbf{Case $\mathfrak{C}_1$}: $c_1=0$ and $c_2\neq 0$;\ \ \ \
\textbf{Case $\mathfrak{C}_2$}: $c_1\neq 0$ and $c_2\neq 0$.
\end{center}

\vskip 2mm

In the remaining of this section, we consider only \textbf{Case $\mathfrak{C}_1$}.
In order to decide the difference tensor, first of all we have the
following lemma.

\begin{lem}\label{lm:3.2}
For $p\in M_1^{n_1}\times M_2^{n_2}$, let $\{X_i\}_{1\le i\le n_1}$
and $\{Y_j\}_{1\le j\le n_2}$ be orthonormal bases of $T_pM_1^{n_1}$
and $T_pM_2^{n_2}$, respectively. Then, in \textbf{Case
$\mathfrak{C}_1$}, we have
\begin{equation}\label{eqn:3.2}
K_{X_i}Y_\alpha=\mu(X_i)Y_\alpha,\ \ 1\le i\le n_1,\ 1\le
\alpha\le n_2,
\end{equation}
where $\mu(X_i)=:\mu_i$ depends only on $X_i$ for $i=1,\ldots,n_1$.
Moreover, it holds that
\begin{equation}\label{eqn:3.8}
\mu(X_1)^2+\cdots+\mu(X_{n_1})^2=-\tfrac{n_1}{n_2+1}H.
\end{equation}
\end{lem}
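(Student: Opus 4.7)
My plan is to extract the structure of $K$ by combining the Gauss equation \eqref{eqn:2.3} with the Tsinghua identity \eqref{eqn:2.9}, leveraging the asymmetry $c_1=0\ne c_2$ in the product curvature formula \eqref{eqn:2.10}. That asymmetry immediately gives $\hat R(U,V)=0$ whenever at least one of $U$ or $V$ lies in $T_pM_1^{n_1}$. In particular, Gauss applied to mixed arguments $X\in T_pM_1^{n_1}$, $Y\in T_pM_2^{n_2}$ collapses to the commutator relation $[K_X,K_Y]Z = H\bigl[h(Y,Z)X-h(X,Z)Y\bigr]$, which I will use to finish \eqref{eqn:3.8}.

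Next I would show the block property $K(X,Z)\in T_pM_1^{n_1}$ for $X,Z\in T_pM_1^{n_1}$. Substituting $W,Y\in T_pM_2^{n_2}$ and $X,Z\in T_pM_1^{n_1}$ into \eqref{eqn:2.9} kills every curvature term except $\hat R(Y,W)K(X,Z)$, since $\hat R(Y,W)Z=0$ when $Z\in T_pM_1^{n_1}$. Letting $V$ denote the $T_pM_2^{n_2}$-projection of $K(X,Z)$, the surviving equation reads $h(W,V)Y=h(Y,V)W$ for all $W,Y\in T_pM_2^{n_2}$, and the assumption $n_2\ge 2$ forces $V=0$.

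The main technical step is then to apply \eqref{eqn:2.9} with $W=X_i\in T_pM_1^{n_1}$, $X=Y_a$, $Y=Y_b$ for distinct basis indices $a\ne b$, and $Z=Y_a\in T_pM_2^{n_2}$. The mixed curvature terms vanish again and only contributions from $\hat R(Y_a,Y_b)$ survive. Expanding $K_{X_i}Y_a=\sum_c \alpha_{ac}Y_c+\sum_\ell \beta_{a\ell}X_\ell$ and comparing coefficients—while invoking the total symmetry of the cubic form $h(K(\cdot,\cdot),\cdot)$ to identify $\alpha_{ab}=\alpha_{ba}$—I expect to read off $\beta_{b\ell}=0$, $\alpha_{bc}=0$ for $c\ne b$, and $\alpha_{aa}=\alpha_{bb}$. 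Varying $a,b$ over $\{1,\dots,n_2\}$ then produces a single scalar $\mu(X_i)$ with $K_{X_i}Y_\alpha=\mu(X_i)Y_\alpha$, and the linearity $X\mapsto K_X$ makes $\mu$ a well-defined linear functional on $T_pM_1^{n_1}$. This coefficient bookkeeping is the main obstacle; everything else runs mechanically once the block structure is in place.

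Finally, for \eqref{eqn:3.8}, I would feed $Z=X_i$ into the commutator identity from the first step to obtain $[K_{X_i},K_{Y_\alpha}]X_i=-HY_\alpha$. Since $K_{Y_\alpha}X_i=K_{X_i}Y_\alpha=\mu(X_i)Y_\alpha$ from the previous step, and $K_{X_i}X_i\in T_pM_1^{n_1}$ from the block step, this simplifies to the scalar identity $\mu(X_i)^2-\mu(K_{X_i}X_i)=-H$. Summing over $i$ and invoking the apolarity condition \eqref{eqn:2.7} on $K_V$, where $V\in T_pM_1^{n_1}$ is the $\mu$-dual vector (so $\mu(X)=h(V,X)$), one finds $\sum_i\mu(K_{X_i}X_i)=-n_2\,h(V,V)=-n_2\sum_i\mu_i^2$. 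Substituting yields $(n_2+1)\sum_i\mu_i^2=-n_1H$, which is precisely \eqref{eqn:3.8}.
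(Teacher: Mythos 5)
Your proposal is correct and follows essentially the same route as the paper: both use the Tsinghua identity \eqref{eqn:2.9} with mixed substitutions (your $W=X_i$, $X=Y_a$, $Y=Y_b$, $Z=Y_a$ is, after relabeling, the paper's choice that leads to \eqref{eqn:3.3}, and your block-property substitution is the one producing \eqref{eqn:3.6}--\eqref{eqn:3.7}), and both derive \eqref{eqn:3.8} from the Gauss equation on the mixed plane together with apolarity. Your reformulation via the commutator identity $[K_{X_i},K_{Y_\alpha}]X_i=-HY_\alpha$ and the $\mu$-dual vector $V$ is a cleaner packaging of the same computation the paper carries out in \eqref{eqn:3.9}--\eqref{eqn:3.10}, since $h(K_{Y_j}Y_j,K_{X_i}X_i)=h(V,K_{X_i}X_i)=\mu(K_{X_i}X_i)$.
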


\begin{proof}
Let $\{X_1,\ldots,X_{n_1}\}$ (resp. $\{Y_1,\ldots,Y_{n_2}\}$) be an
orthonormal basis of $T_pM_1^{n_1}$ (resp. $T_pM_2^{n_2}$). Taking
$X=X_i$, $Y=Y_\alpha$ and $Z=W=Y_\beta\ (\alpha\neq \beta)$ in
\eqref{eqn:2.9}, then using \eqref{eqn:2.10} we obtain
\begin{equation}\label{eqn:3.3}
0=c_2\sum_{m=1}^{n_2}(\delta_{\beta m}Y_\alpha-\delta_{\alpha m}Y_\beta)h(K_{X_i}Y_\beta,Y_m)
-c_2K_{X_i}Y_\alpha.
\end{equation}

Taking the component of \eqref{eqn:3.3} on $Y_\beta$, we have that
\begin{equation}\label{eqn:3.4}
h(K_{X_i}Y_\alpha,Y_\beta)=0,\ \ 1\le i\le n_1,\ 1\le \alpha\neq
\beta\le n_2.
\end{equation}

Taking the component of \eqref{eqn:3.3} on $Y_\alpha$, we have
\begin{equation}\label{eqn:3.5}
h(K_{X_i}Y_\alpha,Y_\alpha)=h(K_{X_i}Y_\beta,Y_\beta),\ \ 1\le
i\le n_1,\ 1\le \alpha,\beta\le n_2.
\end{equation}

Similarly, taking $X=Y_\alpha$, $Y=X_i$, $Z=X_j$ and $W=Y_\beta$ in
\eqref{eqn:2.9}, then using \eqref{eqn:2.10} we obtain
\begin{equation}\label{eqn:3.6}
0=c_2\sum_{m=1}^{n_2}(\delta_{m\alpha}Y_\beta-\delta_{\beta
m}Y_\alpha)h(K_{X_i}X_j,Y_m).
\end{equation}

Let $\alpha\neq \beta$, then \eqref{eqn:3.6} implies that
\begin{equation}\label{eqn:3.7}
h(K_{X_i}X_j, Y_\alpha)=0,\ \ 1\le i,j\le n_1,\ 1\le \alpha\le
n_2.
\end{equation}

Combining \eqref{eqn:3.4}, \eqref{eqn:3.5} and \eqref{eqn:3.7}, the
assertion \eqref{eqn:3.2} immediately follows.

\vskip 2mm

Next, we compute the sectional curvature $K(\pi(X_i,Y_j))$ of the
plane $\pi$ spanned by $X_i$ and $Y_j$, for some fixed
$i\in\{1,\ldots,n_1\}$ and $j\in\{1,\ldots,n_2\}$. For that purpose,
using \eqref{eqn:2.10} on the one hand, and \eqref{eqn:2.3} on the
other hand, together with applying \eqref{eqn:3.2}, we obtain
\begin{align*}
0=&H-h(K_{Y_j}Y_j, K_{X_i}X_i )+h(K_{X_i}Y_j, K_{Y_j}X_i)\\
=&H+\mu(X_i)^2-h(K_{Y_j}Y_j, K_{X_i}X_i),\ \ 1\le i\le n_1,\ 1\le
j\le n_2.
\end{align*}
Then, taking summation over $i=1,\ldots,n_1$, and using
\eqref{eqn:3.2}, we get
\begin{equation}\label{eqn:3.9}
\begin{aligned}
0&=n_1H +\sum_{i=1}^{n_1}\mu(X_i)^2-h(K_{Y_j}Y_j, \sum_{i=1}^{n_1}
K_{X_i}X_i)\\[-1mm]
&=n_1H+\sum_{i=1}^{n_1}\mu(X_i)^2-\sum_{k=1}^{n_1}\sum_{i=1}^{n_1}h(K_{X_k}
X_i, X_i)\mu(X_k).
\end{aligned}
\end{equation}

On the other hand, the apolarity condition implies that, for each
$k=1,\ldots,n_1$,
\begin{equation}\label{eqn:3.10}
0=\sum_{i=1}^{n_1}h(K_{X_k}X_i, X_i)+\sum_{j=1}^{n_2}h(K_{X_k}Y_j,
Y_j)=\sum_{i=1}^{n_1} h(K_{X_k}X_i, X_i)+n_2\mu(X_k).
\end{equation}

Therefore, from \eqref{eqn:3.9} and \eqref{eqn:3.10}, we obtain
\begin{equation}\label{eqn:3.11}
\mu(X_1)^2+\cdots+\mu(X_{n_1})^2=-\tfrac{n_1}{n_2+1}H.
\end{equation}

This completes the proof of Lemma \ref{lm:3.2}.
\end{proof}

Now, before going to show the next lemma, we will describe the
construction of a typical orthonormal basis, which was introduced by
N. Ejiri and has been widely applied, and proved to be very useful
for various situations, see e.g. \cite{HLSV} and \cite{LV,MU}. The
idea is to construct a basis from a self-adjoint operator at a
point; then one extends the basis to local orthonormal vector
fields. In this paper, we have the general principle as below:

For an arbitrary $p\in M^n=M_1^{n_1}\times M_2^{n_2}$, let
$U_pM_1^{n_1}=\{u\in T_pM_1^{n_1} \mid h(u,u)=1\}$ and $E_p\subset
T_{p_1}M_1^{n_1}\times \{0\}$ a vector subspace. Since $M^n$ is
locally strongly convex, $U_pM_1^{n_1}\cap E_p$ is compact. We
define on this set the function
$$
f_1(u)=h(K_uu,u),\ \ u\in U_pM_1^{n_1}\cap E_p.
$$
Then there is an element $e_{1}\in U_pM_1^{n_1}\cap E_p$ at which the function
$f_1(u)$ attains the absolute maximum. Let $u\in U_pM_1^{n_1}\cap E_p$ such that
$h(u,e_1)=0$, and define a function $g$ by $g(t):=f_1\big(\cos t\,e_1+\sin t\,
u\big)$. Then we have
\begin{equation}\label{eqn:3.12}
g^\prime(0)=3\,h(K_{e_1}e_1,u),\ \
g^{\prime\prime}(0)=6\,h(K_{e_1}u,u)-3\,f_1(e_1).
\end{equation}

Since $g$ attains an absolute maximum at $t=0$, we have
$g^\prime(0)=0, g^{\prime\prime}(0)\le 0$, i.e.,
\begin{equation}\label{eqn:3.13}
h(K_{e_1}e_1,u)=0,\ h(K_{e_1}e_1,e_1)\ge 2h(K_{e_1}u,u),\
h(u,u)=1,\, u\perp e_1.
\end{equation}

Analogously, we can define a function $f_2$ on $U_pM_2^{n_2}\cap
\tilde E_p$, where $U_pM_2^{n_2}=\{u\in T_pM_2^{n_2} \mid
h(u,u)=1\}$ and $\tilde E_p\subset \{0\}\times T_{p_2}M_2^{n_2}$ a
vector subspace. We can choose $e_1\in U_pM_2^{n_2}\cap \tilde E_p$
such that \eqref{eqn:3.13} holds for $u\in U_pM_2^{n_2}\cap \tilde
E_p$ with $u\perp e_1$.

In the following, we will apply the above principle of choosing the
unit vector $e_1$ many times.

\vskip 1mm

Now, as a supplement to Lemma \ref{lm:3.2}, we can prove the
following lemma.

\begin{lem}\label{lm:3.4}
Given $p=(p_1,p_2)\in M_1^{n_1}\times M_2^{n_2}$. Let $\{X_i\}_{1\le
i\le n_1}$ and $\{Y_j\}_{1\le j\le n_2}$ be the orthonormal bases of
$T_pM_1^{n_1}$ and $T_pM_2^{n_2}$, respectively. Then, in
\textbf{Case $\mathfrak{C}_1$}, we have
\begin{equation}\label{eqn:3.14}
K_{Y_\alpha}Y_\beta=\delta_{\alpha\beta}(\mu_1X_1+\cdots+\mu_{n_1}X_{n_1}),\
\ 1\le \alpha,\beta\le n_2,
\end{equation}
Moreover, we have $c_2=\frac{n+1}{n_2+1}H<0$.
\end{lem}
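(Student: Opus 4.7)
The plan is to combine the Ejiri maximum principle introduced just before the statement with the Gauss equation \eqref{eqn:2.3}. First, the total symmetry of $h(K(\cdot,\cdot),\cdot)$ and \eqref{eqn:3.2} yield $h(K_{Y_\alpha}Y_\beta,X_i)=h(K_{X_i}Y_\alpha,Y_\beta)=\mu_i\delta_{\alpha\beta}$, so that
\[K_{Y_\alpha}Y_\beta=\delta_{\alpha\beta}\sum_{i=1}^{n_1}\mu_iX_i+\sum_{\gamma=1}^{n_2}a_{\alpha\beta\gamma}Y_\gamma,\]
where $a_{\alpha\beta\gamma}:=h(K_{Y_\alpha}Y_\beta,Y_\gamma)$ is totally symmetric. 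The heart of the proof is to establish $a_{\alpha\beta\gamma}\equiv 0$.

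To do so, I would invoke the Ejiri principle with $\tilde E_p=T_pM_2^{n_2}$, picking a unit $Y_1$ that maximises $f_2(u)=h(K_uu,u)$. The first condition in \eqref{eqn:3.13} annihilates $a_{11\gamma}$ for $\gamma\neq 1$; set $\lambda_1:=a_{111}$. A short check using \eqref{eqn:3.2} shows $K_{Y_1}$ preserves $Y_1^\perp\cap T_pM_2^{n_2}$ as a self-adjoint endomorphism, and diagonalising it I would choose $Y_2,\ldots,Y_{n_2}$ orthonormal with $K_{Y_1}Y_\beta=\nu_\beta Y_\beta$.

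Next, computing the sectional curvature of the plane $\pi(Y_1,Y_\beta)$ in two ways -- once from \eqref{eqn:2.10} (giving $c_2$), once from \eqref{eqn:2.3} combined with \eqref{eqn:3.8} -- produces the quadratic
\[\nu_\beta^2-\lambda_1\nu_\beta+\kappa=0,\qquad \kappa:=\tfrac{n+1}{n_2+1}H-c_2,\qquad \beta=2,\ldots,n_2.\]
The second condition in \eqref{eqn:3.13} with $u=Y_\beta$ yields $\lambda_1\geq 2\nu_\beta$, and apolarity \eqref{eqn:2.7} applied at $Y_1$ (using \eqref{eqn:3.2}) reads $\lambda_1+\sum_{\beta\geq 2}\nu_\beta=0$. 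When some $\nu_\beta$ equals the larger root $(\lambda_1+\sqrt{\lambda_1^2-4\kappa})/2$, Ejiri forces the discriminant to vanish, after which apolarity gives $(n_2+1)\lambda_1/2=0$, hence $\lambda_1=\kappa=0$; when all $\nu_\beta$ are the smaller root, the same Ejiri--Gauss--apolarity package applied along the flag $Y_1^\perp\supset Y_1^\perp\cap Y_2^\perp\supset\cdots$ produces a cascade of analogous quadratics whose only consistent solution with $\lambda_1\geq 0$ is again $\lambda_1=0$.

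Once $\lambda_1=0$ is established, $\max_{U_pM_2^{n_2}}f_2=0$, and applying the same reasoning to $-u$ shows $f_2\equiv 0$; polarising the symmetric cubic form on $T_pM_2^{n_2}$ yields $a_{\alpha\beta\gamma}\equiv 0$, which is \eqref{eqn:3.14}. Finally, $\kappa=0$ translates to $c_2=\tfrac{n+1}{n_2+1}H$; since \eqref{eqn:3.8} forces $H\leq 0$ and the hypothesis $c_2\neq 0$ excludes $H=0$, we conclude $H<0$ and $c_2<0$. The main obstacle I anticipate is executing the case analysis for the ``smaller-root'' alternative cleanly, which is precisely where the iterated Ejiri principle on the orthogonal-complement flag does its work.
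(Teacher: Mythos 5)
Your reduction to the question $a_{\alpha\beta\gamma}\equiv 0$, the choice of an Ejiri maximizer $Y_1$ and the resulting quadratic $\nu_\beta^2-\lambda_1\nu_\beta+\kappa=0$, apolarity $\lambda_1+\sum_{\beta\ge 2}\nu_\beta=0$, the treatment of the larger-root case, and the polarization step once $\lambda_1=0$ is known, all agree with the paper. But the crucial smaller-root case is where the proposal breaks down, and your instinct that this is ``the main obstacle'' is entirely correct: the ``cascade along the flag'' does not produce a contradiction. If every $\nu_\beta$ equals $\tfrac12(\lambda_1-\sqrt{\lambda_1^2-4\kappa})$, apolarity yields $(n_2+1)\lambda_1=(n_2-1)\sqrt{\lambda_1^2-4\kappa}$, which is one equation in the two unknowns $\lambda_1,\kappa$ and is perfectly solvable with $\lambda_1>0$ (it merely forces $\kappa<0$). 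Repeating the Ejiri--Gauss--apolarity scheme on $Y_1^\perp\cap Y_2^\perp\cap\cdots$ produces further relations, but these are compatible: for instance with $n_2=3$ the system has the consistent solution $\nu_2=\nu_3=-\lambda_1/2$, $\kappa=-\tfrac34\lambda_1^2$, $\lambda_2=\lambda_1/\sqrt2$, $\nu_{2,3}=-\lambda_2$, so no pointwise-algebraic contradiction arises. The contradiction the paper derives is \emph{not} algebraic at a single point.

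The paper's actual resolution of this case is qualitatively different and differential rather than pointwise. It first proves a ``Claim 1'' that the set of possible maximizing eigenvalues $\theta_1$ is finite and independent of the basepoint $p$; this permits extending $\bar Y_1$ to a smooth local unit vector field near $p$ via the implicit function theorem, with constant $\theta_1,\theta_2,\ldots,\theta_{n_2}$. It then computes both sides of the Codazzi equation $(\hat\nabla_{\bar Y_i}K)(\bar Y_1,\bar Y_1)=(\hat\nabla_{\bar Y_1}K)(\bar Y_i,\bar Y_1)$ in this frame, deduces $\hat\nabla_{\bar Y_1}\bar Y_1=0$ and then $\hat\nabla_{\bar Y_i}\bar Y_1=0$ for all $i$, and concludes $c_2=h(\hat R(\bar Y_2,\bar Y_1)\bar Y_1,\bar Y_2)=0$, contradicting $c_2\neq 0$. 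To close the gap in your argument you would need to replace the flag cascade with this Codazzi-plus-extension argument (or an equivalent differential argument); the iterated maximum principle alone, applied pointwise, does not suffice.
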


\begin{proof} Let $\{X_i\}_{1\le
i\le n_1}$ and $\{Y_j\}_{1\le j\le n_2}$ be orthonormal bases of
$T_pM_1^{n_1}$ and $T_pM_2^{n_2}$, respectively. Then, according to
Lemma \ref{lm:3.2}, there are constants
$\{\theta_{\alpha\beta}^\gamma\}$ such that
\begin{equation*}
K_{Y_\alpha}Y_\beta=\delta_{\alpha\beta}(\mu_1
X_1+\cdots+\mu_{n_1}X_{n_1})+\sum_{\gamma=1}^{n_2}\theta_{\alpha\beta}^\gamma Y_l,\ \
1\le\alpha,\beta\le n_2.
\end{equation*}

We will show that $\theta_{\alpha\beta}^\gamma=0$ for $1\le \alpha,\beta,\gamma\le n_2$, or equivalently,
\begin{equation}\label{eqn:3.15}
h(K_{Y_\alpha}Y_\beta, Y_\gamma)=0,\ \ 1\le \alpha,\beta,\gamma\le n_2.
\end{equation}

We will prove \eqref{eqn:3.15} by contradiction.

Suppose on the contrary that \eqref{eqn:3.15} does not hold. Then,
following the preceding stated procedure, we can choose a unit
vector in $U_pM_2^{n_2}$, denoted by $\bar Y_1$, such that
$\theta_1:=h(K_{\bar Y_1}\bar Y_1,\bar Y_1)>0$ is the maximum of the
function $f_2$ defined on $U_{p_2}M_2^{n_2}$.

Define an operator $\mathcal{A}:T_pM_2^{n_2}\to
T_pM_2^{n_2}$ by
$$
\mathcal{A}(Y):=K_{\bar Y_1}Y-h(K_{\bar
Y_1}Y,X_1)X_1-\cdots-h(K_{\bar Y_1}Y,X_{n_1})X_{n_1}.
$$

Then, it is easy to show that $\mathcal{A}$ is self-adjoint and
satisfies $\mathcal{A}(\bar Y_1)=\theta_1\bar Y_1$. We can choose
orthonormal vectors in $U_pM_2^{n_2}$ orthogonal to $\bar Y_1$,
denoted by $\bar Y_2$, $\ldots, \bar Y_{n_2}$, which are the
remaining eigenvectors of the operator $\mathcal{A}$, with
associated eigenvalues $\theta_2,\ldots,\theta_{n_2}$, respectively.
Thus, by Lemma \ref{lm:3.2}, we get the conclusion that
\begin{equation}\label{eqn:3.16}
K_{\bar Y_1}\bar Y_1=\mu_1X_1+\cdots+\mu_{n_1}X_{n_1}+\theta_1\bar
Y_1,\ \ K_{\bar Y_1}\bar Y_i=\theta_i\bar Y_i,\ \ 2\le i\le n_2.
\end{equation}

In order to solve $\{\theta_i\}$ in \eqref{eqn:3.16}, taking
$X=Z=\bar Y_1$ and $Y=\bar Y_i$, $2\le i\le n_2$, in
\eqref{eqn:2.3}, using \eqref{eqn:2.10}, \eqref{eqn:3.16} and Lemma
\ref{lm:3.2}, we can obtain
\begin{equation}\label{eqn:3.17}
\theta_i^2-\theta_1\theta_i+\tfrac{n_1+1}{n_2+1}H-c_2=0,\ \ 2\le
i\le n.
\end{equation}

From \eqref{eqn:3.17} and the statement of \eqref{eqn:3.13}, we obtain
that
\begin{equation}\label{eqn:3.18}
\theta_2=\cdots=\theta_{n_2}=\tfrac12\big(\theta_1-\sqrt{\theta_1^2
-4(\tfrac{n+1}{n_2+1}H-c_2)}\,\big).
\end{equation}

Using \eqref{eqn:3.2}, \eqref{eqn:3.16}, \eqref{eqn:3.18} and
${\rm trace}\,K_{Y_1}=0$, we get
\begin{equation}\label{eqn:3.19}
(n_2+1)\theta_1=(n_2-1)\sqrt{\theta_1^2
-4(\tfrac{n+1}{n_2+1}H-c_2)}.
\end{equation}

Then, we have
$$
4\left(c_2-\tfrac{n+1}{n_2+1}H\right)=\Big[\left(\tfrac{n_2+1}{n_2-1}\right)^2-1\Big]\theta_1^2>0.
$$

It follows that $c_2>\tfrac{n+1}{n_2+1}H$ and
\begin{equation}\label{eqn:3.20}
\theta_1=(n_2-1)\sqrt{\tfrac{(n_2+1)c_2-(n+1)H}{n_2(n_2+1)}}.
\end{equation}

Next, we intend to extend $\bar Y_1\in U_pM_2^{n_2}$, that
satisfying \eqref{eqn:3.16}, to be a local unit vector field around
$p\in M^n$. For that purpose, we first make the following Claim.

\vskip 2mm

\textbf{Claim 1}. {\it For every $p=(p_1,p_2)\in M^n$, the set
$$
\Omega_p:=\Big\{\lambda\in\mathbb R\mid
V\in U_pM_2^{n_2}\ {\rm s.\,t.\ } K_{V}V=\lambda
V+\sum_{i=1}^{n_1}\mu_iX_i\Big\}
$$
consists of finite numbers, which are independent of the point $p\in M^n$.}

\vskip 2mm

To verify the claim, we notice that, for any fixed $p\in M^n$, the
above discussion implies that we have $\theta_1\in\Omega_p$ with
$V=\bar Y_1$. Thus, the set $\Omega_p$ is non-empty.

Next, assume an arbitrary $\lambda\in\Omega_p$ associated with $V\in
U_pM_2^{n_2}$ such that
$$
K_VV=\lambda V+\mu_1X_1+\cdots+\mu_{n_1}X_{n_1}.
$$

Then we put $\tilde Y_1=V$, $\tilde\theta_1=\lambda$ and define an
operator $\mathcal{B}:T_pM_2^{n_2}\to T_pM_2^{n_2}$ by
$$
\mathcal{B}(Y)=K_{\tilde Y_1}Y-h(K_{\tilde
Y_1}Y,X_1)X_1-\cdots-h(K_{\tilde Y_1}Y,X_{n_1})X_{n_1}.
$$

It is easily seen that $\mathcal{B}$ is self-adjoint and
$\mathcal{B}(\tilde Y_1)=\tilde\theta_1\tilde Y_1$. Then, we may
complete $\tilde Y_1$ to get an orthonormal basis $\{\tilde
Y_i\}_{1\le i\le n_2}$ of $T_pM_2^{n_2}$ by letting $\tilde
Y_2,\ldots, \tilde Y_{n_2}$ to be the eigenvectors of $\mathcal{B}$,
with eigenvalues $\tilde\theta_2,\ldots, \tilde\theta_{n_2}$,
respectively.

Similar to the proof of \eqref{eqn:3.17}, we have the existence of
an integer $n_{2,1}$ with $0\le n_{2,1}\le n_2-1$ such that, if
necessary, after renumbering the basis, it holds
\begin{equation}\label{eqn:3.21}
\left\{
\begin{aligned}
&\tilde\theta_2=\cdots=\tilde
\theta_{n_{2,1}+1}=\tfrac12\left(\tilde\theta_1+\sqrt{\tilde
\theta_1^2
-4(\tfrac{n+1}{n_2+1}H-c_2)}\,\right),\\
&\tilde\theta_{n_{2,1}+2}=\cdots=\tilde
\theta_{n_2}=\tfrac12\left(\tilde\theta_1-\sqrt{\tilde\theta_1^2
-4(\tfrac{n+1}{n_2+1}H-c_2)}\,\right).
\end{aligned}
\right.
\end{equation}

Then, by ${\rm trace}\,K_{\tilde Y_1}=0$, we find that
\begin{equation}\label{eqn:3.22}
(n_2+1)\tilde\theta_1-(n_2-2n_{2,1}-1)\sqrt{\tilde\theta_1^2
-4(\tfrac{n+1}{n_2+1}H-c_2)}=0.
\end{equation}

This implies that $\tilde\theta_1=\lambda$ is independent of the
point $p$ and takes value of only finite possibilities. The
assertion of Claim 1 immediately follows.

\vskip 2mm

To extend $\bar Y_1$ differentiably to a unit vector field on a
neighbourhood $U\subset M^n$ around $p$, which is still denoted by
$\bar Y_1$, such that, at every point $q\in U$, $f_2$ attains an
absolute maximum at $\bar Y_1(q)$,
we first take differentiable $h$-orthonormal vector fields
$\{E_1,\ldots,E_{n_2}\}$ defined on a neighbourhood $U'$ of $p$ and
satisfying $E_i(q)\in T_qM_2^{n_2}, q\in U',1\le i\le n_2$, such
that $E_i(p)=\bar Y_i$ for $1\le i\le n_2$. Then, we define a
function $\gamma$ by
$$
\gamma:\mathbb{R}^{n_2}\times U'\rightarrow \mathbb{R}^{n_2}\ {\rm
by}\ (a_1,\ldots,a_{n_2},q)\mapsto(b_1,\ldots,b_{n_2}),
$$
where
\begin{equation}\label{eqn:3.22add1}
b_k=\sum_{i,j=1}^{n_2}a_ia_jh(K_{E_i}E_j,E_k)-\theta_1a_k,\ \ 1\le
k\le n_2,
\end{equation}
are regarded as functions on $\mathbb{R}^{n_2}\times U'$:
$b_k=b_k(a_1,\ldots,a_{n_2},q)$.

Using \eqref{eqn:3.16} and the fact that $f_2$ attains an
absolute maximum at $E_1(p)$, we then obtain that
\begin{equation*}
\begin{aligned}
\tfrac{\partial b_k}{\partial a_m}(1,0,\ldots,0,p)&=2h(K_{E_1(p)}E_m(p),E_k(p))-\theta_1\delta_{km}\\
&=\left\{
\begin{aligned}
&0,\ \ {\rm if}\  k\neq m, \\
&\theta_1,\  \ {\rm if}\ k=m=1,\\
&2\theta_k-\theta_1,\ \ {\rm if}\ k= m\ge2.
\end{aligned}
\right.
\end{aligned}
\end{equation*}

Notice that, by assumption, \eqref{eqn:3.18} and \eqref{eqn:3.19},
we have $\theta_1>0$ and $2\theta_k-\theta_1\neq0$ for $2\le k\le
n_2$. Then, the implicit function theorem shows that there exist
differentiable functions $\{a_i(q)\}_{1\le i\le n_2}$ defined on a
neighbourhood $U''\subset U'$ of $p$, such that
\begin{equation}\label{eqn:3.22add2}
\left\{
\begin{aligned}
&a_1(p)=1,\ a_2(p)=\cdots=a_{n_2}(p)=0,\\
&b_i(a_1(q),\ldots,a_{n_2}(q),q)\equiv0,\ \ 1\le i\le n_2.
\end{aligned}
\right.
\end{equation}

Define the local vector field $V$ on $U''$ by
$$
V(q)=a_1(q)E_1(q)+\cdots+a_{n_2}(q)E_{n_2}(q),\ \
q\in U''.
$$
Then, for local basis of $TM_1^{n_1}$ around $U''$, still denoted by
$\{X_i\}_{1\le i\le n_1}$, from \eqref{eqn:3.22add1},
\eqref{eqn:3.22add2} and Lemma \ref{lm:3.2}, we have
$K_{X_i}Y=\mu_iY$ for any $Y\in TM_2^{n_2}$, and that
$$
K_VV=\theta_1 V+\mu_1h(V,V)X_1+\cdots+\mu_{n_1}h(V,V)X_{n_1}.
$$

Let us define $\|V\|=\sqrt{h(V,V)}$. Since $\|V\|(p)=1$, there
exists a neighbourhood $U\subset U''$ of $p$ such that $V\not=0$
on $U$. Then, $W=\tfrac{V}{\|V\|}$ is a unit vector field on $U$
that satisfies
$$
K_WW
=\tfrac{\theta_1}{\sqrt{h(V,V)}}W+\mu_1X_1+\cdots+\mu_{n_1}X_{n_1}.
$$

Denote $\tilde\theta_1=\theta_1/\sqrt{h(V,V)}$. Then, the proof of
Claim 1 implies that, as a function on $U$, $\tilde\theta_1$ takes
values of finite number, which satisfy \eqref{eqn:3.22} for some
$0\le n_{2,1}\le n_2-1$. This further implies from the fact
$h(V,V)(p)=1$ and the continuity of the function
$\theta_1/\sqrt{h(V,V)}$ that $h(V,V)\equiv1$ on $U$.

Let $\bar Y_1=W$ and take orthonormal vector fields $\bar
Y_2,\ldots,\bar Y_{n_2}$ orthogonal to $\bar Y_1$ so that $\{\bar
Y_1,\ldots,\bar Y_{n_1}\}$ forms a local orthonormal basis of
$TM_2^{n_2}$ on $U$. Then, according to \eqref{eqn:3.16},
\eqref{eqn:3.18} and \eqref{eqn:3.20}, we have a constant
$\theta_2=\cdots=\theta_{n_2}$ such that the difference tensor
satisfies
\begin{equation}\label{eqn:3.23}
K_{\bar Y_1}\bar Y_1=\mu_1X_1+\cdots+\mu_{n_1}X_{n_1}+\theta_1\bar
Y_1,\ \ K_{\bar Y_1}\bar Y_i=\theta_i\bar Y_i,\ \ 2\le i\le n_2.
\end{equation}

\vskip 2mm

Now, we can apply the Codazzi equation \eqref{eqn:2.4} to the basis
$\{\bar Y_i\}_{1\le i\le n_2}$.

By the property $h(\hat{\nabla}_{\bar Y_i}\bar Y_j,X_k)=0$ of
product manifold and \eqref{eqn:3.23}, we have the following
calculations:
\begin{equation}\label{eqn:3.24}
\begin{aligned}
(\hat{\nabla}_{\bar Y_i}K)(\bar Y_1,\bar Y_1)&=\hat{\nabla}_{\bar Y_i}K(\bar Y_1,\bar Y_1)-2K(\hat{\nabla}_{\bar Y_i}\bar Y_1,\bar Y_1)\\[-2mm]
&=(\theta_1-2\theta_2)\hat{\nabla}_{\bar Y_i}\bar Y_1+\sum_{k=1}^{n_1}\Big(\mu_k\hat{\nabla}_{\bar Y_i}X_k+\bar Y_i(\mu_k)X_k\Big),\\[-2mm]
&=(\theta_1-2\theta_2)\sum_{j=1}^{n_2}h(\hat{\nabla}_{\bar Y_i}\bar
Y_1,\bar Y_j)\bar Y_j\\[-2mm]
&\hspace{3cm}+\sum_{k=1}^{n_1}\Big(\mu_k \hat{\nabla}_{\bar
Y_i}X_k+\bar Y_i(\mu_k)X_k\Big),
\end{aligned}
\end{equation}
\begin{equation}\label{eqn:3.25}
\begin{aligned}
(\hat{\nabla}_{\bar Y_1}K)(\bar Y_i,\bar Y_1)&=\hat{\nabla}_{\bar
Y_1}K(\bar Y_i,\bar Y_1)
-K(\hat{\nabla}_{\bar Y_1}\bar Y_i,\bar Y_1)-K(\hat{\nabla}_{\bar Y_1}\bar Y_1,\bar Y_i)\\[-1mm]
&=\theta_2\hat{\nabla}_{\bar Y_1}\bar Y_i-K(\hat{\nabla}_{\bar Y_1}\bar Y_i,\bar Y_1)-K(\hat{\nabla}_{\bar Y_1}\bar Y_1,\bar Y_i)\\[-1mm]
&=\theta_2h(\hat{\nabla}_{\bar Y_1}\bar Y_i,\bar Y_1)\bar Y_1-h(\hat{\nabla}_{\bar Y_1}\bar Y_i,\bar Y_1)K(\bar Y_1,\bar Y_1)\\[-1mm]
&\hspace{32mm} -\sum_{j=2}^{n_2}h(\hat{\nabla}_{\bar Y_1}\bar
Y_1,\bar Y_j)K(\bar Y_j,\bar Y_i).
\end{aligned}
\end{equation}

Then, using $h((\hat{\nabla}_{\bar Y_i}K)(\bar Y_1,\bar Y_1),\bar
Y_1)=h((\hat{\nabla}_{\bar Y_1}K)(\bar Y_i,\bar Y_1),\bar Y_1)$ for
$i\ge2$ we get $\hat{\nabla}_{\bar Y_1}\bar Y_1=0$. This and
\eqref{eqn:3.25} give that $(\hat{\nabla}_{\bar Y_1}K)(\bar Y_i,\bar
Y_1)=0$ for $1\le i\le n_2$. Thus, using \eqref{eqn:2.4} and
\eqref{eqn:3.24}, we can finally get
\begin{equation}\label{3.26}
\hat{\nabla}_{\bar Y_i}\bar Y_1=0,\ \ 1\le i\le n_2.
\end{equation}

It follows that $c_2=h(\hat{R}(\bar Y_2,\bar Y_1)\bar Y_1,\bar
Y_2)=0$ and as desired we get a contradiction. Therefore,
\eqref{eqn:3.15} does hold.

Finally, taking $X=\bar Y_2$ and $Y=Z=\bar Y_1$ in \eqref{eqn:2.3},
with using \eqref{eqn:2.10}, \eqref{eqn:3.2} and \eqref{eqn:3.14},
we easily get the relation $c_2=\frac{n+1}{n_2+1}H$. This together
with \eqref{eqn:3.8} further implies that $H<0$.

We have completed the proof of Lemma \ref{lm:3.4}.
\end{proof}

For the difference tensor, besides the conclusions as stated in
Lemmas \ref{lm:3.2} and \ref{lm:3.4}, we shall construct in the
following Lemma \ref{lm:3.5} a typical local orthonormal frame on
$M^n$ so that more information of the difference tensor can be
derived for \textbf{Case $\mathfrak{C}_1$}. However, the proof of
Lemma \ref{lm:3.5} becomes more complicated when we compare it with
that of Lemma \ref{lm:3.4}.

\begin{lem}\label{lm:3.5}
In \textbf{Case $\mathfrak{C}_1$}, given $p\in M^n$, there exist
local orthonormal vector fields $\{X_i\}_{1\le i\le n_1}$ defined on
a neighbourhood $U$ of $p$, and satisfying $X_i(q)\in T_qM_1^{n_1}$
for $q\in U$ and $1\le i\le n_1$, such that the difference tensor
$K$ takes the following form:
\begin{equation}\label{eqn:3.27}
\left\{
\begin{aligned}
&K_{X_1}X_1=\lambda_{1,1}X_1,\\
&K_{X_i}X_i=\mu_1 X_1+\cdots+\mu_{i-1} X_{i-1}+\lambda_{i,i}X_i,\ \ 2\le i\le n_1,\\
&K_{X_i}X_j=\mu_iX_j,\ \ 1\le i<j\le n_1,\\
& K_{X_i}Y=\mu_iY,\ Y(q)\in T_qM_2^{n_2},\ \ 1\le i\le n_1,
\end{aligned}
\right.
\end{equation}
where $\lambda_{i,i}$ and $\mu_i\ (1\le i\le n_1)$ are constants,
and they satisfy the relations
\begin{equation}\label{eqn:3.28}
\left\{
\begin{aligned}
&\lambda_{i,i}+(n-i)\mu_i=0,\ \ 1\le i\le n_1,\\
&\lambda_{i,i}>0,\ 1\le i\le n_1-1;\ \ \lambda_{n_1,n_1}\ge0.
\end{aligned}
\right.
\end{equation}
\end{lem}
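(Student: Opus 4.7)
The plan is to construct the frame $\{X_1,\dots,X_{n_1}\}$ at the point $p$ by iterating the maximum-principle construction recalled before Lemma \ref{lm:3.4}, and then promote it to a differentiable local frame via the implicit function theorem in the style of Lemma \ref{lm:3.4}. At $p$ set $V_1=T_pM_1^{n_1}$, and for $k=1,\dots,n_1$ recursively pick $X_k\in V_k$ to be a unit vector at which $f_1(u)=h(K_uu,u)$ attains its absolute maximum on $U_pM_1^{n_1}\cap V_k$; then set $V_{k+1}=V_k\cap X_k^\perp$.

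At the $k$-th step, Lemma \ref{lm:3.2} together with the symmetry of $K$ implies that $K_{X_k}$ preserves $T_pM_1^{n_1}$. The first identity in \eqref{eqn:3.13} gives $h(K_{X_k}X_k,u)=0$ for $u\in V_k$ orthogonal to $X_k$, while the inductive hypothesis $K_{X_j}X_k=\mu_jX_k$ ($j<k$) combined with the full symmetry of $K$ forces $h(K_{X_k}X_k,X_j)=\mu_j$ for $j<k$; hence
\[
K_{X_k}X_k=\mu_1X_1+\cdots+\mu_{k-1}X_{k-1}+\lambda_{k,k}X_k,\qquad \lambda_{k,k}:=f_1(X_k)\ge 0.
\]
Since $K_{X_k}$ also preserves $V_k\cap X_k^\perp$, the Gauss equation \eqref{eqn:2.3} (with $c_1=0$) on this subspace yields the quadratic $K_{X_k}^2-\lambda_{k,k}K_{X_k}+(H-s_{k-1})\,\mathrm{id}=0$, where $s_{k-1}=\sum_{j<k}\mu_j^2$. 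The maximum-principle bound $\theta\le\lambda_{k,k}/2$ from \eqref{eqn:3.13} selects the smaller root $\theta_-$, so $K_{X_k}|_{V_k\cap X_k^\perp}=\theta_-\,\mathrm{id}$.

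Applying the Gauss equation instead to the mixed plane spanned by $X_k$ and a unit $Y\in T_pM_2^{n_2}$ produces the identical scalar equation for $\mu_k:=\mu(X_k)$, so $\mu_k\in\{\theta_+,\theta_-\}$. Since $H<0$ by Lemma \ref{lm:3.4}, the product $\theta_+\theta_-=H-s_{k-1}<0$ is negative and the two roots have opposite signs; combining the apolarity identity $\lambda_{k,k}+(n_1-k)\theta_-+n_2\mu_k=0$ with $\lambda_{k,k}\ge 0$ rules out the alternative $\mu_k=\theta_+$, forcing $\mu_k=\theta_-$. Apolarity then delivers $\lambda_{k,k}=-(n-k)\mu_k$, and the quadratic with $\theta=\mu_k$ collapses to the recursion $\mu_k^2=(s_{k-1}-H)/(n-k+1)$, whose closed form $s_k=-kH/(n-k+1)$ matches the total $s_{n_1}=-n_1H/(n_2+1)$ furnished by Lemma \ref{lm:3.2}. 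In particular $\mu_k\neq 0$ at every step, so $\lambda_{k,k}>0$ for $k\le n_1-1$ and $\lambda_{n_1,n_1}\ge 0$.

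The differentiable extension to a neighbourhood $U\subset M^n$ of $p$ follows the implicit-function scheme used in Lemma \ref{lm:3.4}: starting from differentiable orthonormal vector fields $\{E_1,\dots,E_{n_1}\}$ of $TM_1^{n_1}$ with $E_i(p)=X_i$, one writes the relations $K_{X_k}X_k-\sum_{j<k}\mu_jX_j-\lambda_{k,k}X_k=0$ and $K_{X_k}X_j-\mu_kX_j=0$ ($j>k$) as a nonsingular system in the coefficients of the $X_i$ with respect to $\{E_j\}$, and invokes the implicit function theorem; the algebraic finiteness of allowed eigenvalues then forces $\lambda_{k,k}$ and $\mu_k$ to be constant on $U$ by continuity. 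I expect the principal difficulty to lie in the sign analysis that excludes $\mu_k=\theta_+$, which depends on carefully balancing apolarity, the positivity $\lambda_{k,k}\ge 0$, and the opposite signs of $\theta_\pm$; in borderline situations (e.g.\ $n_1-k=n_2$) one apparently needs the total-sum identity $s_{n_1}=-n_1H/(n_2+1)$ of Lemma \ref{lm:3.2} to close the argument.
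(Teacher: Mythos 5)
Your strategy correctly mirrors the paper's overall structure---an inductive pointwise construction via the maximum principle on nested subspaces $V_k$, followed by a differentiable extension through the implicit function theorem and a finiteness argument---but it breaks down at the crucial step of excluding the root $\mu_k = \theta_+$. Since $\theta_+ + \theta_- = \lambda_{k,k}$ (sum of roots of the quadratic), substituting into the apolarity identity when $\mu_k = \theta_+$ gives $(n_2+1)\theta_+ + (n_1-k+1)\theta_- = 0$, that is $\theta_+ = \tfrac{n_1-k+1}{n_2+1}|\theta_-|$. The condition $\lambda_{k,k} = \theta_+ + \theta_- \geq 0$ is then equivalent to $n_1 - k \geq n_2$, which is perfectly consistent with $\mu_k=\theta_+$. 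So for every $k$ with $n_1-k\geq n_2$ (and in particular the base case $k=1$ whenever $n_1 > n_2$), your sign argument does not rule out $\mu_k = \theta_+$; this is not a borderline exception but a broad range, and it is precisely the regime where the paper must work hardest.

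The paper's actual exclusion of $\varepsilon_{k+1}=1$ (Claims I-(2) and II-(2) in the proof of Lemma \ref{lm:3.5}) is a two-layer maximum comparison: assuming $\mu_{k+1} = \theta_+$ one derives $n_1 - n_2 - k - 1 > 0$, constructs a second maximizer $\tilde X_{k+2}$ of $f_1$ on $V_{k+1}\cap U_pM_1^{n_1}$ with value $\lambda_{k+2,k+2}$, solves the trace and Gauss constraints for $\lambda_{k+1,k+1}$ and $\lambda_{k+2,k+2}$ in closed form, and then uses the inequality
\[
\tfrac{n_1+n_2-k+1}{n_1-n_2-k-1}>\tfrac{n_1+n_2+\varepsilon_{k+2}n_2-k}{n_1-k-2}
\]
(which relies on $n_2\geq 2$) to conclude $\lambda_{k+2,k+2} > \lambda_{k+1,k+1}$, contradicting the maximality of $\lambda_{k+1,k+1}$ on the larger sphere $V_k\cap U_pM_1^{n_1}$. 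Nothing analogous appears in your argument. Appealing at the end to the total-sum identity $\sum_i\mu_i^2=-n_1H/(n_2+1)$ of Lemma \ref{lm:3.2} does not close the gap either: it is a single scalar constraint, and it is not obvious---and would require its own combinatorial argument---that the closed-form recursion $s_k = -kH/(n-k+1)$ is the unique solution once one allows a mixture of sign choices $\varepsilon_j\in\{\pm1\}$ along the way. Until the case $n_1-k\geq n_2$ is dispatched by an argument of the paper's type, the proof has a genuine hole.
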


\begin{proof}

We give the proof by induction on the subscript $i$ of $K_{X_i}$.
According to the general principle of induction method, this
consists of two steps as below.

\vskip 2mm

\noindent {\bf The first step of induction.}

In this step, we should verify the assertion for $i=1$. To do so, we
have to show that, around any given $p\in M_1^{n_1}\times
M_2^{n_2}$, there exist orthonormal vector fields $\{X_i\}_{1\le
i\le n_1}$ defined on a neighbourhood $U$ of $p$ and satisfying
$X_i(q)\in T_qM_1^{n_1}$ for $q\in U$ and $1\le i\le n_1$, and real
numbers $\lambda_{1,1}>0$ and $\mu_1$, so that we have
$$
\left\{
\begin{aligned}
&K_{X_1}X_1=\lambda_{1,1} X_1,\ \
K_{X_1}X_i=\mu_1 X_i,\ \ 2\le i\le n_1,\\
&K_{X_1}Y=\mu_1 Y,\ \ Y(q)\in T_qM_2^{n_2},\\
&\lambda_{1,1}+(n-1)\mu_1=0.
\end{aligned}
\right.
$$
The proof of the above assertion will be divided into four claims as
below.

\vskip 2mm

\noindent\textbf{Claim I-(1)}. {\it Given $p\in
M_1^{n_1}\times M_2^{n_2}$, there exists an orthonormal basis
$\{X_i\}_{1\le i\le n_1}$ of $T_pM_1^{n_1}$, real numbers
$\lambda_{1,1}>0$, $\lambda_{1,2}=\cdots=\lambda_{1,n_1}$ and
$\mu_1$, such that $\lambda_{1,1}$ is the maximum of $f_1$
defined on $U_pM_1^{n_1}$, and the following relations hold:}
\begin{equation}\label{eqn:3.28add}
\left\{
\begin{aligned}
&K_{X_1}X_1=\lambda_{1,1} X_1,\ K_{X_1}X_i=\lambda_{1,i} X_i,\ \
2\le i\le n_1,\\[1mm]
&K_{X_1}Y=\mu_1 Y,\ \ Y\in T_pM_2^{n_2}.
\end{aligned}
\right.
\end{equation}
\begin{proof}[Proof of \textbf{Claim I-(1)}]
First, if for an orthonormal vectors $\{X_i\}_{1\le i\le n_1}$ and
for any $i,j,k=1,\ldots, n_1$, it holds $h(K_{X_i}X_j,X_k)=0$. Then
in \eqref{eqn:2.3} taking $X=X_1$ and $Y=Z=X_2$, using
\eqref{eqn:2.10} and \eqref{eqn:3.2}, we obtain $H=0$. This is a
contradiction to Lemma \ref{lm:3.4}.

Next, let $p\in M^n=M_1^{n_1}(c_1)\times M_2^{n_2}(c_2)$. We
choose $X_1\in U_pM_1^{n_1}$ such that
$\lambda_{1,1}=h(K_{X_1}X_1,X_1)$ is the maximum of $f_1(u)$ on
$U_pM_1^{n_1}$ and it must be the case $\lambda_{1,1}>0$. Then,
according to \eqref{eqn:3.2} and the statement of \eqref{eqn:3.13},
we know that $X_1$ is an eigenvector of $K_{X_1}$ and we can choose
orthonormal vectors $X_2,\ldots,X_{n_1}\in T_pM_1^{n_1}$
orthogonal to $X_1$ such that $K_{X_1}X_i=\lambda_{1,i}X_i$ for
$1\le i\le n_1$, and $K_{X_1}Y=\mu_1 Y$ for any $Y\in T_pM_2^{n_2}$.


Taking in \eqref{eqn:2.3} $X=Z=X_1$ and $Y=X_k$, and using
\eqref{eqn:2.10}, we can obtain
\begin{equation}\label{eqn:3.29}
\lambda_{1,k}^2-\lambda_{1,1}\lambda_{1,k}+H=0,\ \ 2\le k\le n_1.
\end{equation}

Similar to the proof of \eqref{eqn:3.13}, we have $\lambda_{1,1}\ge
2\lambda_{1,k}$ for $2\le k\le n_1$. Thus, solving \eqref{eqn:3.29} we
obtain $\lambda_{1,2}=\cdots=\lambda_{1,n_1}$ with
\begin{equation}\label{eqn:3.30}
\lambda_{1,k}=\tfrac{1}{2}\Big(\lambda_{1,1}-\sqrt{\lambda_{1,1}^2-4H}\,\Big),\
\ 2\le k\le n_1.
\end{equation}

Furthermore, taking in \eqref{eqn:2.3} $X=Z=X_1$ and $Y\in T_pM_2^{n_2}$ be a unit vector,
using \eqref{eqn:2.10} and \eqref{eqn:3.2}, we get
\begin{equation}\label{eqn:3.31}
\mu_1^2-\mu_1\lambda_{1,1}+H=0.
\end{equation}
Hence we have
\begin{equation}\label{eqn:3.32}
\mu_1
=\tfrac{1}{2}\Big(\lambda_{1,1}+\varepsilon_1\sqrt{\lambda_{1,1}^2-4H}\,\Big),\
\ \varepsilon_1=\pm 1.
\end{equation}

Finally, by \eqref{eqn:3.2}, \eqref{eqn:3.30}, \eqref{eqn:3.32} and
${\rm trace}\,K_{X_1}=0$, we obtain
\begin{equation}\label{eqn:3.33}
(n+1)\lambda_{1,1}+(-n_1+1+\varepsilon_1n_2)\sqrt{\lambda_{1,1}^2-4H}=0,
\end{equation}
and therefore, we have
\begin{equation}\label{eqn:3.34}
\lambda_{1,1}=2\sqrt{\tfrac{-H}{(\tfrac{n+1}{n_1-\varepsilon_1
n_2-1})^2-1}}.
\end{equation}

From \eqref{eqn:3.30}, \eqref{eqn:3.32} and \eqref{eqn:3.34}, we
have completed the proof of \noindent\textbf{Claim I-(1)}.
\end{proof}

\vskip 2mm

\noindent\textbf{Claim I-(2)}. {\it The real numbers described in
Claim I-(1) satisfy the relations:}
$$
\lambda_{1,2}=\cdots=\lambda_{1,n_1}=\mu_1\ \ {\rm and}\ \
\lambda_{1,1}+(n-1)\mu_1=0.
$$

\begin{proof}[Proof of \textbf{Claim I-(2)}]
From \eqref{eqn:3.30}, \eqref{eqn:3.32} and ${\rm
trace}\,K_{X_1}=0$, the assertions are equivalent to that
$\varepsilon_1=-1$. Suppose on the contrary that $\varepsilon_1=1$.
Then we have
\begin{equation}\label{eqn:3.37}
\mu_1\lambda_{1,2}=H,
\end{equation}
and \eqref{eqn:3.33} implies that
\begin{equation}\label{eqn:3.36}
n_1>n_2+1\ge3.
\end{equation}

Put $V_1=\{u\in T_pM_1^{n_1}|\,u\perp X_1\}$. Then, by arguments as
in the beginning of the proof for Claim I-(1) shows that the
function $f_1\not=0$ restricting on $V_1\cap U_pM_1^{n_1}$. We
rechoose a unit vector $X_2\in V_1$ such that
$\lambda_{2,2}=h(K_{X_2}X_2,X_2)>0$ is the maximum of $f_1(u)$
restricted on $\{u\in U_pM_1^{n_1}|\,u\perp X_1\}$.

Then, according to Lemma \ref{lm:3.2}, we can define a linear
mapping $\mathcal{A}:\ V_1\to V_1$ by
$\mathcal{A}(X):=K_{X_2}X-h(K_{X_2}X,X_1)X_1$. It is easily seen
that $\mathcal{A}$ is self-adjoint and $X_2$ is one of its
eigenvector. We can choose orthonormal vectors
$X_3,\ldots,X_{n_1}\in T_pM_1^{n_1}$ orthogonal to $X_2$, which are
the remaining eigenvectors of the operator $\mathcal{A}$, associated
to the eigenvalues $\lambda_{2,3},\ldots, \lambda_{2,n_1}$,
respectively. Therefore, we have
\begin{equation}\label{eqn:3.38}
K_{X_2}X_2=\lambda_{1,2}X_1+\lambda_{2,2}X_2,\
K_{X_2}X_i=\lambda_{2,i}X_i,\ \ 3\le i\le n_1.
\end{equation}

\vskip 1mm

Now, we can make use of \eqref{eqn:3.38} to derive the expected
contradiction.

Taking in \eqref{eqn:2.3} $X=Z=X_2$ and $Y=X_k$, using
\eqref{eqn:2.10} and \eqref{eqn:3.38}, we can obtain
\begin{equation}\label{eqn:3.39}
\lambda_{2,k}^2-\lambda_{2,2}\lambda_{2,k}+H-\lambda_{1,2}^2=0,\ \
3\le k\le n_1.
\end{equation}

Similar to the proof of \eqref{eqn:3.13}, we have $\lambda_{2,2}\ge
2\lambda_{2,k}$ for $3\le k\le n_1$. Then, solving \eqref{eqn:3.13},
we get $\lambda_{2,3}=\cdots=\lambda_{2,n_1}$ with
\begin{equation}\label{eqn:3.40}
\lambda_{2,k}=\tfrac{1}{2}\Big(\lambda_{2,2}-\sqrt{\lambda_{2,2}^2-4(H-\lambda_{1,2}^2)}\,\Big),\
\ 3\le k\le n_1.
\end{equation}

Similarly, taking in \eqref{eqn:2.3} $X=Z=X_2$ and $Y\in T_pM_2^{n_2}$
a unit vector, using \eqref{eqn:2.10}, \eqref{eqn:3.2} and
\eqref{eqn:3.38}, we get
\begin{equation}\label{eqn:3.41}
\mu_2^2-\mu_2\lambda_{2,2}+H-\lambda_{1,2}\mu_1=0.
\end{equation}

By using \eqref{eqn:3.37}, we can reduce \eqref{eqn:3.41} to be
\begin{equation}\label{eqn:3.42}
\mu_2^2-\mu_2\lambda_{2,2}=0.
\end{equation}
It follows that
\begin{equation}\label{eqn:3.43}
\mu_2=\tfrac{1}{2}(\lambda_{2,2}+\varepsilon_2\lambda_{2,2}),\ \
\varepsilon_2=\pm1.
\end{equation}

Then, by ${\rm trace}\,K_{X_2}=0$, and using \eqref{eqn:3.2},
\eqref{eqn:3.38}, \eqref{eqn:3.40} and \eqref{eqn:3.43}, we have
\begin{equation}\label{eqn:3.44}
(n_1+n_2+\varepsilon_2n_2)\lambda_{2,2}=(n_1-2)\sqrt{\lambda_{2,2}^2+4(\lambda_{1,2}^2-H)},
\end{equation}
which implies that
\begin{equation}\label{eqn:3.45}
\lambda_{2,2}=\sqrt{\tfrac{4(\lambda_{1,2}^2-H)}{\big(\tfrac{n_1+n_2+\varepsilon_2n_2}{n_1-2}\big)^2-1}}.
\end{equation}

Note that $\varepsilon_1=1$, from \eqref{eqn:3.34} we have
\begin{equation}\label{eqn:3.46}
\lambda_{1,1}=\sqrt{\tfrac{-4H}{\big(\tfrac{n_1+n_2+1}{n_1-n_2-1}\big)^2-1}}.
\end{equation}

Noticing that $n_2\ge2$ and, by \eqref{eqn:3.36}, $n_1\ge n_2+2$,
we have
\begin{equation*}
\begin{aligned}
&\tfrac{n_1+n_2+1}{n_1-n_2-1}-\tfrac{n_1+n_2+\varepsilon_2n_2}{n_1-2}
>\tfrac{n_1+n_2+1}{n_1-n_2-1}-\tfrac{n_1+2n_2}{n_1-2}=\tfrac{2(n_2+1)(n_2-1)}{(n_1-n_2-1)(n_1-2)}>0.
\end{aligned}
\end{equation*}
This, together with $H<0$, implies that
$\lambda_{2,2}>\lambda_{1,1}$. This is a contradiction.

Hence, we have $\varepsilon_1=-1$ and
$\lambda_{1,2}=\cdots=\lambda_{1,n_1}=\mu_1$.

Then, by ${\rm trace}\,K_{X_1}=0$ we get the second assertion.
\end{proof}

\noindent\textbf{Claim I-(3)}. {\it For every point $p=(p_1,p_2)\in
M^n$, the set
$$
\Omega_p:=\Big\{\lambda\in\mathbb R\mid V\in U_pM_1^{n_1}\ {\rm
s.\,t.\ } K_{V}V=\lambda V\Big\}
$$
consists of finite numbers, which are independent of $p\in M^n$.}

\begin{proof}[Proof of \textbf{Claim I-(3)}]
Claim I-(1) implies that $\Omega_p$ is non-empty. Assume that there
exists a unit vector $V\in T_pM_1^{n_1}$ such that $K_{V}V=\lambda
V$. Let $X_1:=V$ and $\lambda_{1,1}=\lambda$. Then, according to
Lemma \ref{lm:3.2}, we may complete $X_1$ to obtain an orthonormal
basis $\{X_i\}_{1\le i\le n_1}$ of $T_pM_1^{n_1}$ such that, for
each $2\le k\le n_1$, $X_k$ is the eigenvector of $ K_{X_1}$ with
eigenvalue $\lambda_{1,k}$.

Then we have \eqref{eqn:3.29}, from which we have an integer
$n_{1,1}$, $0\le n_{1,1}\le n_1-1$, such that, if necessary after
renumbering the basis, we have
\begin{equation}\label{eqn:3.47}
\left\{
\begin{aligned}
&\lambda_{1,2}=\cdots=\lambda_{1,n_{1,1}+1}=\tfrac{1}{2}\big(\lambda_{1,1}+\sqrt{\lambda_{1,1}^2-4H}\,\big),\\
&\lambda_{1,n_{1,1}+2}=\lambda_{1,n_1}=\tfrac{1}{2}\big(\lambda_{1,1}-\sqrt{\lambda_{1,1}^2-4H}\,\big).
\end{aligned}
\right.
\end{equation}

Similarly, we have \eqref{eqn:3.32}. Then, by ${\rm
trace}\,K_{X_1}=0$, we have
\begin{equation}\label{eqn:3.48}
(n+1)\lambda_{1,1}+(2n_{1,1}-n_1+1+\varepsilon_1n_2)\sqrt{\lambda_{1,1}^2-4H}=0.
\end{equation}

If $2n_{1,1}-n_1+1+\varepsilon_1n_2=0$, then $\lambda_{1,1}=0$.

If $2n_{1,1}-n_1+1+\varepsilon_1n_2<0$, then we have
\begin{equation}\label{eqn:3.49}
\lambda_{1,1}=\sqrt{\tfrac{4H}{1-(\tfrac{n_1+n_2+1}{2n_{1,1}-n_1+\varepsilon_1 n_2+1})^2}}.
\end{equation}

It follows that $\lambda_{1,1}$ has finite possibilities, and Claim
I-(3) is verified.
\end{proof}

\noindent\textbf{Claim I-(4)}. {\it The unit vector $X_1\in
U_pM_1^{n_1}$ given in Claim-I-(1) can be extended
differentiably to a unit vector field, still denoted by $X_1$, in a
neighbourhood $U\subset M^n$ of $p$, such that, for each
$q\in U$, the function $f_1$ defined on $U_qM_1^{n_1}$
attains its absolute maximum at $X_1(q)$.}
\begin{proof}[Proof of \textbf{Claim I-(4)}]
Let $\{E_1,\ldots,E_{n_1}\}$ be differentiable orthonormal vector
fields defined on a neighbourhood $U'$ of $p$ and satisfying
$E_i(q)\in T_qM_1^{n_1}, q\in U',1\le i\le n_1$, such that
$E_i(p)=X_i$ for $1\le i\le n_1$. Then, from the fact
$K_{X_1}X_1=\lambda_{1,1}X_1$ at $p$, we define a function $\gamma$
by
$$
\gamma:\mathbb{R}^{n_1}\times U'\rightarrow \mathbb{R}^{n_1}\ \ {\rm
by}\ \ (a_1,\ldots,a_{n_1},q)\mapsto(b_1,\ldots,b_{n_1}),
$$
where
\begin{equation}\label{eqn:3.49add1}
b_k=\sum_{i,j=1}^{n_1}a_ia_jh(K_{E_i}E_j,E_k)-\lambda_{1,1}a_k,\ \
k=1,2,\ldots,n_1,
\end{equation}
are regarded as functions on $\mathbb{R}^{n_1}\times U'$:
$b_k=b_k(a_1,\ldots,a_{n_1},q)$. Here, according to
\eqref{eqn:3.34} and the proof of Claim I-(2), the maximum of
$f_1$ defined on $U_qM_1^{n_1}$ is independent of $q\in
U'$, and it is equal to $\lambda_{1,1}=(n-1)\sqrt{-H/n}$.

Using \eqref{eqn:3.28add} and the fact that $f_1$ attains the
absolute maximum $\lambda_{1,1}$ at $E_1(p)$, we obtain that
\begin{equation*}
\begin{aligned}
\tfrac{\partial b_k}{\partial a_m}(1,0,\ldots,0,p)&=2h(K_{E_1(p)}E_m(p),E_k(p))-\lambda_{1,1}\delta_{km}\\
&=\left\{
\begin{aligned}
&0,\ \ {\rm if}\ k\neq m, \\
&\lambda_{1,1},\ \ {\rm if}\ k=m=1,\\
&2\lambda_{1,k}-\lambda_{1,1},\ \ {\rm if}\ k=m\ge2.
\end{aligned}
\right.
\end{aligned}
\end{equation*}
From the proof of Claim-I-(1) we have $\lambda_{1,1}>0$ and
$\lambda_{1,1}>2\lambda_{1,k}$ for $2\le k\le n_1$. Then, the implicit
function theorem shows that there exist differentiable functions
$\{a_i(q)\}_{1\le i\le n_1}$, defined on a neighbourhood
$U''\subset U'$ of $p$, such that
\begin{equation}\label{eqn:3.49add2}
\left\{
\begin{aligned}
&a_1(p)=1,\ a_2(p)=\cdots=a_{n_1}(p)=0,\\
&b_i(a_1(q),\ldots,a_{n_1}(q),q)\equiv0,\ \ 1\le i\le n_1.
\end{aligned}
\right.
\end{equation}

Define the local vector field $V$ on $U''$ by
$$
V(q)=a_1(q)E_1(q)+\cdots+a_{n_1}(q)E_{n_1}(q),\ \
q\in U''.
$$
Then, from \eqref{eqn:3.49add1}, \eqref{eqn:3.49add2} and
\eqref{eqn:3.2}, we get
\begin{equation}\label{eqn:3.49add3}
K_VV=\lambda_{1,1}V.
\end{equation}

Let us define $\|V\|=\sqrt{h(V,V)}$. Since $\|V\|(p)=1$, there
exists a neighbourhood $U\subset U''$ of $p$, such that $V\not=0$
on $U$, and it holds that
$$
K_{\tfrac{V}{\sqrt{h(V,V)}}}\tfrac{V}{\sqrt{h(V,V)}}
=\tfrac{\lambda_{1,1}}{\sqrt{h(V,V)}}\tfrac{V}{\sqrt{h(V,V)}}.
$$

From Claim I-(3), we know that
$\tfrac{\lambda_{1,1}}{\sqrt{h(V,V)}}$ takes values of finite
number. On the other hand, $\tfrac{\lambda_{1,1}}{\sqrt{h(V,V)}}$ is
continuous and $h(V,V)(p)=1$. Thus $h(V,V)\equiv1$.
It follows from \eqref{eqn:3.49add3} that, for any point $q\in
U$, the function $f_1$ attains its absolute maximum in
$V(q)$.

Define $X_1:=V$ on $U$. Then we have completed the proof of Claim I-(4).
\end{proof}

Finally, having determined the unit vector field $X_1$ as in Claim
I-(4), we can further choose orthonormal vectors
$X_2,\ldots,X_{n_1}$ orthogonal to $X_1$, defined on
$U$ and satisfying $X_i(q)\in T_qM_1^{n_1},
q\in U,2\le i\le n_1$. Then, it is easily seen
that, combining with Lemma \ref{lm:3.2}, Claim I-(1), Claim I-(2)
and their proofs, $\{X_1,\ldots,X_{n_1}\}$ turns into the desired
local orthonormal vector fields so that we have completed
the proof for the first step of induction.

\vskip 2mm

\noindent{\bf The second step of induction}

In this step, we first assume the assertion of Lemma \ref{lm:3.5}
for all $1\le i\le k$, where $k\in \{1,2,\ldots,n_1-2\}$ is a fixed
integer. Thus, we have:

{\it Around any given $p\in M_1^{n_1}\times M_2^{n_2}$,
there exist local orthonormal vector fields $\{X_i\}_{1\le i\le n_1}$ defined on
a neighborhood $U$ of $p$ and satisfying $X_i(q)\in T_qM_1^{n_1},
q\in U,1\le i\le n_1$, such that the difference tensor $K$ takes the form:
\begin{equation}\label{eqn:3.50}
\left\{
\begin{aligned}
&K_{X_1}X_1=\lambda_{1,1} X_1,\\
&K_{X_i}X_i=\mu_1 X_1+\cdots+\mu_{i-1} X_{i-1}+\lambda_{i,i}X_i,\ \ 2\le i\le k,\\
&K_{X_i}X_j=\mu_i X_j,\ \ 1\le i\le k, \ i<j\le n_1,\\
&K_{X_i}Y=\mu_iY,\ Y(q)\in T_qM_2^{n_2},\ \ 1\le i\le k,
\end{aligned}
\right.
\end{equation}
where, $\mu_i$ and $\lambda_{i,i}$ for $1\le i\le k$ are real
numbers, and they satisfy the relations:
\begin{equation}\label{eqn:3.51}
\lambda_{i,i}+(n-i)\mu_i=0,\ \lambda_{i,i}>0,\ \ 1\le i\le k.
\end{equation}
Moreover, at any $q\in U$, the number $\lambda_{i,i}$ is
the maximum of the function $f_1$ defined on
$$
\left\{u\in U_qM_1^{n_1}\,|\,u\perp X_1(q),\ldots,\,u\perp
X_{i-1}(q)\right\},
$$
for each $1\le i\le k$.}

\vskip 2mm

Then, as purpose of the second step, we should verify the
assertion of Lemma \ref{lm:3.5} for $i=k+1$. To do so, we are
sufficient to show that:

{\it There exist an orthonormal frame $\{\tilde{X}_i\}_{1\le i\le
n_1}$ on $TM_1^{n_1}$ around $p$, given by
$$
\tilde{X}_1=X_1,\ldots,\tilde{X}_k=X_k;\
\tilde{X}_l=\sum_{t=k+1}^{n_1}T^t_lX_t,\ \ k+1\le l\le n_1,
$$
such that $T=(T_l^t)_{k+1\le l,t\le n_1}$ is an orthogonal matrix,
and the difference tensor $K$ takes the following form:
\begin{equation}\label{eqn:3.52}
\left\{
\begin{aligned}
&K_{\tilde{X}_1}\tilde{X}_1=\lambda_{1,1} \tilde{X}_1,\\
&K_{\tilde{X}_i}\tilde{X}_i=\mu_1 \tilde{X}_1+\cdots+\mu_{i-1}\tilde{X}_{i-1}+\lambda_{i,i}\tilde{X}_i,\ \ 2\le i\le k+1,\\
&K_{\tilde{X}_i}\tilde{X}_j=\mu_i \tilde{X}_j,\ \ 1\le i\le k+1, \ i+1\le j\le n_1,\\
&K_{\tilde{X}_i}Y=\mu_iY,\ \ Y(q)\in T_qM_2^{n_2},\ \ 1\le i\le k+1,
\end{aligned}
\right.
\end{equation}
where, $\mu_i$ and $\lambda_{i,i}$, for $1\le i\le k+1$, are real
numbers, and they satisfy the relations
\begin{equation}\label{eqn:3.53}
\lambda_{i,i}+(n-i)\mu_i=0,\ \ 1\le i\le k+1.
\end{equation}
Moreover, at any $q$ around $p$, the number $\lambda_{i,i}$ is
the maximum of the function $f_1$ defined on
$$
\big\{u\in U_qM_1^{n_1}\,|\, u\perp{\rm span}\{\tilde
X_1(q),\ldots,\tilde X_{i-1}(q)\}\big\},
$$
for each $1\le i\le k+1$.}

\vskip 2mm

In order to prove the above conclusions, similar to the proof in the
first step, we also divide it into the verification of the following
four claims.

\vskip 1mm

\noindent\textbf{Claim II-(1)}. {\it For any $p\in M_1^{n_1}\times
M_2^{n_2}$, there exist an orthonormal basis $\{\bar{X}_i\}_{1\le
i\le n_1}$ of $T_pM_1^{n_1}$ and, real numbers
$\lambda_{k+1,k+1}>0$, $\lambda_{k+1,k+2}=\cdots=\lambda_{k+1,n_1}$
and $\mu_{k+1}$, such that the following relations hold:}
\begin{equation}\label{eqn:3.53add}
\left\{
\begin{aligned}
&K_{\bar{X}_1}\bar{X}_1=\lambda_{1,1}\bar{X}_1,\\
&K_{\bar{X}_i}\bar{X}_i=\mu_1\bar{X}_1+\cdots+\mu_{i-1}\bar{X}_{i-1}+\lambda_{i,i}X_i,\ \ 2\le i\le k+1,\\
&K_{\bar{X}_i}\bar{X}_j=\mu_i\bar{X}_j,\ \ 1\le i\le k,\ \ i+1\le j\le n_1,\\
&K_{\bar{X}_{k+1}}\bar{X}_i=\lambda_{k+1,i}\bar{X}_i,\,\ \ k+2\le i\le n_1,\\
&K_{\bar{X}_{k+1}}Y=\mu_{k+1}Y,\ \ Y\in T_pM_2^{n_2}.
\end{aligned}
\right.
\end{equation}

\begin{proof}[Proof of \textbf{Claim II-(1)}]

By the assumption of induction, we have local orthonormal vector
fields $\{X_i\}_{1\le i\le n_1}$ defined on a neighborhood $U$ of
$p$ and satisfying $X_i(q)\in T_qM_1^{n_1}$ for $q\in U$ and $1\le
i\le n_1$, such that \eqref{eqn:3.50} and \eqref{eqn:3.51} hold. We
first take $\bar{X}_1=X_1(p),\ldots,\bar{X}_k=X_k(p)$ and put
$$
V_k=\{u\in T_{p}M_1^{n_1} \mid u\perp \bar{X}_1,\dots,u\perp
\bar{X}_k\}.
$$

Then, similar argument as in the proof of Claim I-(1) shows that
when restricting on $V_k\cap U_pM_1^{n_1}$ the function
$f_1\not=0$. Thus, we can choose a unit vector $\bar{X}_{k+1}\in
V_k$ such that
$\lambda_{k+1,k+1}=h(K_{\bar{X}_{k+1}}\bar{X}_{k+1},\bar{X}_{k+1})$
is the maximum of $f_1$ on $V_k\cap U_pM_1^{n_1}$ with
$\lambda_{k+1,k+1}>0$.

Define a linear transformation $\mathfrak{A}:V_k\to V_k$ by
$$
\mathfrak{A}(X)=K_{\bar{X}_{k+1}}X-\sum_{i=1}^kh(K_{\bar{X}_{k+1}}X,
\bar{X}_i)\bar{X}_i,\ \ \forall\, X\in V_k.
$$

It is easily seen that $\mathfrak{A}$ is self-adjoint and
$\mathfrak{A}(\bar{X}_{k+1})=\lambda_{k+1,k+1}\bar{X}_{k+1}$. We can
choose orthonormal vectors $\bar{X}_{k+2},\ldots,\bar{X}_{n_1}\in
V_k$ orthogonal to $\bar{X}_{k+1}$, which are the remaining
eigenvectors of $\mathfrak{A}$ with associated eigenvalues
$\lambda_{k+1,k+2}, \ldots, \lambda_{k+1,n_1}$, respectively. Then,
by the assumption \eqref{eqn:3.50} of induction, we can show that
\begin{equation}\label{eqn:3.54}
\left\{
\begin{aligned}
&K_{\bar{X}_{k+1}}\bar{X}_{k+1}=\mu_1\bar{X}_1+\cdots+\mu_{k-1}\bar{X}_{k-1}+\mu_k\bar{X}_k
+\lambda_{k+1,k+1}\bar{X}_{k+1},\\&
K_{\bar{X}_{k+1}}\bar{X}_i=\lambda_{k+1,i}\bar{X}_i,\ \ k+2\le
i\le n_1.
\end{aligned}
\right.
\end{equation}

Taking $X=Z=\bar{X}_{k+1}$ and $Y=\bar{X}_i$ in \eqref{eqn:2.3} for
$k+2\le i\le n_1$, using \eqref{eqn:2.10} and \eqref{eqn:3.54}, we
can obtain
\begin{equation}\label{eqn:3.55}
\lambda_{k+1,i}^2-\lambda_{k+1,k+1}\lambda_{k+1,i}+H-\sum_{l=1}^k\mu_l^2=0,\
\ k+2\le i\le n_1.
\end{equation}

Similar to the proof of \eqref{eqn:3.13}, we have
$\lambda_{k+1,k+1}\ge 2\lambda_{k+1,i}$ for $k+2\le i\le n_1$. Then,
solving \eqref{eqn:3.55}, we get
$\lambda_{k+1,k+2}=\cdots=\lambda_{k+1,n_1}$ with
\begin{equation}\label{eqn:3.56}
\lambda_{k+1,i}=\tfrac{1}{2}\Big(\lambda_{k+1,k+1}-\Big[\lambda_{k+1,k+1}^2
-4(H-\sum_{l=1}^k\mu_l^2)\Big]^{1/2}\Big),\ \ k+2\le i\le n_1.
\end{equation}

Similarly, taking in \eqref{eqn:2.3} $X=Z=X_{k+1}$ and $Y\in
T_pM_2^{n_2}$ a unit vector, then using \eqref{eqn:2.10} and
\eqref{eqn:3.2}, we get
\begin{equation}\label{eqn:3.57}
\mu_{k+1}^2-{\mu_{k+1}}\lambda_{k+1,k+1}+H-\sum_{l=1}^k\mu_l^2=0.
\end{equation}

Hence, we have
\begin{equation}\label{eqn:3.58}
\mu_{k+1}=\tfrac{1}{2}\Big(\lambda_{k+1,k+1}+\varepsilon_{k+1}\Big[\lambda_{k+1,k+1}^2
-4(H-\sum_{l=1}^k\mu_l^2)\Big]^{1/2}\Big),\ \
\varepsilon_{k+1}=\pm1.
\end{equation}

On the other hand, by applying ${\rm trace}\,K_{\bar{X}_{k+1}}=0$,
we get $n_1-n_2\varepsilon_{k+1}-k-1>0$ and that
\begin{equation}\label{eqn:3.59}
\lambda_{k+1,k+1}=2(n_1-n_2\varepsilon_{k+1}-k-1)\sqrt{\tfrac{\sum_{l=1}^k\mu_l^2-H}
{(n_1+n_2-k+1)^2-(n_1-n_2\varepsilon_{k+1}-k-1)^2}}.
\end{equation}

From \eqref{eqn:3.56}, \eqref{eqn:3.58}, \eqref{eqn:3.59} and the
assumption that $\mu_1,\ldots,\mu_k$ are real numbers, we see that,
as claimed, $\lambda_{k+1,k+2}=\cdots=\lambda_{k+1,n_1}$ and
$\mu_{k+1}$ are also constants.

Moreover, by \eqref{eqn:3.54} and the assumption \eqref{eqn:3.50} of
induction, we get the assertion that \eqref{eqn:3.53add} holds.
\end{proof}

\noindent\textbf{Claim II-(2)}. {\it The real numbers described in
Claim II-(1) satisfy the relations:}
$$
\lambda_{k+1,k+2}=\cdots=\lambda_{k+1,n_1}=\mu_{k+1}\ \ {\rm and}\ \
\lambda_{k+1,k+1}+(n-k-1)\mu_{k+1}=0.
$$

\begin{proof}[Proof of \textbf{Claim II-(2)}]

From \eqref{eqn:3.56} and \eqref{eqn:3.58}, the first assertion is
equivalent to showing that $\varepsilon_{k+1}=-1$. Suppose on the
contrary that $\varepsilon_{k+1}=1$. Then we have
\begin{equation}\label{eqn:3.60}
\mu_{k+1}\lambda_{k+1,i}=H-\sum_{l=1}^k\mu_l^2,\ \ k+2\le i\le n_1.
\end{equation}

Now from ${\rm trace}\,K_{\bar{X}_{k+1}}=0$ and
$\lambda_{k+1,k+1}>0$ we obtain
\begin{equation}\label{eqn:3.61}
n_1-n_2-k-1>0,
\end{equation}
and that
\begin{equation}\label{eqn:3.62}
\lambda_{k+1,k+1}=2(n_1-n_2-k-1)\sqrt{\tfrac{\sum_{l=1}^k\mu_l^2-H}
{(n_1+n_2-k+1)^2-(n_1-n_2-k-1)^2}}.
\end{equation}

Put $V_{k+1}=\{u\in T_pM_1^{n_1} \mid u\perp
\bar{X}_1,\dots,u\perp \bar{X}_{k+1}\}$. Then \eqref{eqn:3.61} shows
that $\dim V_{k+1}=n_1-k-1\ge n_2+1\ge3$. Again, similar argument as
in the proof of Claim I-(1) shows that, restricting on $V_{k+1}\cap
U_pM_1^{n_1}$, the function $f_1\not=0$.

Now, by a totally similar argument as in the proof of Claim II-(1),
we can choose a new orthonormal basis $\{\tilde X_i\}_{1\le i\le
n_1}$ of $T_pM_1^{n_1}$ with $\tilde X_j=\bar{X}_j$ for $1\le
j\le k+1$, such that $f_1$, restricting on $V_{k+1}\cap
U_pM_1^{n_1}$, attains its maximum $\lambda_{k+2,k+2}>0$ at
$\tilde X_{k+2}$ so that $\lambda_{k+2,k+2} =h(K_{\tilde
X_{k+2}}\tilde X_{k+2},\tilde X_{k+2})$.

Similar as before, we define a self-adjoint operator $\mathfrak{B}:\
V_{k+1}\to V_{k+1}$ by
\begin{equation*}
\mathfrak{B}(X)=K_{\tilde X_{k+2}}X- \sum_{i=1}^{k+1}h(K_{\tilde
X_{k+2}}X,\tilde X_i)\tilde X_i.
\end{equation*}

Then $\mathfrak{B}(\tilde X_{k+2})=\lambda_{k+2,k+2}\tilde X_{k+2}$.
As before we can choose orthonormal vectors $\tilde
X_{k+3},\ldots,\tilde X_{n_1}\in V_{k+1}$, orthogonal to $\tilde
X_{k+2}$, which are the remaining eigenvectors of $\mathfrak{B}:\
V_{k+1}\to V_{k+1}$, with associated eigenvalues $\lambda_{k+2,k+3},
\ldots, \lambda_{k+2,n_1}$, respectively.

In this way, by using \eqref{eqn:3.53add}, we can show that
\begin{equation}\label{eqn:3.63}
\left\{
\begin{aligned}
&K_{\tilde X_{k+2}}\tilde X_{k+2}=\mu_1 \tilde
X_1+\cdots+\mu_{k}\tilde X_{k}+\lambda_{k+1,k+2}\tilde
X_{k+1}+\lambda_{k+2,k+2}\tilde
X_{k+2},\\
&K_{\tilde X_{k+2}}\tilde X_i=\lambda_{k+2,i}\tilde X_i,\ \ k+3\le
i\le n_1.
\end{aligned}
\right.
\end{equation}

Taking in \eqref{eqn:2.3} that $X=Z=\tilde X_{k+2}$ and $Y=\tilde
X_i$ for $k+3\le i\le n_1$, and using \eqref{eqn:2.10}, we can
obtain
\begin{equation}\label{eqn:3.64}
\lambda_{k+2,i}^2-\lambda_{k+2,k+2}\lambda_{k+2,i}
+H-\sum_{l=1}^k\mu_l^2-\lambda_{k+1,i}^2=0,\ \ k+3\le i\le n_1.
\end{equation}

Notice that $\lambda_{k+2,k+2}\ge 2\lambda_{k+2,i}$ for $k+3\le
i\le n_1$. Then, solving \eqref{eqn:3.64}, we get
\begin{equation}\label{eqn:3.65}
\begin{aligned}
\lambda_{k+2,i}=\tfrac{1}{2}\Big(\lambda_{k+2,k+2}-\Big[\lambda_{k+2,k+2}^2
-4(H-\sum_{l=1}^k\mu_l^2-\lambda_{k+1,i}^2)\Big]^{1/2}\Big)
\end{aligned}
\end{equation}
for $k+3\le i\le n_1$. Thus,
$\lambda_{k+2,k+3}=\cdots=\lambda_{k+2,n_1}$.

On the other hand, taking  in \eqref{eqn:2.3} $X=Z=\tilde X_{k+2}$
and $Y\in T_pM_2^{n_2}$ a unit vector, then using \eqref{eqn:2.10} and
\eqref{eqn:3.2}, we get
\begin{equation}\label{eqn:3.66}
\mu_{k+2}^2-{\mu_{k+2}}\lambda_{k+2,k+2}+H
-\lambda_{k+1,i}\mu_{k+1}-\sum_{l=1}^k\mu_l^2=0,\ \ k+2\le i\le n_1.
\end{equation}

From \eqref{eqn:3.60} and \eqref{eqn:3.66}, we get
\begin{equation}\label{eqn:3.67}
{\mu^2_{k+2}}-{\mu_{k+2}}\lambda_{k+2,k+2}=0,
\end{equation}
and, equivalently,
\begin{equation}\label{eqn:3.68}
\mu_{k+2}=\tfrac{1}{2}(\lambda_{k+2,k+2}+\varepsilon_{k+2}\lambda_{k+2,k+2}),\
\ \varepsilon_{k+2}=\pm1.
\end{equation}

Then, from ${\rm trace}\,K_{\tilde X_{k+2}}=0$ and
$\lambda_{k+2,k+2}>0$, we get $n_1-k-2>0$ and that
\begin{equation}\label{eqn:3.69}
\lambda_{k+2,k+2}=2(n_1-k-2)\sqrt{\tfrac{\lambda_{k+1,k+2}^2
+\sum_{l=1}^k\mu_l^2-H}{(n_1+n_2-k+\varepsilon_{k+2}n_2)^2-(n_1-k-2)^2}}\,.
\end{equation}

From \eqref{eqn:3.61} and that $n_2\ge2$, we have the following
calculations
\begin{equation}\label{eqn:3.70}
\begin{aligned}
\tfrac{n_1+n_2-k+1}{n_1-n_2-k-1}-\tfrac{n_1+n_2+\varepsilon_{k+2}n_2-k}{n_1-k-2}&>\tfrac{n_1+n_2-k+1}{n_1-n_2-k-1}-\tfrac{n_1+2n_2-k}{n_1-k-2}\\
&=\tfrac{2(n_2+1)(n_2-1)}{(n_1-n_2-k-1)(n_1-k-2)}>0.
\end{aligned}
\end{equation}

Then, by \eqref{eqn:3.62} and \eqref{eqn:3.69}, we get
$\lambda_{k+2,k+2}>\lambda_{k+1,k+1}$, which is a contradiction.

Hence, as claimed we have $\varepsilon_{k+1}=-1$ and
$\lambda_{k+1,k+2}=\cdots=\lambda_{k+1,n_1}=\mu_{k+1}$.

Finally, by ${\rm trace}\,K_{\bar{X}_{k+1}}=0$ and
\eqref{eqn:3.53add}, we get the second assertion that
$$
\lambda_{k+1,k+1}+(n-k-1)\mu_{k+1}=0.
$$

This completes the proof of Claim II-(2).
\end{proof}

\vskip 1mm

\noindent\textbf{Claim II-(3)}. {\it Under the assumptions of
induction, the set
$$
\Omega_{p,k}:=\Big\{\lambda\in\mathbb{R}\,|V\in
U_pM_1^{n_1}\setminus{\rm span}\{X_i(p)\}_{i=1}^k\ {\rm s.\,t.}\
K_VV=\lambda V+\sum_{i=1}^k\mu_iX_i\Big\}
$$
consists of finite numbers, which are independent of $p\in M^n$.}
\begin{proof}[Proof of \textbf{Claim II-(3)}]

We first notice that, for any fixed $p\in M^n$, Claim II-(1) shows
that $\lambda_{k+1,k+1}\in\Omega_{p,k}$ with $V=\bar X_{k+1}$. Thus,
the set $\Omega_{p,k}$ is non-empty.

Next, with the local orthonormal vector fields $\{X_i\}_{1\le i\le
n_1}$ around $p\in M_1^{n_1}\times M_2^{n_2}$, given by the
assumption of induction, we assume an arbitrary
$\lambda\in\Omega_{p,k}$ associated with $V\in
U_pM_1^{n_1}\setminus{\rm span}\{X_i(p)\}_{i=1}^k$ such that
$$
K_VV=\lambda V+\mu_1X_1+\cdots+\mu_kX_k.
$$

Then, at $p$, we put $\tilde X_{k+1}:=V$, $\tilde X_i=X_i(p)$
for $1\le i\le k$ and $\tilde\lambda_{k+1,k+1}=\lambda$.

Put $W_k=\{u\in T_pM_1^{n_1} \mid u\perp \tilde
X_1,\ldots,u\perp \tilde X_k\}$ and define $\mathfrak{F}: W_k\to
W_k$ by
$$
\mathfrak{F}(X)=K_{\tilde X_{k+1}}X-\sum_{i=1}^kh(K_{\tilde
X_{k+1}}X, \tilde X_i)\tilde X_i,\ \ X\in W_k.
$$

Then $\mathfrak{F}$ is a self-adjoint linear transformation and that
$\mathfrak{F}(\tilde X_{k+1})=\tilde\lambda_{k+1,k+1} \tilde
X_{k+1}$. Thus, we can choose an orthonormal basis $\{\tilde
X_i\}_{k+1\le i\le n_1}$ of $W_k$, such that
$$
\mathfrak{F}(\tilde X_i)=\tilde\lambda_{k+1,i}\tilde X_i,\ \ k+2\le
i\le n_1.
$$

Then, just like having did with equation \eqref{eqn:3.55}, we have
an integer $n_{1,k+1}$ with $0\le n_{1,k+1}\le n_1-(k+1)$ such that,
if necessary after renumbering the basis of $W_k$, it holds
\begin{equation}\label{eqn:3.71}
\left\{
\begin{aligned}
&\tilde \lambda_{k+1,k+2}=\cdots=\tilde
\lambda_{k+1,n_{1,k+1}+k+1}\\[-2mm]
&\ \ \ \ \ \ \ \ \ \ \ \ =\frac{1}{2}\Big(\tilde
\lambda_{k+1,k+1}+\Big[\tilde \lambda_{k+1,k+1}^2
        -4(H-\sum_{l=1}^k\mu_l^2)\Big]^{1/2}\Big),\\[-2mm]
&\tilde \lambda_{k+1, n_{1,k+1}+k+2}=\cdots=\tilde
\lambda_{k+1,n_1}\\[-2mm]
&\ \ \ \ \ \ \ \ \ \ \ \ =\frac{1}{2}\Big(\tilde
\lambda_{k+1,k+1}-\Big[\tilde \lambda_{k+1,k+1}^2
        -4(H-\sum_{l=1}^k\mu_l^2)\Big]^{1/2}\Big).
\end{aligned}\right.
\end{equation}

Similar as deriving \eqref{eqn:3.58}, now we also have
\begin{equation}\label{eqn:3.71add}
\mu_{k+1}=\tfrac{1}{2}\Big(\tilde
\lambda_{k+1,k+1}+\varepsilon_{k+1}\Big[\tilde \lambda_{k+1,k+1}^2
-4(H-\sum_{l=1}^k\mu_l^2)\Big]^{1/2}\Big),\ \
\varepsilon_{k+1}=\pm1.
\end{equation}.

Then, computing ${\rm trace}\,K_{\tilde X_{k+1}}=0$, gives that
\begin{equation}\label{eqn:3.72}
\begin{aligned}
&(n_1+n_2-k+1)\tilde \lambda_{k+1,k+1}\\[-3mm]
&+(2n_{1,k+1}-n_1+n_2\varepsilon_{k+1}+k+1)\Big[\tilde
\lambda_{k+1,k+1}^2
        -4(H-\sum_{l=1}^k\mu_l^2)\Big]^{1/2}=0.
\end{aligned}
\end{equation}

From \eqref{eqn:3.72} we have proved the assertion that
$\lambda=\tilde \lambda_{k+1,k+1}$ takes values of only finite
possibilities and they are independent of the point $p$.
\end{proof}

\noindent\textbf{Claim II-(4)}. {\it Under the assumptions of
induction, the unit vector $\bar{X}_{k+1}\in T_pM_1^{n_1}$,
determined by Claim II-(1), can be extended differentiably to a
local unit vector field in a neighbourhood $U$ of $p$, denoted by
$\tilde{X}_{k+1}$, such that for each $q\in U$ the function $f_1$,
defined on
$$
\mathcal{U}_k(q)=\{u\in U_qM_1^{n_1}\,|\,u\perp X_1(q),\dots,u\perp
X_k(q)\},
$$
attains its absolute maximum at $\tilde{X}_{k+1}(q)$.}

\begin{proof}[Proof of \textbf{Claim II-(4)}] First of all, according to
\eqref{eqn:3.59} and the proof of Claim II-(2), we notice that for
any $q$ around $p$, the maximum of $f_1$ defined on
$\mathcal{U}_k(q)$ is independent of $q$, and it equals to
$\lambda_{k+1,k+1}=(n-k-1)\sqrt{(\sum_{l=1}^k\mu_l^2-H)/(n-k)}$.

Now, we choose arbitrary differentiable orthonormal vector fields
$\{E_{k+1},\ldots,E_{n_1}\}$, defined on a neighbourhood $U'$ of $p$
such that, for $k+1\le i\le n_1$ and $q\in U'$, we have
$E_i(p)=\bar{X}_i$ and $E_i(q)\in\mathcal{U}_k(q)$.

Next, we define a function $\gamma$ by
$$
\begin{aligned}
\gamma:\ \mathbb{R}^{n_1-k}\times U'&\rightarrow
\mathbb{R}^{n_1-k},\\
(a_{k+1},\ldots,a_{n_1},q)&\mapsto(b_{k+1},\ldots,b_{n_1}),
\end{aligned}
$$
where
\begin{equation}\label{eqn:3.72add1}
b_l=\sum_{i,j=k+1}^{n_1}a_ia_jh(K_{E_i}E_j,E_l)-\lambda_{k+1,k+1}a_l,\
\ k+1\le l\le n_1,
\end{equation}
are regarded as functions on $\mathbb{R}^{n_1-k}\times U':\
b_l=b_l(a_{k+1},\ldots,a_{n_1},q)$.

Using Claim II-(1), the fact that $f_1$ attains its absolute
maximum $\lambda_{k+1,k+1}$ at $E_{k+1}(p)$, and that
$$
h(K_{E_{k+1}}E_i,E_j)|_{p}=\lambda_{k+1,i}\delta_{ij},\ \ k+1\le
i,j\le n_1,
$$
where $\lambda_{k+1,i}$ is given by \eqref{eqn:3.56}, we then obtain
that
\begin{equation*}
\begin{aligned}
\tfrac{\partial b_l}{\partial a_m}(1,0,\ldots,0,p)&=2h(
K_{E_{k+1}(p)}
E_m(p),E_l(p))-\lambda_{k+1,k+1}\delta_{lm}\\
&=\left\{
\begin{aligned}
&0,\ \ {\rm if}\ l\neq m ,\\
&\lambda_{k+1,k+1},\ \ {\rm if}\ l= m=k+1 ,\\
&2\lambda_{k+1,l}-\lambda_{k+1,k+1},\ \ {\rm if}\ k+2\le l= m\le
n_1.
\end{aligned}
\right.
\end{aligned}
\end{equation*}

Given that $\lambda_{k+1,k+1}>0$ and
$\lambda_{k+1,k+1}-2\lambda_{k+1,l}>0$ for $k+2\le l\le n_1$, the
implicit function theorem shows that in a neighbourhood $U''\subset
U'$ of $p$ there exist differentiable functions $\{a_{k+1},\ldots,
a_{n_1}\}$ satisfying
\begin{equation}\label{eqn:3.72add2}
\left\{
\begin{aligned}
&a_{k+1}(p)=1,\ a_{k+2}(p)=\cdots=a_{n_1}(p)=0,\\
&b_l(a_{k+1}(q),\ldots,a_{n_1}(q),q)\equiv0,\ q\in U'',\ \ k+1\le
l\le n_1.
\end{aligned}
\right.
\end{equation}

Define a local vector field $V$ on $U''$ by
$$
V(q)=a_{k+1}(q)E_{k+1}(q)+\cdots+a_{n_1}(q)E_{n_1}(q),\
\ q\in U''.
$$
Then $V(p)=\bar X_{k+1}$, there exists a neighbourhood $U\subset
U''$ of $p$, such that $V\not=0$ on $U$. Using
\eqref{eqn:3.72add1}, \eqref{eqn:3.72add2} and \eqref{eqn:3.2}, we
easily see that
$$
K_{V}V=\lambda_{k+1, k+1}V+\mu_1h(V,V) X_1+\cdots+\mu_k h(V,V) X_k,
$$
or, equivalently,
\begin{equation}\label{eqn:3.73}
K_{\tfrac{V}{\sqrt{h(V,V) }}} \tfrac{V}{\sqrt{h(V,V) }}
=\tfrac{\lambda_{k+1,k+1}}{\sqrt{h(V,V)}}
\tfrac{V}{\sqrt{h(V,V)}}+\sum_{i=1}^k\mu_iX_i,\ \ {\rm in}\ U.
\end{equation}

Now, according to Claim II-(3), the function
$\tfrac{\lambda_{k+1,k+1}}{\sqrt{h(V,V)}}$ takes values of only
finite possibilities. On the other hand,
$\tfrac{\lambda_{k+1,k+1}}{\sqrt{h(V,V)}}$ is continuous and $h(V,V)
(p)=1$. Thus $h(V,V)|_U\equiv1$. Let $\tilde{X}_{k+1}:=V$. Then,
\eqref{eqn:3.73} with $h(V,V)=1$ implies that
$$
K_{\tilde{X}_{k+1}}\tilde{X}_{k+1}=\lambda_{k+1,
k+1}\tilde{X}_{k+1}+\mu_1 X_1+\cdots+\mu_k X_k,
$$
and for any $q\in U$, $f_1$ attains its absolute maximum
$\lambda_{k+1, k+1}$ at $\tilde{X}_{k+1}(q)$.
\end{proof}

Let $\tilde{X}_1=X_1,\ldots,\tilde{X}_k=X_k$ and choose vector
fields $\tilde{X}_{k+2},\ldots,\tilde{X}_{n_1}$ such that, with
$\tilde{X}_{k+1}$ obtained as in Claim II-(4),
$\{\tilde{X}_1,\tilde{X}_2,\ldots,\tilde{X}_{n_1}\}$ is a local
orthonormal frame of $TM_1^{n_1}$ defined on a neighborhood $U$ of
$p$ and satisfies $X_i(q)\in T_qM_1^{n_1}$ for $q\in U$ and $1\le
i\le n_1$. Then, with respect to $\{\tilde{X}_i\}_{1\le i\le n_1}$
and combining with Lemma \ref{lm:3.2}, we immediately fulfil the
second step of induction.

In this way, the method of induction allows us to obtain the desired
orthonormal vector fields $\{X_1,\ldots,X_{n_1-1}\}$ defined on a
neighborhood $U$ of $p$ and satisfying $X_i(q)\in T_qM_1^{n_1}$ for
$q\in U$ and $1\le i\le n_1-1$. Finally, we choose a unit vector
field $X_{n_1}$ that is orthogonal to $\{X_1,\ldots,X_{n_1-1}\}$ and
that satisfies $X_{n_1}(q)\in T_qM_1^{n_1}$, such that
$\lambda_{n_1,n_1}\ge0$ (if necessary we change $X_{n_1}$ by
$-X_{n_1}$). Then, it is easy to see that $\{X_1,\ldots,X_{n_1}\}$
are the desired orthonormal vector fields. Accordingly, we have
completed the proof of Lemma \ref{lm:3.5}.
\end{proof}

\section{Proofs of the Theorems and Corollaries}\label{sect:4}


First of all, continuing with the study of \textbf{Case
$\mathfrak{C}_1$} in last section, we show that the local
orthonormal vector fields $\{X_i\}_{1\le i\le n_1}$, as determined
in Lemma \ref{lm:3.5}, consist of parallel vector fields such that
$\hat{\nabla} X_i=0$.


\begin{lem}\label{lm:3.6}
The local orthonormal vector fields $\{X_1,\ldots,X_{n_1}\}$, as
described by Lemma \ref{lm:3.5}, consist of parallel vector fields,
i.e.,
\begin{equation*}
\hat{\nabla} X_i=0,\ \ 1\le i\le n_1.
\end{equation*}
\end{lem}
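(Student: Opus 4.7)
The plan is to introduce the connection one-forms $\omega_i^{\,j}$ by $\hat\nabla_V X_i = \sum_{j=1}^{n_1}\omega_i^{\,j}(V)X_j$; the decomposition is legitimate because the sub-bundle $TM_1^{n_1}$ of a Riemannian product is parallel for $\hat\nabla$, so $\hat\nabla_V X_i\in TM_1^{n_1}$ for every $V$, and orthonormality of $\{X_i\}$ gives the antisymmetry $\omega_i^{\,j}=-\omega_j^{\,i}$. The goal is to show all $\omega_i^{\,j}\equiv 0$. I would do this by applying the Codazzi equation \eqref{eqn:2.4} to carefully chosen triples from $\{X_i\}\cup\{Y_\alpha\}$, where $\{Y_\alpha\}$ denotes a local $h$-orthonormal frame of $TM_2^{n_2}$, and exploiting the rigid canonical form of $K$ supplied by Lemmas \ref{lm:3.2}, \ref{lm:3.4} and \ref{lm:3.5}.

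For the $TM_2^{n_2}$-directions, I apply Codazzi to the triple $(Y_\alpha, X_i, X_j)$. Because $K(X_i, Y_\alpha) = \mu_i Y_\alpha$ with $\mu_i$ constant, and because $\hat\nabla_{X_i}Y_\alpha \in TM_2^{n_2}$ on which $K_{X_j}$ acts as $\mu_j\,\mathrm{Id}$, the right-hand side $(\hat\nabla_{X_i}K)(Y_\alpha, X_j)$ collapses to $-\sum_l \omega_j^{\,l}(X_i)\mu_l\,Y_\alpha$, a vector purely in $TM_2^{n_2}$; the left-hand side, however, lies entirely in $TM_1^{n_1}$. Both sides therefore vanish. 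Reading off the $TM_1^{n_1}$-components of the left-hand side with the help of the triangular form of $K|_{TM_1^{n_1}}$, the case $(i,j)=(1,2)$ produces the equation $(2\mu_1-\lambda_{1,1})\,\omega_2^{\,1}(Y_\alpha)=0$, and since $\lambda_{1,1}-2\mu_1=-(n+1)\mu_1\neq 0$ (from Lemma \ref{lm:3.5}), we conclude $\omega_2^{\,1}(Y_\alpha)=0$. An induction on the frame index, using $\mu_i\neq 0$ and $\lambda_{i,i}>0$ for $i<n_1$, sweeps the remaining components and yields $\omega_i^{\,j}(Y_\alpha)=0$ for all $i\le n_1-1$, all $j$, and all $\alpha$.

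For the $TM_1^{n_1}$-directions, I apply Codazzi to the triple $(X_k, X_i, X_i)$, together with the auxiliary triple $(X_k, X_i, Y_\alpha)$. The latter, by the same cancellation as above, yields the symmetry $\sum_l\omega_i^{\,l}(X_k)\mu_l = \sum_l\omega_k^{\,l}(X_i)\mu_l$. The former, expanded using Lemma \ref{lm:3.5}, produces a linear system on the $\omega_i^{\,l}(X_k)$ whose diagonal pivots involve the positive quantity $\lambda_{i,i}-2\mu_i=-(n-i+2)\mu_i>0$ (obtained from $\lambda_{i,i}=-(n-i)\mu_i$ and $\mu_i<0$). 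An induction on $i$, mirroring the induction on frame indices used in the proof of Lemma \ref{lm:3.5}, then propagates $\omega_i^{\,l}(X_k)=0$ for all $i\le n_1-1$ and all $k,l$.

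Finally, $X_{n_1}$ is, up to a sign, the unique unit vector field in $TM_1^{n_1}$ orthogonal to $X_1,\ldots,X_{n_1-1}$. Differentiating the relations $h(X_{n_1},X_j)=\delta_{n_1 j}$ for $j<n_1$ and using the already-established $\hat\nabla X_j=0$ gives $h(\hat\nabla_V X_{n_1}, X_j)=0$ for every $j\le n_1$, and since $\hat\nabla_V X_{n_1}\in TM_1^{n_1}$, this vector must vanish. The principal obstacle I anticipate is the combinatorial bookkeeping in the second step: the term $K_{X_i}X_i=\sum_{k<i}\mu_k X_k+\lambda_{i,i}X_i$ generates a proliferation of cross-terms in the Codazzi expansion. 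The strict positivity of $\lambda_{i,i}$ for $i<n_1$ serves as an invertible pivot that keeps the inductive systems solvable at every stage.
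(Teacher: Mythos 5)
Your proposal is correct and follows essentially the same path as the paper: apply the Codazzi equation \eqref{eqn:2.4} to triples drawn from the canonical frame of Lemma \ref{lm:3.5}, exploit the triangular structure of $K$ and the constancy of the $\mu_i$, induct on the frame index, and dispatch $X_{n_1}$ separately via orthonormality with the already-parallel $X_1,\ldots,X_{n_1-1}$. Your connection-form bookkeeping and the observation that the two sides of Codazzi for the mixed triple $(Y_\alpha,X_i,X_j)$ lie in the complementary parallel subbundles $TM_1^{n_1}$ and $TM_2^{n_2}$ (hence both vanish) tidily repackage the paper's component-by-component extraction across cases (i)--(v), but the pivots $\lambda_{i,i}-2\mu_i=-(n-i+2)\mu_i\neq 0$ and the inductive scheme coincide with the paper's argument.
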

\begin{proof}
We shall give the proof by induction on $i$. First of all, we prove
$\hat{\nabla}X_1=0$.

In fact, for $j\ge2$, applying \eqref{eqn:3.27}, we have the
following calculations
\begin{equation}\label{eqn:3.74}
(\hat{\nabla}_{X_j}K)(X_1,X_1)=(\lambda_{1,1}-2\mu_1)\hat{\nabla}_{X_j}X_1,
\end{equation}
\vskip -5mm
\begin{equation}\label{eqn:3.75}
(\hat{\nabla}_{X_1}K)(X_j,X_1)=\mu_1\hat{\nabla}_{X_1}X_j-K(\hat{\nabla}_{X_1}X_j,X_1)-K(\hat{\nabla}_{X_1}X_1,X_j).
\end{equation}

Now, the Codazzi equation
$(\hat{\nabla}_{X_j}K)(X_1,X_1)=(\hat{\nabla}_{X_1}K)(X_j,X_1)$
gives that
\begin{equation}\label{eqn:3.76}
(\lambda_{1,1}-2\mu_1)\hat{\nabla}_{X_j}X_1=\mu_1\hat{\nabla}_{X_1}X_j
-K(\hat{\nabla}_{X_1}X_j,X_1)-K(\hat{\nabla}_{X_1}X_1,X_j).
\end{equation}

Then, taking the component of \eqref{eqn:3.76} in direction of $X_1$
for each $j\ge2$ and using the fact that $h(\hat{\nabla}_{X_1}
X_1,Y)=0$ for $Y(q)\in T_qM_2^{n_2}$, and \eqref{eqn:3.27} again, we
get $\hat{\nabla}_{X_1} X_1=0$. Substituting $\hat{\nabla}_{X_1}
X_1=0$ into \eqref{eqn:3.76}, and then taking its component in
direction of $X_j$, we get $h(\hat{\nabla}_{X_j} X_1,X_k)=0$ for
$2\le j,k\le n_1$. This, together with the fact that
$h(\hat{\nabla}_{X_j} X_1,Y)=0$ for $Y(q)\in T_qM_2^{n_2}$, implies
that
\begin{equation}\label{eqn:3.76*}
\hat{\nabla}_{X_j} X_1=0,\ \ 1\le j\le
n_1.
\end{equation}

Take a unit vector field $Y$ with $Y(q)\in T_qM_2^{n_2}$. By a
direct calculation of
$h((\hat{\nabla}_{Y}K)(X_1,X_i),X_1)=h((\hat{\nabla}_{X_1}K)(Y,X_i),X_1)$,
we obtain $h(\hat{\nabla}_{Y} X_1,X_i)=0$ for $2\le i\le n_1$. This,
together with $h(\hat{\nabla}_{Y} X_1,Y')=0$ for $Y'(q)\in
T_qM_2^{n_2}$, implies that
\begin{equation}\label{eqn:3.76**}
\hat{\nabla}_{Y} X_1=0.
\end{equation}

Combining \eqref{eqn:3.76*} and \eqref{eqn:3.76**}, we have proved
the assertion $\hat{\nabla} X_1=0$.

\vskip 2mm

{\it Next, by induction we show that if for any fixed $2\le i \le
n_1-1$ satisfying
\begin{equation}\label{eqn:3.77}
\hat{\nabla}X_k=0,\ \ k=1,\ldots, i-1,
\end{equation}
then it holds $\hat{\nabla} X_i=0$.}

\vskip 2mm

To state a proof of the above second step, we consider five cases
below:

\vskip 2mm

(i) By \eqref{eqn:3.77} and that $h(X_k,X_l)=\delta_{kl}$, we
get
\begin{equation}\label{eqn:3.77add}
\begin{aligned}
&h(\hat{\nabla}_{X_j}X_i,X_k)=-h(\hat{\nabla}_{X_j}X_k,X_i)=0,\ \
1\le j\le n_1,\ k\le i\\ &h(\hat{\nabla}_YX_i,X_k)=-h(\hat{\nabla}_YX_k,X_i)=0,\ \
1\le k\le i\le n_1,\ .
\end{aligned}
\end{equation}

\vskip 2mm

(ii) For $j\le i-1$, by using \eqref{eqn:3.27}, \eqref{eqn:3.77} and
\eqref{eqn:3.77add}, we can show that
\begin{equation}\label{eqn:3.78}
\begin{aligned}
(\hat{\nabla}_{X_j}K)(X_i,X_i)&=\hat{\nabla}_{X_j}K(X_i,X_i)-2K(\hat{\nabla}_{X_j}X_i,X_i)\\
&=(\lambda_{i,i}-2\mu_i)\hat{\nabla}_{X_j}X_i,
\end{aligned}
\end{equation}
\vskip -2mm
\begin{equation}\label{eqn:3.79}
\begin{aligned}
(\hat{\nabla}_{X_i}K)(X_j,X_i)&=\hat{\nabla}_{X_i}K(X_j,X_i)
-K(\hat{\nabla}_{X_i}X_j,X_i)-K(\hat{\nabla}_{X_i}X_i,X_j)\\
&=\mu_j\hat{\nabla}_{X_i}X_i-K(\hat{\nabla}_{X_i}X_i,X_j).
\end{aligned}
\end{equation}

Then, by
$(\hat{\nabla}_{X_j}K)(X_i,X_i)=(\hat{\nabla}_{X_i}K)(X_j,X_i)$, for
$k\ge i+1$ we obtain
$$
\begin{aligned}
(\lambda_{i,i}-2\mu_i)h(\hat{\nabla}_{X_j}X_i,X_k)
&=\mu_jh(\hat{\nabla}_{X_i}X_i,X_k)-h(K(\hat{\nabla}_{X_i}X_i,X_j),X_k)\\
&=\mu_jh(\hat{\nabla}_{X_i}X_i,X_k)-h(\hat{\nabla}_{X_i}X_i,K_{X_j}X_k)=0.
\end{aligned}
$$
It follows that
\begin{equation}\label{eqn:3.80}
h(\hat{\nabla}_{X_j}X_i,X_k)=0,\ \ j\le i-1,\ k\ge i+1.
\end{equation}

\vskip 2mm

(iii) Similar to the above case (ii), for $j\ge i+1$, we have
\begin{equation}\label{eqn:3.81}
\begin{aligned}
(\hat{\nabla}_{X_j}K)(X_i,X_i)&=\hat{\nabla}_{X_j}K(X_i,X_i)-2K(\hat{\nabla}_{X_j}X_i,X_i)\\
&=(\lambda_{i,i}-2\mu_i)\hat{\nabla}_{X_j}X_i,
\end{aligned}
\end{equation}
\vskip -3mm
\begin{equation}\label{eqn:3.82}
\begin{aligned}
(\hat{\nabla}_{X_i}K)(X_j,X_i)&=\hat{\nabla}_{X_i}K(X_j,X_i)
-K(\hat{\nabla}_{X_i}X_j,X_i)-K(\hat{\nabla}_{X_i}X_i,X_j)\\
&=\mu_i\hat{\nabla}_{X_i}X_j-K(\hat{\nabla}_{X_i}X_j,X_i)-K(\hat{\nabla}_{X_i}X_i,X_j).
\end{aligned}
\end{equation}
Then, taking the $X_i$-components of $(\hat{\nabla}_{X_j}K)(X_i,X_i)
=(\hat{\nabla}_{X_i}K)(X_j,X_i)$, with using \eqref{eqn:3.27} and
\eqref{eqn:3.77}, we obtain
$$
\begin{aligned}
0&=(\lambda_{i,i}-2\mu_i)h(\hat{\nabla}_{X_j}X_i,X_i)\\
&=\mu_ih(\hat{\nabla}_{X_i}X_j,X_i)-h(K(\hat{\nabla}_{X_i}X_j,X_i),X_i)-h(K(\hat{\nabla}_{X_i}X_i,X_j),X_i)\\
&=-\mu_ih(\hat{\nabla}_{X_i}X_i,X_j)-h(\hat{\nabla}_{X_i}X_j,K_{X_i}X_i)-h(\hat{\nabla}_{X_i}X_i,K_{X_i}X_j)\\
&=(\lambda_{i,i}-2\mu_i)h(\hat{\nabla}_{X_i}X_i,X_j).
\end{aligned}
$$
Hence, we obtain
\begin{equation}\label{eqn:3.83}
h(\hat{\nabla}_{X_i}X_i,X_j)=0,\ j\ge i+1.
\end{equation}

\vskip 2mm

(iv) By using
$(\hat{\nabla}_{X_j}K)(X_i,X_i)=(\hat{\nabla}_{X_i}K)(X_j,X_i)$ and
taking its $X_k$-components for $j,k\ge i+1$, then applying
\eqref{eqn:3.83} we obtain
$$
\begin{aligned}
(\lambda_{i,i}-2\mu_i)h(\hat{\nabla}_{X_j}X_i,X_k)&=\mu_ih(\hat{\nabla}_{X_i}X_j,X_k)-h(K(\hat{\nabla}_{X_i}X_j,X_i),X_k)\\
&=\mu_ih(\hat{\nabla}_{X_i}X_j,X_k)-h(\hat{\nabla}_{X_i}X_j,K_{X_i}X_k)\\&=0.
\end{aligned}
$$
\vskip -3mm
\begin{equation}\label{eqn:3.84}
h(\hat{\nabla}_{X_j}X_i,X_k)=0,\ \ j,k\ge i+1.
\end{equation}

\vskip 2mm

(v) If $Y$ is a unit vector field with $Y(q)\in T_qM_2^{n_2}$, by a
direct calculation of
$h((\hat{\nabla}_{Y}K)(X_i,X_k),X_i)=h((\hat{\nabla}_{X_i}K)(Y,X_k),X_i)$
for $i+1\leq k$, we obtain
\begin{equation}\label{eqn:3.84*}
h(\hat{\nabla}_{Y} X_i,X_k)=0,\ \ i+1\leq k.
\end{equation}

For $Y,Y'$ with $Y(q),Y'(q)\in T_qM_2^{n_2}$, we have
$h(\hat{\nabla}_{X_j} X_i,Y)=h(\hat{\nabla}_Y X_i,Y')=0$. Hence,
combining \eqref{eqn:3.77add}, \eqref{eqn:3.80} and
\eqref{eqn:3.83}--\eqref{eqn:3.84*}, we finally get
\begin{equation}\label{eqn:3.85}
\hat{\nabla}X_i=0.
\end{equation}

Therefore, by induction we have proved that
\begin{equation}\label{eqn:3.850}
\hat{\nabla}X_i=0,\ \ 1\le i\le n_1-1.
\end{equation}

\vskip 2mm

Finally, for vector fields $X,Y$ with $X(q)\in
T_qM_1^{n_1},\,Y(q)\in T_qM_2^{n_2}$ and $k\le n_1-1$, from
\eqref{eqn:3.850} it is easily seen the following
$$
\begin{aligned}
&h(\hat{\nabla}_{X}X_{n_1},X_k)=-h(\hat{\nabla}_{X}X_k,X_{n_1})=0,\ h(\hat{\nabla}_{X}X_{n_1},X_{n_1})=0,\\
&h(\hat{\nabla}_YX_{n_1},X_k)=-h(\hat{\nabla}_YX_k,X_{n_1})=0,\
h(\hat{\nabla}_YX_{n_1},X_{n_1})=0,
\end{aligned}
$$
so that it holds also $\hat{\nabla}X_{n_1}=0$.

We have completed the proof of Lemma \ref{lm:3.6}.
\end{proof}


Moreover, we have the following further conclusion.

\begin{lem}\label{lm:3.7}
Let $x:M^{n}\rightarrow\mathbb{R}^{n+1}$ be an $n$-dimensional
locally strongly convex affine hypersphere such that \textbf{Case
$\mathfrak{C}_1$} in section \ref{sect:3} occurs, then the
difference tensor is parallel, i.e., $\hat{\nabla}K=0$.
\end{lem}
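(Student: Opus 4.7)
The plan is to verify $\hat{\nabla}K=0$ by direct computation, exploiting the rigid algebraic form of $K$ from Lemma~\ref{lm:3.5}, the parallelism $\hat{\nabla}X_i=0$ from Lemma~\ref{lm:3.6}, and the product structure which guarantees that $\hat{\nabla}_V$ preserves each distribution $TM_1^{n_1}$ and $TM_2^{n_2}$. The essential point is that the coefficients $\mu_i$ and $\lambda_{i,i}$ are \emph{constants}, so taking covariant derivatives of the explicit expressions for $K$ reduces everything to the parallelism of the frame $\{X_i\}$ and the scalar action of $K_{X_i}$ on vectors tangent to $M_2^{n_2}$.

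Concretely, I will split the verification into three cases. First, for $(\hat{\nabla}_V K)(X_i,X_j)$: by \eqref{eqn:3.27} the vector $K(X_i,X_j)$ is a constant linear combination of $X_1,\ldots,X_{n_1}$, so $\hat{\nabla}_V K(X_i,X_j)=0$; the remaining terms $K(\hat{\nabla}_V X_i,X_j)+K(X_i,\hat{\nabla}_V X_j)$ also vanish by Lemma~\ref{lm:3.6}. Second, for $(\hat{\nabla}_V K)(X_i,Y)$ with $Y$ a vector field tangent to $M_2^{n_2}$: since $K(X_i,Y)=\mu_i Y$ with $\mu_i$ constant and since $\hat{\nabla}_V Y$ remains tangent to $M_2^{n_2}$ (the product property), the relation $K_{X_i}(\hat{\nabla}_V Y)=\mu_i \hat{\nabla}_V Y$ from \eqref{eqn:3.27} gives
\begin{equation*}
(\hat{\nabla}_V K)(X_i,Y)=\mu_i\hat{\nabla}_V Y-K(\hat{\nabla}_V X_i,Y)-\mu_i\hat{\nabla}_V Y=0.
\end{equation*}
Third, for $(\hat{\nabla}_V K)(Y_1,Y_2)$ with $Y_1,Y_2$ tangent to $M_2^{n_2}$: polarizing Lemma~\ref{lm:3.4} yields $K(Y_1,Y_2)=h(Y_1,Y_2)\sum_{i=1}^{n_1}\mu_i X_i$. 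Because $\sum_i\mu_i X_i$ is $\hat{\nabla}$-parallel, differentiating produces only the $V(h(Y_1,Y_2))$ term, which is exactly cancelled by $K(\hat{\nabla}_V Y_1,Y_2)+K(Y_1,\hat{\nabla}_V Y_2)$ upon applying the same polarized Lemma~\ref{lm:3.4}, using that $\hat{\nabla}_V Y_1$ and $\hat{\nabla}_V Y_2$ remain in $TM_2^{n_2}$.

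By symmetry of $K$ these three cases exhaust all possibilities for $(\hat{\nabla}_V K)(W_1,W_2)$, so $\hat{\nabla}K=0$. I do not expect any genuine obstacle here; the only minor subtlety is remembering to invoke the product structure to ensure $\hat{\nabla}_V$ preserves the two distributions, which is what makes the scalar formulas for $K_{X_i}$ on $TM_2^{n_2}$ and for $K(Y_1,Y_2)$ applicable to $\hat{\nabla}_V Y$. All other steps are purely mechanical, powered by the constancy of the structure coefficients established in Lemma~\ref{lm:3.5}.
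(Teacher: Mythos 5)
Your proof is correct and spells out exactly the ``direct calculations'' the paper invokes: both arguments rest on the constancy of $\lambda_{i,i}$ and $\mu_i$ from Lemma~\ref{lm:3.5}, the parallelism $\hat{\nabla}X_i=0$ from Lemma~\ref{lm:3.6}, and the fact that the Levi--Civita connection of a Riemannian product preserves the two tangent distributions, which lets the scalar formulas for $K$ on $TM_2^{n_2}$ be applied to $\hat{\nabla}_V Y$. The paper phrases the $M_2$-side via a specially chosen frame with $\hat{\nabla}_{X_i}Y_\alpha=0$ (Proposition~56 of \cite{O}), whereas you work frame-free on $TM_2^{n_2}$ using the polarized form of Lemma~\ref{lm:3.4}; this is an equivalent, if slightly cleaner, way to package the same computation.
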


\begin{proof}
Let $\{X_1,\ldots,X_{n_1}\}$ be the local orthonormal vector fields
as described by Lemma \ref{lm:3.5}. Then Lemma
\ref{lm:3.6} shows that
\begin{equation}\label{eqn:3.86}
\hat{\nabla}X_i=0,\ \ 1\le i\le n_1.
\end{equation}

On the other hand, as $(M^n, h)=M_1^{n_1}(c_1)\times
M_2^{n_2}(c_2)$, by Proposition 56 in p.89 of \cite {O}, we can
choose local orthonormal vector fields $\{Y_1,\ldots,Y_{n_2}\}$ with
$Y_i(q)\in T_qM_2^{n_2}$, such that
\begin{equation}\label{eqn:3.87}
\hat{\nabla}_{X_i}Y_\alpha=0,\ \ 1\le
i\le n_1,\ 1\le \alpha\le n_2.
\end{equation}

Then, using \eqref{eqn:3.86}, \eqref{eqn:3.87} and properties of the
difference tensor established by Lemmas \ref{lm:3.2}, \ref{lm:3.4}
and \ref{lm:3.5}, direct calculations immediately give the assertion
that $\hat{\nabla}K=0$.
\end{proof}

\begin{them}\label{thm:3.1}
Let $x:M^n\rightarrow\mathbb{R}^{n+1}$ be an $n$-dimensional locally
strongly convex affine hypersphere such that \textbf{Case
$\mathfrak{C}_1$} in section \ref{sect:3} occurs. Then
$x:M^n\rightarrow\mathbb{R}^{n+1}$ is locally affinely equivalent to
the Calabi composition
\begin{equation}\label{eqn:3.1}
(x_1\cdots
x_{n_1})^2(x_{n+1}^2-x_{n_1+1}^2-\cdots-x_{n}^2)^{n_2+1}=1,
\end{equation}
where $(x_1,\ldots, x_{n+1})$ are the standard coordinates of
$\mathbb{R}^{n+1}$.
\end{them}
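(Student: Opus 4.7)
The plan is to combine Lemma 3.7 (which yields $\hat{\nabla}K=0$) with the explicit algebraic data from Lemmas 3.2, 3.4, 3.5, and then identify $(M^n,h,K,S)$ as the set of affine invariants of the displayed Calabi composition; local affine equivalence will then follow from the fundamental existence and uniqueness theorem of affine differential geometry.

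First I would gather what is known about the intrinsic and extrinsic structure. By Lemma 3.6 the distribution $V_1(q):=T_qM_1^{n_1}$ is spanned by a globally parallel orthonormal frame $\{X_1,\ldots,X_{n_1}\}$, so its complementary distribution $V_2(q):=T_qM_2^{n_2}$ is autoparallel as well; together they realise the Riemannian product decomposition. The difference tensor takes the block form given by Lemma 3.5 on $V_1$, the mixed form $K_{X_i}Y=\mu_iY$ on $V_1\otimes V_2$ from Lemma 3.2, and the form $K_{Y_\alpha}Y_\beta=\delta_{\alpha\beta}\sum_i\mu_iX_i$ on $V_2\otimes V_2$ from Lemma 3.4. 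The constants $\mu_i$ and $\lambda_{i,i}$ are completely pinned down by the apolarity condition and the relation $c_2=\tfrac{n+1}{n_2+1}H$ established in Lemma 3.4.

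Next I would match these invariants with those of the model Calabi composition. Writing the ambient space as $\mathbb{R}^{n_1}\oplus\mathbb{R}^{n_2+1}$, the model is built from two standard pieces: the flat hypersurface $x_1\cdots x_{n_1}=$ const, which is an improper affine hypersphere with parallel difference tensor whose non-zero components, in a suitable flat orthonormal frame, realise precisely the upper-triangular pattern of Lemma 3.5 on $V_1$; and the hyperboloid $x_{n+1}^2-x_{n_1+1}^2-\cdots-x_n^2=$ const, which is a proper hyperbolic affine hypersphere of constant negative curvature. Following the standard Calabi construction (see \cite{LSZH}), the composition of an $n_1$-dimensional improper and an $n_2$-dimensional proper hyperbolic affine hypersphere produces an affine hypersphere on $V_1\oplus V_2$ whose difference tensor coincides exactly with the one described above, provided the weights in the defining equation are chosen to satisfy apolarity. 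A routine computation shows that the unique choice is $2$ for the flat factor and $n_2+1$ for the hyperbolic factor, which gives (3.1).

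Finally, since our $M^n$ and the Calabi composition (3.1) are both locally strongly convex affine hyperspheres with the same affine metric, the same parallel difference tensor, and the same affine mean curvature $H=\tfrac{n_2+1}{n+1}c_2$, the fundamental theorem of affine hypersurface theory yields local affine equivalence. The principal technical obstacle lies in this last matching step: one has to perform the bookkeeping verification that the constants $\lambda_{i,i},\mu_i,H$ forced on our $M^n$ by Lemmas 3.2--3.5 and the apolarity condition agree with those of the model Calabi composition, and that the particular exponents $2$ and $n_2+1$ in (3.1) are the unique ones compatible with apolarity and with the normalization $S=H\,\mathrm{id}$.
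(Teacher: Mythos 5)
Your overall strategy -- compute the affine invariants $(h,K,S)$ forced by Lemmas \ref{lm:3.2}, \ref{lm:3.4}, \ref{lm:3.5}, \ref{lm:3.7}, compute the same invariants for the model \eqref{eqn:3.1}, and then invoke the uniqueness part of the fundamental theorem of affine hypersurface theory -- is a legitimate alternative to what the paper actually does. The paper instead applies Theorem 4.1 of \cite{HLSV} iteratively: using $X_1$ as the distinguished vector $e_1$ it decomposes $M^n$ as a Calabi product of a point with a hyperbolic affine sphere $G_2'$ of one lower dimension that again has parallel cubic form, then repeats with $X_2,\dots,X_{n_1}$, so that after $n_1$ steps the remaining factor has vanishing difference tensor and is therefore a hyperboloid; assembling the resulting explicit parametrization yields \eqref{eqn:3.1}. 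Your approach avoids the iteration but relocates rather than removes the work: the entire content of the proof becomes the ``routine computation'' you defer, namely that \eqref{eqn:3.1} carries an affine metric of the product type $M_1^{n_1}(0)\times M_2^{n_2}(c_2)$ with $c_2=\tfrac{n+1}{n_2+1}H$ and a difference tensor in the block-triangular normal form of Lemma \ref{lm:3.5}. That verification is not supplied, and it is exactly what \cite{HLSV}'s structure theorem packages for the authors.

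There is also a concrete error in your description of the model. The factor $\{x_1\cdots x_{n_1}=\mathrm{const}\}\subset\mathbb{R}^{n_1}$ is a \emph{proper} hyperbolic affine hypersphere (its affine normals pass through the origin), not an improper one; you appear to have conflated ``flat affine metric'' with ``improper.'' Moreover it is $(n_1-1)$-dimensional, so it cannot by itself ``realise precisely the upper-triangular pattern of Lemma \ref{lm:3.5} on $V_1$,'' which has rank $n_1$; the missing direction on $V_1$ is the Calabi-composition parameter. Correcting these points is essential before the invariant-matching argument can be made rigorous, and once corrected you would in effect be reproving the relevant part of Theorem 4.1 of \cite{HLSV}, so the paper's route is both shorter and more faithful to the existing literature.
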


\begin{proof}
By Lemma \ref{lm:3.5} and Lemma \ref{lm:3.7}, we can apply Theorem
4.1 of \cite{HLSV} with $X_1$ being regarded as $e_1$ there. Then
$x:M^n\rightarrow\mathbb{R}^{n+1}$ is a Calabi product of a point
$G_1$ with a hyperbolic affinesphere $G_2':\tilde
M_1^{n-1}\to\mathbb R^n$ with parallel cubic form and affine mean
curvature $H_2$, so that we have the decomposition
$M^n=I_1\times\tilde M_1^{n-1}$, $I\subset\mathbb R$, and the
parametrization
$$
x(s_1,\tilde p_1)=-\tfrac{\mu_1}{H^2+\mu_1^2}e^{-s_1}G_1
+\tfrac{1}{H^2+\mu_1^2}e^{s_1/n}G_2'(\tilde p_1),\ \ s_1\in I_1,\
\tilde p_1\in\tilde M_1^{n-1}.
$$
Moreover, the affine metric of $G_2':\tilde M_1^{n-1}\to\mathbb R^n$
is $(\mu^2_1-H)h|_{T\tilde M_1^{n-1}}$ (cf. \cite{HLSV}).

Notice that $T\tilde M_1^{n-1}={\rm
span}\{X_2,\ldots,X_{n_1},Y_1,\ldots,Y_{n_2}\}$. Let us denote by
$K^{1}$ the difference tensor of $G_2':\tilde M_1^{n-1}\to\mathbb
R^n$, then from the proof of Theorem 4.1 of \cite{HLSV} and Lemmas
\ref{lm:3.4} and \ref{lm:3.5}, we can derive that $K^{1}$ has the
expressions as follows:
\begin{equation}\label{eqn:3.88}
\left\{
\begin{aligned}
&K^1_{X_i}X_i=\mu_2 X_2+\cdots+\mu_{i-1} X_{i-1}+\lambda_{i,i}X_i,\ \ 2\le i\le n_1,\\
&K^1_{X_i}X_j=\mu_iX_j,\ \ 2\le i<j\le n_1,\\
&K^1_{X_i}Y_\alpha=\mu_iY_\alpha,\ \ 2\le i\le n_1,\ 2\le \alpha\le n_1,\\
&K^1_{Y_\alpha}Y_\beta=\delta_{\alpha\beta}(\mu_2X_2+\cdots+\mu_{n_1}X_{n_1}),\
\ 1\le \alpha,\beta\le n_2.
\end{aligned}
\right.
\end{equation}

Notice also that, up to scaling a constant multiple,
$\{X_2,\ldots,X_{n_1},Y_1,\ldots,Y_{n_2}\}$ are the orthomormal
basis of the affine metric of $G_2':\tilde M_1^{n-1}\to\mathbb R^n$.
Applying Theorem 4.1 in \cite{HLSV} once again by regarding $X_2$
as $e_1$ there, then $G_2':\tilde M_1^{n-1}\to\mathbb R^n$ is a
Calabi product of a point $G_2$ with a hyperbolic affinesphere
$G_3':\tilde M_2^{n-2}\to\mathbb R^{n-1}$ with parallel cubic form,
so that we have the decomposition $\tilde M_1^{n-1}=I_2\times\tilde
M_2^{n-2}$, $I_2\subset\mathbb R$, and the further parametrization
$$
\begin{aligned}
x(s_1,s_2,\tilde p_2)=&-\tfrac{\mu_1}{H^2+\mu_1^2}e^{-s_1}G_1
-\tfrac{1}{H^2+\mu_1^2}\tfrac{\mu_2}{H_2^2+\mu_2^2}e^{\tfrac{s_1}{n}-s_2}G_2\\
&+\tfrac{1}{H^2+\mu_1^2}\tfrac{1}{H_2^2+\mu_2^2}e^{\tfrac{s_1}{n}-\tfrac{s_2}{n-1}}G_3'(\tilde
p_2),\ \ (s_1,s_2)\in I_1\times I_2,\ \tilde p_2\in \tilde
M_2^{n-2}.
\end{aligned}
$$

Continuing in this way $n_1$ times, we finally see that
$M^n=M_1^{n_1}\times M_2^{n_2}$, with $M_1^{n_1}\cong I_1\times
I_2\times\cdots \times I_{n_1}$, and
$x:M^n\rightarrow\mathbb{R}^{n+1}$ has a parametrization
\begin{equation}\label{eqn:3.89}
\begin{aligned}
x(s_1,&\ldots,s_{n_1},p_2)=-\tfrac{\mu_1}{H^2+\mu_1^2}e^{-s_1}G_1-\tfrac{1}{H^2+\mu_1^2}\tfrac{\mu_2}{H_2^2+\mu_2^2}e^{\tfrac{s_1}{n_1+n_2}-s_2}G_2-\cdots\\
&-\tfrac{1}{H^2+\mu_1^2}\tfrac{1}{H_2^2+\mu_2^2}\cdots\tfrac{1}{H_{n_1-1}^2+\mu_{n_1-1}^2}\tfrac{\mu_{n_1}}{H_{n_1}^2+\mu_{n_1}^2}
e^{\tfrac{s_1}{n_1+n_2}+\cdots+\tfrac{s_{n_1-1}}{n_2+2}-s_{n_1}}G_{n_1}\\
&+\tfrac{1}{H^2+\mu_1^2}\tfrac{1}{H_2^2+\mu_2^2}\cdots\tfrac{1}{H_{n_1}^2+\mu_{n_1}^2}
e^{\tfrac{s_1}{n_1+n_2}+\cdots+\tfrac{s_{n_1}}{n_2+1}}G_{n_1+1}'(p_2),\
\ p_2\in M_2^{n_2},
\end{aligned}
\end{equation}
where, $(s_1,\ldots,s_{n_1})\in M_1^{n_1}$, $\{G_i\}_{1\le i\le n_1}$ are constant
vectors and $G_{n_1+1}':M_2^{n_2}\to\mathbb{R}^{n_2+1}$ is a hyperbolic
affine hypersphere with parallel cubic form.

Furthermore, from the above procedure of induction, it can be easily
seen that $G_{n_1+1}':M_2^{n_2}\to\mathbb{R}^{n_2+1}$ has vanishing
difference tensor. This implies that
$G_{n_1+1}':M_2^{n_2}\to\mathbb{R}^{n_2+1}$ is a hyperboloid.
Therefore, up to an affine transformation, there exist constant
vectors $G_{n_1+1},\ldots,G_{n+1}$ such that
\begin{equation}\label{eqn:3.89add}
G_{n_1+1}'=y_1G_{n_1+1}+y_2G_{n_1+2}+\cdots+y_{n_2+1}G_{n+1},
\end{equation}
where $y_1^2+\cdots+y_{n_2}^2-y_{n_2+1}^2=-1$.

Combining \eqref{eqn:3.89} and \eqref{eqn:3.89add}, we finally see
that, up to an affine transformation, $x:M^n=M_1^{n_1}\times
M_2^{n_2}\to\mathbb{R}^{n+1}$ can be written as
$$
\begin{aligned}
x&=(x_1,\ldots,x_{n_1},x_{n_1+1},\ldots,x_{n+1})\\
&=\Big(e^{-s_1},e^{\tfrac{s_1}{n}-s_2},\ldots,e^{\tfrac{s_1}{n_1+n_2}+\cdots+\tfrac{s_{n_1-1}}{n_2+2}-s_{n_1}},
e^{\tfrac{s_1}{n_1+n_2}+\cdots+\tfrac{s_{n_1}}{n_2+1}}(y_1,\ldots,y_{n_2+1})\Big).
\end{aligned}
$$

Hence, $x:M^n\to\mathbb{R}^{n+1}$ is affinely equivalent to the
affine hypersphere \eqref{eqn:3.1}.
\end{proof}

\vskip 2mm

Next, we consider \textbf{Case $\mathfrak{C}_2$} as stated in
section \ref{sect:3} such that $x:M^n\rightarrow\mathbb{R}^{n+1}$ is
an $n$-dimensional locally strongly convex affine hypersphere with
$(M^n, h)=M_1^{n_1}(c_1)\times M_2^{n_2}(c_2)$, $n_1\ge2,\ n_2\ge2$
and $c_1c_2\neq0$. Then, similar to that in Lemma \ref{lm:3.2} for
the proof of \eqref{eqn:3.7}, we can obtain the following result.

\begin{lem}\label{lm:3.8}
For $p\in M_1^{n_1}\times M_2^{n_2}$, let $\{X_i\}_{1\le i\le n_1}$
and $\{Y_j\}_{1\le j\le n_2}$ be orthonormal bases of $T_pM_1^{n_1}$
and $T_pM_2^{n_2}$, respectively. Then, in \textbf{Case
$\mathfrak{C}_2$}, the difference tensor satisfies
\begin{equation}\label{eqn:3.1300}
\left\{
\begin{aligned}
&h(K_{X_i}X_j,Y_\gamma)=0,\ \ 1\le
i,j\le n_1,\ \ 1\le \gamma\le n_2,\\
&h(K_{Y_\alpha}Y_\beta,X_k)=0,\ \ 1\le \alpha,\beta \le n_2,\ \ 1\le
k\le n_1.
\end{aligned}
\right.
\end{equation}
\end{lem}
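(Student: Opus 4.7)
My plan is to prove Lemma \ref{lm:3.8} by imitating the derivation of \eqref{eqn:3.7} in Lemma \ref{lm:3.2}, applying the cyclic identity \eqref{eqn:2.9} with carefully chosen tangent vectors and exploiting the product structure of $\hat R$ given by \eqref{eqn:2.10}. The key preliminary observation is that, because $X_i$ has zero $M_2^{n_2}$-component and $Y_\alpha$ has zero $M_1^{n_1}$-component, formula \eqref{eqn:2.10} immediately gives
\begin{equation*}
\hat R(X_i,Y_\alpha)=0,\quad \hat R(X_i,X_j)Y_\alpha=0,\quad \hat R(Y_\alpha,Y_\beta)X_i=0,
\end{equation*}
and moreover $\hat R(X_i,X_j)V = c_1\bigl[h(X_j,V_1)X_i - h(X_i,V_1)X_j\bigr]$ and $\hat R(Y_\alpha,Y_\beta)V = c_2\bigl[h(Y_\beta,V_2)Y_\alpha - h(Y_\alpha,V_2)Y_\beta\bigr]$ for arbitrary $V$, where $V_1,V_2$ denote the components on $T_pM_1^{n_1},T_pM_2^{n_2}$ respectively.

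For the first identity $h(K_{X_i}X_j,Y_\gamma)=0$, I would substitute $X=Y_\alpha$, $Y=X_i$, $Z=X_j$, $W=Y_\beta$ into \eqref{eqn:2.9}. Four of the six terms vanish because they contain a factor of the form $\hat R(X_i,Y_\alpha)$, $\hat R(Y_\alpha,X_i)$, or $\hat R(Y_\beta,Y_\alpha)X_j$. What remains is
\begin{equation*}
0=\hat R(Y_\beta,Y_\alpha)K(X_i,X_j)=c_2\bigl[h(Y_\alpha,K(X_i,X_j)_2)Y_\beta-h(Y_\beta,K(X_i,X_j)_2)Y_\alpha\bigr].
\end{equation*}
Taking the $Y_\alpha$-component for $\alpha\ne\beta$ (possible since $n_2\ge 2$) and using $c_2\ne 0$ yields $h(K_{X_i}X_j,Y_\beta)=0$ for every $\beta$, which is the first relation.

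For the second identity $h(K_{Y_\alpha}Y_\beta,X_k)=0$ I would proceed symmetrically: substitute $X=X_k$, $Y=Y_\alpha$, $Z=Y_\beta$, $W=X_i$ (with $i\ne k$) into \eqref{eqn:2.9}. Again four terms drop out by the observations above (now it is $\hat R(X_k,X_i)Y_\beta=0$ and the two mixed curvatures that vanish), leaving
\begin{equation*}
0=\hat R(X_i,X_k)K(Y_\alpha,Y_\beta)=c_1\bigl[h(X_k,K(Y_\alpha,Y_\beta)_1)X_i-h(X_i,K(Y_\alpha,Y_\beta)_1)X_k\bigr].
\end{equation*}
Picking $i\ne k$ (possible since $n_1\ge 2$) and using $c_1\ne 0$ gives $h(K_{Y_\alpha}Y_\beta,X_k)=0$.

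There is no real obstacle here: once the observations on vanishing of mixed curvature terms are noted, the proof is a direct two-line application of \eqref{eqn:2.9} and mirrors the argument used for \eqref{eqn:3.7}. The only subtlety requiring attention is bookkeeping the six terms of the cyclic identity \eqref{eqn:2.9} to verify that precisely the right ones vanish under each choice of four vectors, and confirming that the hypotheses $c_1\ne 0$, $c_2\ne 0$, $n_1\ge 2$, $n_2\ge 2$ are all used to conclude.
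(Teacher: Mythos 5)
Your argument is correct and is exactly the route the paper intends: it just states that the result follows ``similar to that in Lemma~\ref{lm:3.2} for the proof of~\eqref{eqn:3.7},'' and you have spelled this out by plugging the right quadruples into the cyclic identity~\eqref{eqn:2.9}, killing the mixed-curvature and $\hat R(M_2)$-on-$TM_1$ (resp.\ $\hat R(M_1)$-on-$TM_2$) terms, and using $c_1\neq0$, $c_2\neq0$, $n_1\ge2$, $n_2\ge2$ to extract the vanishing of the cross components of $K$. The only cosmetic slip is the count ``four of the six terms vanish'' --- in fact five of the six vanish, leaving only $\hat R(Y_\beta,Y_\alpha)K(X_i,X_j)$ (resp.\ $\hat R(X_i,X_k)K(Y_\alpha,Y_\beta)$), which is precisely the term you keep, so the conclusion is unaffected.
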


Moreover, we have the following further conclusion.

\begin{them}\label{thm:4.2}
Let $x:M^n\rightarrow\mathbb{R}^{n+1}$ be a locally strongly convex
product affine hypersphere, then \textbf{Case $\mathfrak{C}_2$} in
section \ref{sect:3} does not occur.
\end{them}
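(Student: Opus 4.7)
The plan is to argue that the hypothesis of Case $\mathfrak{C}_2$ forces the factor $M_1^{n_1}(c_1)$ to carry the data of an improper affine hypersphere of constant nonzero sectional curvature, which is ruled out by the Vrancken--Li--Simon classification \cite{VLS}.

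First I combine Lemma \ref{lm:3.8} with the total symmetry of the cubic form $h(K(\cdot,\cdot),\cdot)$. The identities $h(K(X_i, Y_\alpha), X_j) = h(K(X_i, X_j), Y_\alpha) = 0$ and $h(K(X_i, Y_\alpha), Y_\beta) = h(K(Y_\alpha, Y_\beta), X_i) = 0$ give $K(X_i, Y_\alpha) = 0$ for every $X_i \in T_pM_1^{n_1}$ and $Y_\alpha \in T_pM_2^{n_2}$; hence $K$ has no mixed components and decomposes as $K^1 \oplus K^2$ with $K^i$ supported on $TM_i \otimes TM_i$ and taking values in $TM_i$. Plugging $X = Z = X_i$ and $Y = Y_\alpha$ (unit orthogonal) into the Gauss equation \eqref{eqn:2.3}, using $\hat R(X_i, Y_\alpha) Y_\alpha = 0$ from \eqref{eqn:2.10}, and observing that $[K_{X_i}, K_{Y_\alpha}] Y_\alpha = 0$ (since $K_{X_i} Y_\alpha = 0$ and $K_{X_i}$ annihilates $TM_2 \ni K_{Y_\alpha} Y_\alpha$), I obtain $H = 0$. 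So $M^n$ is an improper affine hypersphere.

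Next, on the factor $M_1^{n_1}(c_1)$, the restricted data $(h|_{TM_1}, K^1)$ satisfies the integrability conditions of a locally strongly convex affine hypersphere with affine mean curvature $\tilde H_1 = 0$: the Gauss identity $[K^1_X, K^1_Y] Z = -c_1[h(Y,Z)X - h(X,Z)Y]$ for $X, Y, Z \in TM_1$ follows from \eqref{eqn:2.3} with $H = 0$ combined with \eqref{eqn:2.10}; Codazzi is inherited from \eqref{eqn:2.4} restricted to $TM_1$, since the product Levi-Civita connection preserves $TM_1$; apolarity ${\rm trace}\,K^1_X = 0$ follows from \eqref{eqn:2.7} since $K_X$ acts trivially on $TM_2$; and cubic symmetry is immediate. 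By the fundamental theorem of affine hypersurfaces this realizes $M_1^{n_1}$ locally as an improper, locally strongly convex affine hypersphere of constant sectional curvature $c_1$. The Vrancken--Li--Simon classification \cite{VLS} then asserts that any such improper hypersphere must be an elliptic paraboloid, forcing $c_1 = 0$ and contradicting $c_1 \neq 0$. This rules out Case $\mathfrak{C}_2$.

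The main obstacle in this plan is the step invoking the fundamental theorem of affine hypersurfaces, since one needs the abstract integrability data on $M_1$ to actually realize $M_1^{n_1}$ as an affine hypersphere so that the classification applies. A classification-free alternative would be to contract the Gauss equation to obtain $\sum_i (K^1_{e_i})^2 = c_1(n_1-1)\,\mathrm{id}$ and combine it with the Codazzi system on a space of constant curvature $c_1 \neq 0$ to rule out any nontrivial solution directly; but the VLS-based route is considerably cleaner.
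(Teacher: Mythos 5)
Your argument is correct, and it follows a genuinely different path from the one in the paper. You start from Lemma~\ref{lm:3.8} and the total symmetry of the cubic form to get $K(X_i,Y_\alpha)=0$, so $K$ splits as $K^1\oplus K^2$; then one application of the Gauss equation with $X=Z\in TM_1$, $Y\in TM_2$ gives $H=0$; then you realize the factor $(M_1^{n_1},h|_{TM_1},K^1)$ via the fundamental theorem of affine hypersurfaces as a locally strongly convex \emph{improper} affine hypersphere with constant affine sectional curvature $c_1\neq 0$, which contradicts the classification in \cite{VLS} (the only non-quadric example there, $x_1\cdots x_{n+1}=1$, is hyperbolic with $H<0$, while an improper quadric is an elliptic paraboloid and has $c=0$). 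The paper's proof is more computational and self-contained: it invokes Lemma~\ref{lm:3.1}, reruns the first induction step of Lemma~\ref{lm:3.5} to produce a local frame on $TM_1$ with $K_{X_1}X_1=\lambda_1X_1$, $K_{X_1}X_i=\lambda_2X_i$, uses the Codazzi argument that led to~\eqref{3.26} to conclude $\hat\nabla_{X_i}X_1=0$, and then reads off $\hat R(X_1,X_2)X_1=0$, contradicting $c_1\neq0$ directly without ever establishing $H=0$ or invoking the fundamental theorem. Your approach is more conceptual and exposes the geometric content clearly (the mean curvature must vanish and the factor becomes an improper hypersphere on its own), at the cost of appealing to the existence theorem and to \cite{VLS}; the paper's route stays inside the frame machinery it has already built. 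One small thing worth being explicit about if you use your version: the fundamental theorem needs simple connectivity (harmless here since the statement is local) and the apolarity check should mention that $\operatorname{trace}K^1_X=\operatorname{trace}K_X=0$ precisely because $K_X$ annihilates $TM_2$, which you do.
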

\begin{proof}
If otherwise, we assume that \textbf{Case $\mathfrak{C}_2$} does
occur. Then, as by Lemma \ref{lm:3.1} the difference tensor $K$
vanishes nowhere, we may assume that for an arbitrary fixed $p\in
M^n=M_1^{n_1}\times M_2^{n_2}$ there exists $X\in T_pM_1^{n_1}$ such
that $K_{X}\neq0$. Now, similar to the proof for the first step of
induction in the proof of Lemma \ref{lm:3.5}, we can show that
around $p\in M^n$ there exist local orthonormal vector fields
$\{X_1,\ldots,X_{n_1}\}$ with $X_i(q)\in T_qM_1^{n_1},1\le i\le
n_1$, such that the difference tensor takes the form
\begin{equation}\label{eqn:3.91}
K_{X_1}X_1=\lambda_1X_1,\ K_{X_1}X_i=\lambda_2X_i,\ \ 2\le i\le n_1,
\end{equation}
where, $\lambda_1$ and $\lambda_2$ are real numbers with
$\lambda_1>0$ and $\lambda_1+(n-1)\lambda_2=0$. Then, similar to the
proof of \eqref{3.26}, we can show that $\hat{\nabla}_{X_i}X_1=0$
for $1\le i\le n_1$. It follows that $\hat{R}(X_1,X_2)X_1=0$, which
is a contradiction to that $c_1c_2\neq0$.
\end{proof}


\noindent{\it The Completion of Theorem \ref{thm:1.1}'s Proof.}

If $c_1=c_2=0$, it follows from \eqref{eqn:2.10} that $(M^n,h)$ is
flat. Then, according to the result of \cite{VLS}, we get the
assertion (i) of Theorem \ref{thm:1.1}.

If $c_1^2+c_2^2\neq0$, we have two cases: \textbf{Case
$\mathfrak{C}_1$} and \textbf{Case $\mathfrak{C}_2$}, as preceding
described.

If \textbf{Case $\mathfrak{C}_1$} occurs, then by Theorem
\ref{thm:3.1}, we obtain the hypersphere as stated in (ii) of
Theorem \ref{thm:1.1}. Moreover, according to Theorem \ref{thm:4.2},
\textbf{Case $\mathfrak{C}_2$} does not occur.

We have completed the proof of Theorem \ref{thm:1.1}. \qed

\vskip 3mm

Next, we come to give the proof of Theorem \ref{thm:1.2}. First of
all, similar to the proof of Lemma \ref{lm:3.1}, we can obtain the
following result.

\begin{lem}\label{lm:3.9}
Let $x:M^n\rightarrow\mathbb{R}^{n+1}\ (n\ge3)$ be a locally
strongly convex affine hypersphere such that $(M^n,h)$ is locally
isometric to a Riemannian product $\mathbb{R}\times M_2^{n-1}(c_2)$,
where $M_2^{n-1}(c_2)$ is an $(n-1)$-dimensional Riemannian manifold
with constant sectional curvature $c_2\neq 0$. Then the difference
tensor $K$ of $x:M^n\rightarrow\mathbb{R}^{n+1}$ vanishes nowhere.
\end{lem}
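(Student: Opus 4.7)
The plan is to mimic exactly the strategy of Lemma \ref{lm:3.1}, adapted to the degenerate case $n_1=1$, $c_1=0$. The only structural difference is that $T_pM_1^{n_1}$ is one-dimensional, so I cannot pick two orthogonal unit vectors inside the first factor; however, since $n\ge 3$ gives $n_2=n-1\ge 2$, there is still room to pick two orthogonal unit vectors inside the second factor, and this is the only place the argument of Lemma \ref{lm:3.1} actually needed two vectors in one factor.

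More precisely, I would argue by contradiction: suppose $K$ vanishes at some point $p=(p_1,p_2)\in\mathbb{R}\times M_2^{n-1}(c_2)$. Substituting $K=0$ into the Gauss equation \eqref{eqn:2.3} yields
\begin{equation*}
\hat{R}(X,Y)Z=H\bigl[h(Y,Z)X-h(X,Z)Y\bigr]\quad\text{at }p,
\end{equation*}
so $(M^n,h)$ has constant sectional curvature $H$ at $p$. First I would take a unit vector $X\in T_p(\mathbb{R})$ together with a unit vector $Y=Z\in T_{p_2}M_2^{n-1}$; comparing the expression above with the product formula \eqref{eqn:2.10} (whose mixed-factor term vanishes because one factor is flat and the two vectors lie in different factors) immediately forces $H=0$.

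Next, using $n-1\ge 2$, I would select two orthonormal vectors $X,Y=Z\in T_{p_2}M_2^{n-1}$ with $X\perp Y$. Plugging into the same two expressions for $\hat R$, the product formula \eqref{eqn:2.10} gives sectional curvature $c_2$, while the Gauss equation (with $K=0$ and $H=0$) gives $0$. Hence $c_2=0$, contradicting the hypothesis $c_2\neq 0$. Therefore no such $p$ exists and $K$ vanishes nowhere.

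I do not anticipate any real obstacle: the whole argument is a direct transcription of Lemma \ref{lm:3.1} in which the role previously played by two orthonormal vectors in the first factor is simply dropped (it was only used to deduce $c_1=0$, which is already assumed here as $c_1=0$ trivially). The only point worth flagging in the write-up is the dimensional hypothesis $n\ge 3$, which is exactly what is needed to guarantee $n_2\ge 2$ so that the second step of the contradiction can be carried out.
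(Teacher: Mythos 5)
Your proposal is correct and is essentially the paper's argument: the paper proves Lemma \ref{lm:3.9} simply by noting it is ``similar to the proof of Lemma \ref{lm:3.1}'', and your adaptation (using the mixed plane to force $H=0$, then a plane inside $M_2^{n-1}$ with $n-1\ge2$ to force $c_2=0$) is exactly the intended transcription. The remark that the step requiring two orthonormal vectors in the first factor is only needed to derive $c_1=0$, which is vacuous here, correctly identifies why the degenerate case $n_1=1$ causes no difficulty.
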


Next, similar to the proofs of \eqref{eqn:3.4} and \eqref{eqn:3.5},
we have

\begin{lem}\label{lm:3.10}
Let $x:M^n\rightarrow\mathbb{R}^{n+1}$ be a locally strongly convex
affine hypersphere as described in Lemma \ref{lm:3.9}. For $p\in M^n$, assume that
$\{Y_\alpha\}_{1\le\alpha\le n-1}$ is an orthonormal basis of
$T_pM_2^{n-1}$ and $X\in T_p\mathbb{R}$ is a unit vector, then we
have
\begin{equation}\label{eqn:3.92}
K_{X}Y_\alpha=\mu(X)Y_\alpha,\ \ 1\le \alpha\le n_2,
\end{equation}
where $\mu(X)=:\mu$ depends only on $X$.
\end{lem}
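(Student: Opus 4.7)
The plan is to mimic the proof of Lemma \ref{lm:3.2} almost verbatim, exploiting that in the present situation $c_1=0$ and $M_1^{n_1}=\mathbb R$ is one-dimensional. The main tool is again the Tsinghua-type identity \eqref{eqn:2.9} combined with the product form \eqref{eqn:2.10} of the curvature tensor.

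Fix $\alpha\neq\beta$ in $\{1,\ldots,n-1\}$. I would first evaluate \eqref{eqn:2.9} on the four vectors $(W,X,Y,Z)=(Y_\beta,X,Y_\alpha,Y_\beta)$, where I keep the letter $X$ for the given unit vector of $T_p\mathbb R$. Because $X$ has zero $M_2$-component and $c_1=0$, every term containing $\hat R(\cdot,X)$ drops out, and the identity reduces to the exact analogue of \eqref{eqn:3.3}, namely
$$0=c_2\sum_{m=1}^{n-1}\bigl(\delta_{\beta m}Y_\alpha-\delta_{\alpha m}Y_\beta\bigr)h(K_XY_\beta,Y_m)-c_2K_XY_\alpha.$$
Projecting this vector equation onto $Y_\beta$ and onto $Y_\alpha$, exactly as in the derivation of \eqref{eqn:3.4} and \eqref{eqn:3.5}, yields $h(K_XY_\alpha,Y_\beta)=0$ for $\alpha\neq\beta$ together with $h(K_XY_\alpha,Y_\alpha)=h(K_XY_\beta,Y_\beta)$; I would call this common value $\mu(X)$.

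Next I still have to show that $K_XY_\alpha$ has no $T_p\mathbb R$-component. By the total symmetry of the cubic form $h(K(\cdot,\cdot),\cdot)$ this is equivalent to $h(K(X,X),Y_\alpha)=0$ for every $\alpha$. To obtain it I would apply \eqref{eqn:2.9} a second time, now on the quadruple $(W,X,Y,Z)=(Y_\beta,Y_\alpha,X,X)$. All terms involving $\hat R(Y_\alpha,X)$ or $\hat R(X,Y_\beta)$ vanish, and $\hat R(Y_\beta,Y_\alpha)X=0$ as well because $X$ has no $M_2$-component; hence the whole identity collapses to $\hat R(Y_\beta,Y_\alpha)K(X,X)=0$. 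Expanding with \eqref{eqn:2.10} and letting the pair $(\alpha,\beta)$ vary forces the $M_2$-component of $K(X,X)$ to be orthogonal to every $Y_\gamma$, hence zero.

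Combining the two steps gives $K_XY_\alpha=\mu(X)Y_\alpha$, with $\mu(X)$ depending only on $X$ since it is produced as the common eigenvalue of $K_X$ restricted to $T_pM_2^{n-1}$ and is therefore basis-independent. I do not anticipate any real obstacle: the only delicate point is that the proof of Lemma \ref{lm:3.2} also needed \eqref{eqn:3.7}, whose derivation used two distinct vectors $X_i,X_j\in T_pM_1^{n_1}$, a move unavailable when $n_1=1$. The second Tsinghua substitution described above is the natural replacement, and it works for free precisely because $c_1=0$ kills all the mixed curvature terms.
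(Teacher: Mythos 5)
Your proof is correct and follows essentially the same approach as the paper's. Both Tsinghua substitutions you describe are exactly those used in Lemma~\ref{lm:3.2}: the first reproduces \eqref{eqn:3.3}--\eqref{eqn:3.5}, and the second, $(W,X,Y,Z)=(Y_\beta,Y_\alpha,X,X)$, is literally the substitution that yields \eqref{eqn:3.6} specialized to $i=j=1$, so the concern about needing two distinct vectors $X_i,X_j$ is unfounded (the constraint there is $\alpha\neq\beta$, not $i\neq j$, and $n-1\ge2$). The only expository difference is organizational: the paper defers the vanishing of the $T_p\mathbb R$-component of $K_XY_\alpha$ (the analogue of \eqref{eqn:3.7}) to the proof of Lemma~\ref{lm:3.11}, whereas you absorb it directly into Lemma~\ref{lm:3.10}, which is arguably cleaner since the lemma as stated asserts the full vector equation $K_XY_\alpha=\mu(X)Y_\alpha$.
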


Now, we will prove a lemma which plays the same important role as
Lemma \ref{lm:3.5}.

\begin{lem}\label{lm:3.11}
Let $x:M^n\rightarrow\mathbb{R}^{n+1}$ be a locally strongly convex
affine hypersphere as described in Lemma \ref{lm:3.9} with
$S=H\,{\rm id}$. Then, around any point $p\in M^n$, there exists a
local orthonormal frame $\{X_1,Y_1,\ldots,Y_{n-1}\}$ on $M^n$ with
$X_1(q)\in T_q\mathbb{R}$ and $Y_\alpha(q)\in T_qM_2^{n-1},\
1\le\alpha\le n-1$, such that the difference tensor of
$x:M^n\rightarrow\mathbb{R}^{n+1}$ takes the following form
\begin{equation}\label{eqn:3.93}
\left\{
\begin{aligned}
&K_{X_1}X_1=(n-1)\sqrt{-\tfrac Hn}\, X_1,\ K_{X_1}Y_\alpha=-\sqrt{-\tfrac Hn}\, Y_\alpha,\ \ 1\le \alpha\le n-1,\\
&K_{Y_\alpha}Y_\beta=-\sqrt{-\tfrac Hn}\,\delta_{\alpha\beta} X_1,\
\ 1\le \alpha,\beta\le n-1.
\end{aligned}
\right.
\end{equation}
Moreover, we have $c_2=(n+1)H/n<0$ and $\hat{\nabla}K=0$.
\end{lem}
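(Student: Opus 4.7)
The plan is to specialize the strategy used for Case $\mathfrak{C}_1$ in Section 3 to the situation $n_1=1$, $n_2=n-1$, handling the few places where the argument needs to be adapted because the first factor is one-dimensional.

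First I would fix a local unit vector field $X_1$ tangent to the $\mathbb{R}$-factor. By Lemma \ref{lm:3.10}, $K_{X_1}Y=\mu Y$ for every $Y$ tangent to $M_2^{n-1}$; by total symmetry of the cubic form, $h(K_{X_1}X_1,Y)=h(K_{X_1}Y,X_1)=0$, so $K_{X_1}X_1=\lambda X_1$ for some scalar $\lambda$, and apolarity ${\rm trace}\,K_{X_1}=0$ forces $\lambda=-(n-1)\mu$. Next, the Gauss equation \eqref{eqn:2.3} with $X=Z=X_1$ and $Y$ a unit vector in $T_pM_2^{n-1}$ gives, via \eqref{eqn:2.10}, the identity $0=\hat R(X_1,Y)X_1=(-H+\lambda\mu-\mu^2)Y$, hence $\mu(\lambda-\mu)=H$. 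Substituting $\lambda=-(n-1)\mu$ yields $\mu^2=-H/n$, so $H<0$ and $\mu=\pm\sqrt{-H/n}$. After replacing $X_1$ by $-X_1$ if needed, I choose the sign so that $\mu=-\sqrt{-H/n}$ and $\lambda=(n-1)\sqrt{-H/n}$.

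The core obstacle is showing that the tangential components $\theta_{\alpha\beta}^\gamma:=h(K_{Y_\alpha}Y_\beta,Y_\gamma)$ all vanish; the $X_1$-component is already determined by symmetry, $h(K_{Y_\alpha}Y_\beta,X_1)=h(K_{X_1}Y_\beta,Y_\alpha)=\mu\delta_{\alpha\beta}$. This step is the analog of Lemma \ref{lm:3.4}. I would argue by contradiction, following that proof almost verbatim: if some $\theta_{\alpha\beta}^\gamma\ne0$, the Ejiri maximization principle produces a unit vector $\bar Y_1\in T_pM_2^{n-1}$ at which $f_1(Y):=h(K_YY,Y)$ attains a positive maximum $\theta_1$; diagonalizing the self-adjoint operator $\mathcal{A}(Y):=K_{\bar Y_1}Y-h(K_{\bar Y_1}Y,X_1)X_1$ yields an orthonormal basis $\{\bar Y_i\}$ with $K_{\bar Y_1}\bar Y_1=\mu X_1+\theta_1\bar Y_1$ and $K_{\bar Y_1}\bar Y_i=\theta_i\bar Y_i$. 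The Gauss equation at $(\bar Y_1,\bar Y_i)$, the second-variation inequality $\theta_1\ge 2\theta_i$, and apolarity determine $\theta_1$ by an explicit formula of the type \eqref{eqn:3.20}. The implicit-function-theorem extension of $\bar Y_1$ to a local unit vector field, combined with the Codazzi identity
\[
h((\hat{\nabla}_{\bar Y_i}K)(\bar Y_1,\bar Y_1),\bar Y_1)=h((\hat{\nabla}_{\bar Y_1}K)(\bar Y_i,\bar Y_1),\bar Y_1),
\]
then forces $\hat{\nabla}_{\bar Y_1}\bar Y_1=0$ and eventually $\hat{\nabla}_{\bar Y_i}\bar Y_1=0$ for all $i$, so that $c_2=h(\hat R(\bar Y_2,\bar Y_1)\bar Y_1,\bar Y_2)=0$, contradicting the hypothesis $c_2\ne 0$.

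Once $K$ has the stated form, the identity $c_2=(n+1)H/n$ is read off from \eqref{eqn:2.3} with $X=Y_2$, $Y=Z=Y_1$ together with \eqref{eqn:2.10}, exactly as in the last paragraph of Lemma \ref{lm:3.4}. Finally, to establish $\hat{\nabla}K=0$ I would mimic Lemmas \ref{lm:3.6} and \ref{lm:3.7}: choose $Y_1,\dots,Y_{n-1}$ so that $\hat{\nabla}_{X_1}Y_\alpha=0$ (Proposition 56 of \cite{O}); apply the Codazzi equation \eqref{eqn:2.4} to triples $(X_1,X_1,X_1)$, $(Y_\alpha,X_1,X_1)$, and $(Y_\alpha,Y_\beta,X_1)$ to conclude successively that $\hat{\nabla}X_1=0$ and $h(\hat{\nabla}_{Y_\alpha}Y_\beta,X_1)=0$; then \eqref{eqn:3.93} together with $\hat{\nabla}X_1=0$ and $\hat{\nabla}_{X_1}Y_\alpha=0$ shows by direct computation that $(\hat{\nabla}_WK)(U,V)=0$ for all frame vectors $W,U,V$, giving $\hat{\nabla}K=0$ as required.
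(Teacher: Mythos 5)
Your proof is correct and follows essentially the same route as the paper: fix the unit field $X_1$ along the $\mathbb{R}$-factor, derive $K_{X_1}X_1=\lambda X_1$, $K_{X_1}Y=\mu Y$ from Lemma~\ref{lm:3.10} and the symmetry of the cubic form, pin down $\lambda,\mu$ via the Gauss equation and apolarity, rule out tangential components $h(K_{Y_\alpha}Y_\beta,Y_\gamma)$ by the same Ejiri-maximum/Codazzi contradiction as in Lemma~\ref{lm:3.4}, and then read off $c_2$ and $\hat\nabla K=0$ exactly as in the paper. The only cosmetic difference is that you invoke total symmetry of the cubic form to get $K_{X_1}X_1\in\mathrm{span}\{X_1\}$ directly, whereas the paper cites the proof of \eqref{eqn:3.7}; both are equivalent one-line observations.
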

\begin{proof}
Around any point $p\in M^n=I\times M_2^{n-1}$, we take local unit
vector fields $X$ and $Y$, with $X(q)\in T_q\mathbb{R}$ and $Y(q)\in
T_qM_2^{n-1}$. Then, similar to the proof of \eqref{eqn:3.7}, and
applying Lemma \ref{lm:3.10}, we obtain
\begin{equation}\label{eqn:3.94}
K_{X}X=\lambda X,\ \ K_{X}Y=\mu Y.
\end{equation}

Moreover, by using \eqref{eqn:2.3} and the fact $\hat R(Y,X)X=0$, we
have
\begin{equation}\label{eqn:3.95}
\mu^2-\lambda\mu+H=0.
\end{equation}

On the other hand, by ${\rm trace}\,K_{X}=0$, we get
$\lambda+(n-1)\mu=0$. This together with \eqref{eqn:3.95} implies
that, if necessary replacing $X$ by $-X$,
\begin{equation}\label{eqn:3.96}
H\le0,\ \ \lambda=(n-1)\sqrt{-\tfrac{H}{n}},\ \
\mu=-\sqrt{-\tfrac{H}{n}}.
\end{equation}

Similar to the proof of Lemma \ref{lm:3.4}, we can also show that
$$
h(K_YY',Y'')=0,\ \ \forall\, Y,Y',Y''\in T_qM_2^{n_2}.
$$

Since $(M^n,h)=\mathbb{R}\times M_2^{n-1}(c_2)$, by Proposition 56
in p.89 of \cite {O}, we can take an orthonormal frame
$\{X_1,Y_1,\ldots,Y_{n-1}\}$ on $M^{n}$ with $X_1=X$, such that
\begin{equation}\label{eqn:3.95*}
\hat{\nabla}_{X_1}Y_\alpha=0,\ \ 1\le\alpha\le n-1.
\end{equation}
Then, w.r.t $\{X_1,Y_\alpha\}$, \eqref{eqn:3.93} immediately follows
from the preceding conclusions.

Next, using \eqref{eqn:3.93}, we can apply \eqref{eqn:2.3} and
\eqref{eqn:2.10}, with $X=Y_2$ and $Y=Z=Y_1$, to obtain that
$c_2=(n+1)H/n<0$.

Finally, similar to the proof of $\hat{\nabla}X_1=0$ in Lemma
\ref{lm:3.6}, by \eqref{eqn:2.4} and \eqref{eqn:3.93}, we can show
that $\hat{\nabla}X_1=0$. From this, together with \eqref{eqn:3.93}
and \eqref{eqn:3.95*}, we can show by direct calculations that
$\hat{\nabla}K=0$.
\end{proof}


\noindent{\it The Completion of Theorem \ref{thm:1.2}'s Proof.}

Under the assumptions of Theorem \ref{thm:1.2}, we can apply Lemma
\ref{lm:3.11}, then as a direct consequence of Theorem 4.1 in
\cite{HLSV} we easily get the assertion.\qed

\vskip 3mm

\noindent{\it Proof of Corollaries.}

Let $x:M^n\rightarrow\mathbb{R}^{n+1}$, with $n=3$ (resp. $n=4$), be
a locally strongly convex affine hypersphere whose Ricci tensor is
parallel with respect to the Levi-Civita of the affine metric. Then,
by the classical de Rham-Wu's decomposition theorem \cite{W},
$(M^n,h)$ is locally isometric to a Riemannian product of Einstein
manifolds.

If $n=3$, then either $(M^3,h)$ is Einstein and thus $M^3$ is of
constant sectional curvature, or $(M^3,h)$ is locally isometric to a
Riemannian product $\mathbb{R}\times \tilde M^2$, where $\tilde M^2$
is a Riemannian manifold with constant sectional curvature. For both
of these cases, according to \cite{VLS} and Theorem \ref{thm:1.2},
we obtain Corollary \ref{cor:1.1}.

If $n=4$, then either $(M^4,h)$ is Einstein, or $(M^4,h)$ is locally
isometric to a Riemannian product $\mathbb{R}\times\tilde M^3$, or
$(M^4,h)$ is locally isometric to a Riemannian product $M_1^2\times
M_2^2$, where $\tilde M^3$, $M_1^2$ and $M_2^2$ are Riemannian
manifolds with constant sectional curvature. Then, for each of these
three cases, applying the results of \cite{HLV3}, Theorem
\ref{thm:1.1} and Theorem \ref{thm:1.2}, we obtain Corollary
\ref{cor:1.2}. \qed

\vskip 6mm


\vskip 6mm

\begin{flushleft}

Xiuxiu Cheng and Zejun Hu:\\
{\sc School of Mathematics and Statistics, Zhengzhou University,\\
Zhengzhou 450001, People's Republic of China;\\
Henan Key Laboratory of Financial Engineering, Zhengzhou 450001, People's Republic of China.}\\
E-mails: chengxiuxiu1988@163.com; huzj@zzu.edu.cn.

\vskip 2mm

Marilena Moruz:\\
{\sc KU Leuven, Department of Mathematics, Celestijnenlaan 200B -- Box 2400, BE-3001 Leuven, Belgium.}\\
E-mail: marilena.moruz@kuleuven.be.

\vskip 2mm
Luc Vrancken:

{\sc Univ. Valenciennes, EA4015-LAMAV, F-59313 Valenciennes, FRANCE;\\
 KU Leuven, Department of Mathematics, Celestijnenlaan 200B -- Box 2400, BE-3001 Leuven, Belgium.}\\
E-mails: luc.vrancken@univ-valenciennes.fr.

\end{flushleft}

\end{document}